\tikzset{tab/.style={matrix of math nodes,column sep=-.35, row sep=-.35,text height=7pt,text width=7pt,align=center,inner sep=2,font=\footnotesize}}
\newcommand{\arxiv}[1]{\href{https://arxiv.org/abs/#1}{\texttt{arXiv:#1}}}
\newcommand{\inner}[2]{\left\langle #1, #2 \right\rangle}
\newcommand{\normord}[1]{: \mathrel{#1} :}  % normal ordering
\newcommand{\abs}[1]{\left\lvert #1 \right\rvert}
\newcommand{\bra}[1]{\langle #1 \rvert}
\newcommand{\ket}[1]{\lvert #1 \rangle}
\newcommand{\braket}[2]{\langle #1 | #2 \rangle}
\newcommand{\ds}{/\!\!/}  % double slash
\newcommand{\field}{\mathbf{k}}
\newcommand{\G}{G}
\newcommand{\dG}{g}
\newcommand{\wG}{J}
\newcommand{\dwG}{j}
\DeclareMathOperator{\id}{id} % identity
\newcommand{\mcA}{\mathcal{A}}
\newcommand{\mcD}{\mathcal{D}}
\newcommand{\mcE}{\mathcal{E}}
\newcommand{\mcF}{\mathcal{F}}
\newcommand{\mcH}{\mathcal{H}}
\newcommand{\mcI}{\mathcal{I}}
\newcommand{\mcP}{\mathcal{P}}
\newcommand{\vv}{\mathbf{v}}
\newcommand{\xx}{\mathbf{x}}
\newcommand{\yy}{\mathbf{y}}
\newcommand{\bal}{\boldsymbol{\alpha}}
\newcommand{\bbe}{\boldsymbol{\beta}}
\newcommand{\ZZ}{\mathbb{Z}}
\newcommand{\QQ}{\mathbb{Q}}
\newcommand{\CC}{\mathbb{C}}
\definecolor{darkred}{rgb}{0.7,0,0} % darkred color
\newcommand{\defn}[1]{{\color{darkred}\emph{#1}}} % emphasis of a definition
\definecolor{UQgold}{RGB}{196, 158, 54} % UQ gold
\definecolor{UQpurple}{RGB}{73, 7, 94} % UQ purple
\definecolor{UMNgold}{RGB}{255,200,46} % UMN gold
\definecolor{UMNmaroon}{RGB}{106,0,50} % UMN maroon
\definecolor{OCUenji}{RGB}{153,0,51} % OCU enji
\definecolor{OCUsapphire}{RGB}{0,51,102} % OCU sapphire
\definecolor{TUblue}{RGB}{0,77,255} % Tokai blue
\lstdefinelanguage{Sage}[]{Python}
{morekeywords={False,sage,True},sensitive=true}
\definecolor{dblackcolor}{rgb}{0.0,0.0,0.0}
\definecolor{dbluecolor}{rgb}{0.01,0.02,0.7}
\definecolor{dgreencolor}{rgb}{0.2,0.4,0.0}
\definecolor{dgraycolor}{rgb}{0.30,0.3,0.30}
\theoremstyle{plain}
\newtheorem{thm}{Theorem}[section]
\newtheorem{lemma}[thm]{Lemma}
\newtheorem{prop}[thm]{Proposition}
\newtheorem{cor}[thm]{Corollary}
\theoremstyle{definition}
\newtheorem{dfn}[thm]{Definition}
\newtheorem{ex}[thm]{Example}
\newtheorem{remark}[thm]{Remark}
\numberwithin{equation}{section}
\begin{document}
\title[Canonical free-fermions]{Free-fermions and canonical Grothendieck polynomials}

\author[S.~Iwao]{Shinsuke Iwao}
\address[S.~Iwao]{Faculty of Business and Commerce, Keio University, Hiyosi 4--1--1, Kohoku-ku, Yokohama-si, Kanagawa 223-8521, Japan}
\email{iwao-s@keio.jp}
%\urladdr{https://researchmap.jp/siwao}
%ORCID 0000-0002-6847-7433

\author[K.~Motegi]{Kohei Motegi}
\address[K.~Motegi]{Faculty of Marine Technology, Tokyo University of Marine Science and Technology, Etchujima 2--1--6, Koto-Ku, Tokyo, 135-8533, Japan}
\email{kmoteg0@kaiyodai.ac.jp}
\urladdr{https://sites.google.com/site/motegikohei/home}

\author[T.~Scrimshaw]{Travis Scrimshaw}
\address[T.~Scrimshaw]{Faculty of Science, Hokkaido University, 5 Ch\=ome Kita 8 J\=onishi, Kita Ward, Sapporo, Hokkaid\=o 060-0808}
\email{tcscrims@gmail.com}
\urladdr{https://tscrim.github.io/}

\keywords{Grothendieck polynomial, free-fermion}
\subjclass[2010]{05E05, 82B23, 14M15, 05A19}
% 05E05: Symmetric functions and generalizations
% 82B23: Exactly solvable models; Bethe ansatz
% 14M15: Grassmannians, Schubert varieties, flag manifolds
% 05A19: Combinatorial identities, bijective combinatorics
% 60B20: Random matrices (probabilistic aspects)

\thanks{
S.I.~was partially supported by Grant-in-Aid for Scientific Research (C) 19K03605.
K.M.~was partially supported by Grant-in-Aid for Scientific Research (C) 21K03176, 20K03793.
T.S.~was partially supported by Grant-in-Aid for JSPS Fellows 21F51028.
}

\begin{abstract}
We give a presentation of refined (dual) canonical Grothendieck polynomials and their skew versions using free-fermions.
Using this, we derive a number of identities, including the skew Cauchy identities, branching rules, expansion formulas, and integral formulas.
\end{abstract}

\maketitle
\tableofcontents

%=====================================================================
\section{Introduction}
\label{sec:introduction}

The (symmetric) Grothendieck functions $G_{\lambda}(\xx; \beta)$, where $\lambda$ is a partition inside a $k \times (n-k)$ rectangle, are symmetric functions used to study the K-theory of the Grassmannian, the set of $k$-dimensional subspaces in $\CC^n$, arising from the work of Lascoux and Sch\"utzenberger~\cite{LS82,LS83} with the $\beta$ parameter introduced by Fomin and Kirillov~\cite{FK94}.
(Strictly speaking, they live in a completion of the ring of symmetric functions, but this distinction is insignificant for our purposes.)
Many formulas are known for Grothendieck functions, such as a ratio of alternates~\cite[Eq.~(1.8)]{LS83} (see also~\cite[Eq.~(2.3)]{Lenart00}) and a sum over combinatorial objects~\cite{Buch02}.
An important property is that $\G_{\lambda}(\xx; \beta)$ is equal to $s_{\lambda}(\xx)$, the classical Schur function, plus higher degree terms, which was first given explicitly by Lenart~\cite{Lenart00}.
As such, they form a basis for (a completion of) $\Lambda$ and we can form the dual basis $\{\dG_{\lambda}(\xx; \beta)\}_{\lambda}$ under the (continuously extended) Hall inner product
$\inner{s_{\lambda}}{s_{\mu}} = \delta_{\lambda\mu}$.
We can also apply the involution $\omega$, which acts by $\omega s_{\lambda} = s_{\lambda'}$ with $\lambda'$ the conjugate shape, to each basis, which produces new bases that are called the weak versions.
These other bases were first studied by Lam and Pylyavskyy~\cite{LP08}, where they were given combinatorial descriptions.

Galashin, Grinberg, and Liu~\cite{GGL16} refined the parameter $\beta$ into a sequence of parameters $\bbe = (\beta_1, \beta_2, \ldots)$ that record additional combinatorial information for the dual Grothendieck functions.
The refined dual Grothendieck functions $\dG_{\lambda/\mu}(\xx; \bbe)$ have been used to describe properties of a last-passage percolation model in probability theory~\cite{MS20}, which is a refinement of~\cite{Yel20}.
The refined version of Grothendieck functions $\G_{\lambda/\mu}(\xx; \bbe)$ were introduced in~\cite{CP21} with applications to Brill--Noether varieties from algebraic geometry.
In a separate direction, Yeliussizov introduced the canonical Grothendieck functions $\G_{\lambda}(\xx; \alpha, \beta)$ in~\cite{Yel17} being inspired by canonical bases for Hecke algebras (more commonly known as Kazhdan--Lusztig bases) using $\omega$ as the defining involution.
These specialize to the usual Grothendiecks at $\alpha = 0$ and the weak Grothendiecks at $\beta = 0$, and similarly for the dual versions.
These generalizations were combined into the refined canonical Grothendieck functions $\G_{\lambda}(\xx; \bal, \bbe)$ and their dual version $\dG_{\lambda}(\xx; \bal, \bbe)$ by Hwang \textit{et al.}~\cite{HJKSS21} (they go even further and define flagged versions).
They proved Jacobi--Trudi formulas that specialize to other known formulas~\cite{AY20,Kim20II,Kim20,MS20,Yel17} and the refined version of Yeliussizov's symmetry
\begin{equation}
\label{eq:symmetries}
\omega \G_{\lambda/\mu}(\xx, \bal, \bbe) = \G_{\lambda'/\mu'}(\xx; \bbe, \bal),
\qquad\qquad
\omega \dG_{\lambda/\mu}(\xx, \bal, \bbe) = \dG_{\lambda'/\mu'}(\xx; \bbe, \bal).
\end{equation}
For simplicity, from this point onward, we will drop the adjective ``refined'' in the nomenclature.

Our main result (Theorem~\ref{thm:jacobi_trudi}, Corollary~\ref{cor:free_fermion_grothendiecks}) in this paper is a description of the canonial Grothendieck functions and their duals using the boson-fermion correspondence (see, \textit{e.g.},~\cite{AZ13,DKJM83,kac90,KKR13}).
In more detail, a certain infinite wedge space known as (fermionic) Fock space $\mcF$ has an action of the infinite dimensional Heisenberg algebra (a bosonic action) that we use to construct half-vertex operators $e^{H(\xx)}$ in terms of a Hamiltonian using the powersum symmetric functions.
However, Fock space $\mcF$ is also a Clifford algebra representation (which is where the fermionic name comes from), so we can express our half-vertex operators as an action of the Clifford algebra.
This correspondence allows us to write determinant formulas for the pairing of certain vectors and half-vertex operators by Wick's theorem.
We define a new basis $\ket{\lambda}^{[\bal,\bbe]}$ of a certain subspace $\mcF^0$ of Fock space and its dual basis ${}^{[\bal,\bbe]} \bra{\lambda}$ (under the natural pairing) in Theorem~\ref{thm:dual_basis} by generalizing the first author's previous work~\cite{Iwao19,Iwao20,Iwao23}.
By the natural Clifford algebra involution, we also have another orthonormal basis $\ket{\lambda}_{[\bal,\bbe]}$ and its dual ${}_{[\bal,\bbe]} \bra{\lambda}$.
Then taking the Jacobi--Trudi formulas of~\cite{HJKSS21} as our \emph{definition} of the skew (dual) canonical Grothendieck functions, we show the matrix elements
\[
\G_{\lambda\ds\mu}(\xx; \bal, \bbe) = {}^{[\bal,\bbe]} \bra{\mu} e^{H(\xx)} \ket{\lambda}^{[\bal,\bbe]},
\]
where $G_{\lambda\ds\mu}(\xx; \bal, \bbe)$ is the refined version of the corresponding functions of~\cite{Buch02,Yel17}, and
\[
\dG_{\lambda/\mu}(\xx; \bal, \bbe) = {}_{[\bal,\bbe]} \bra{\mu} e^{H(\xx)} \ket{\lambda}_{[\bal,\bbe]}
\]
(Theorem~\ref{thm:jacobi_trudi}, Corollary~\ref{cor:free_fermion_grothendiecks}).
We also introduce modified vectors ${}^{[[\bal,\bbe]]}\bra{\mu}$ so that
\[
\G_{\lambda/\mu}(\xx; \bal, \bbe) = {}^{[[\bal,\bbe]]} \bra{\mu} e^{H(\xx)} \ket{\lambda}^{[\bal,\bbe]}.
\]

Using our free-fermion description, we obtain a number of additional results.
We give a new simple proof the symmetries~\eqref{eq:symmetries} in Theorem~\ref{thm:canonical_conjugate} and the Schur expansion formulas for $\dG_{\lambda/\mu}(\xx; \bal, \bbe)$ (Theorem~\ref{thm:matrix_elements}) from~\cite[Thm.~8.7,~8.8]{HJKSS21}.
We also have expansion formulas for $\G_{\lambda/\mu}(\xx; \bal, \bbe)$, which are similar to but slightly different from those in~\cite[Thm.~8.2, 8.3]{HJKSS21}, and $\G_{\lambda\ds\mu}(\xx; \bal, \bbe)$.
A number of other results are generalized, such as branching rules~\cite[Prop.~8.7,8.8]{Yel17} (Proposition~\ref{prop:branching_rules}), the skew Cauchy identities~\cite[Thm.~5.1,Cor.~6.3]{Yel19}
 (Theorem~\ref{thm:skew_cauchy}),
the skew Pieri-type identities~\cite[Thm.~7.10]{Yel19} (Theorem \ref{thm: pieri-formulas}),
the formula in~\cite[Rem.~2.8]{HIMN17} (Proposition~\ref{prop:determinant_formula_for_Gs}), and integral formulas in~\cite[Prop.~4.28]{MS20} (Theorems~\ref{thm:integral_G} and~\ref{thm:integral_Gconj}).
We show determinant formulas for the expansions of $\G_{\lambda}(\xx; \bal, \bbe)$ into $\G_{\mu}(\xx; 0, \bbe)$ and similarly for the dual and give combinatorial descriptions of the coefficients, which allows us to answer~\cite[Prob.~12.2]{Yel17} in the negative (Section~\ref{sec:one_parameter_decomposition}).
Our last result is a free-fermionic presentation for a special case of the flagged canonical Grothendieck functions in Proposition~\ref{prop:flagged_fermions} with a flagged version of Proposition~\ref{prop:determinant_formula_for_Gs}, which is a canonical version of~\cite[Thm.~4]{Matsumura19}.

Let us briefly digress to discuss vertex models as there is a well-known relationship with free-fermions; see, \textit{e.g.},~\cite{Hardt21} and references therein.
There is a vertex model known for Grothendieck functions~\cite{MS13,MS14,WZJ16,ZinnJustin09}.
However, this lattice model is not at the free-fermion point (a condition on the weights), so we cannot go between the two descriptions.
This extends to the recent work of Gunna and Zinn-Justin~\cite{GZJ23}, where they gave a vertex model for canonical Grothendieck functions, and we cannot establish a direct relationship with our results.

This is also the first in a series of papers where we study the relationship between Grothendieck functions and stochastic processes.
In our next paper~\cite{IMSprob}, we will use our free-fermion presentation to study the four variants of the totally asymmetric simple exclusion process (TASEP) studied by Dieker and Warren in~\cite{DW08}.
Indeed, we can already see that appropriate specializations of the Jacobi--Trudi formulas are precisely, up to a simple overall factor, the transition kernels in~\cite{DW08}, which was first noticed in~\cite{MS20}.
In particular, we will extend the noncommutative operators given in~\cite{Iwao19,Iwao20} to the refined settings and show these encode the dynamics of the particle motions.
In~\cite{IMScomb}, we will show that we can recover the combinatorial description of canonical Grothendieck functions and their duals by using branching rules to reduce the computation to a single variable, which requires more technical analysis.

This paper is organized as follows.
In Section~\ref{sec:background}, we give some background on supersymmetric functions and the boson-fermion correspondence.
In Section~\ref{sec:canonical_fermions}, we describe new vectors in fermionic Fock space and prove a number of properties.
In Section~\ref{sec:results}, we prove our main results and identities.
In Section~\ref{sec:flagged}, we give a free-fermionic presentation of a special case of the flagged canonical Grothendieck functions.

%%%%%%%%%%%%%%%%%%%%
\subsection*{Acknowledgements}

The authors thank Darij Grinberg and Jang Soo Kim for useful conversions.
The authors thank Ole Warnaar on the behalf of Alain Lascoux for posthumous comments on an earlier draft of this paper.
The authors thank the referees for their useful comments.

This work benefited from computations using {\sc SageMath}~\cite{sage,combinat}.
This work was partly supported by Osaka City University Advanced Mathematical Institute (MEXT Joint Usage/Research Center on Mathematics and Theoretical Physics JPMXP0619217849).
This work was supported by the Research Institute for Mathematical Sciences, an International Joint Usage/Research Center located in Kyoto University.

%=====================================================================
\section{Background}
\label{sec:background}

Let $\lambda = (\lambda_1, \lambda_2, \dotsc, \lambda_{\ell})$ be a \defn{partition}, a weakly decreasing finite sequence of positive integers.
We denote the set of all partitions by $\mcP$.
We draw the Young diagrams of our partitions using English convention.
We will often extend partitions with additional entries at the end being $0$, and let $\ell(\lambda)$ denote the largest index $\ell$ such that $\lambda_{\ell} > 0$.
Let $\lambda'$ denote the conjugate partition.
We often write our partitions as words.
A \defn{hook} is a partition $\lambda$ of the form $a1^{m} = (a, 1, \dotsc, 1)$ with $1$ appears $m$ times, where the \defn{arm} is $a-1$ and the \defn{leg} is $m$.

Let $\xx = (x_1, x_2, \ldots)$ denote a countably infinite sequence of indeterminates.
We will often set all but finitely many of the indeterminates $\xx$ to $0$, which we denote as $\xx_n := (x_1, \dotsc, x_n, 0, 0, \ldots)$.
We make similar definitions for another sequence of indeterminates $\yy = (y_1, y_2, \ldots)$.
We also require infinite sequences of parameters $\bal = (\alpha_1, \alpha_2, \ldots)$, and $\bbe = (\beta_1, \beta_2, \ldots)$, which we often treat as indeterminates.

%%%%%%%%%%%%%%%%%%%%
\subsection{Supersymmetric functions}

We set some additional standard notation from symmetric function theory.
Let
\[
e_m(\xx) = \sum_{i_1 < \cdots < i_m} x_{i_1} \dotsm x_{i_m},
\qquad
h_m(\xx) = \sum_{i_1 \leq \cdots \leq i_m} x_{i_1} \dotsm x_{i_m},
\qquad p_m(\xx) = \sum_{i=1}^{\infty} x_i^m,
\]
denote the elementary, homogeneous, and power sum, respectively.
For $\lambda\in \mathcal{P}$, we set
$e_{\lambda}(\xx) = e_{\lambda_1} \cdots e_{\lambda_{\ell}}$,
$h_{\lambda}(\xx) = h_{\lambda_1} \cdots h_{\lambda_{\ell}}$, and 
$p_{\lambda}(\xx) = p_{\lambda_1} \cdots p_{\lambda_{\ell}}$.
%\begin{align*}
%e_m(\xx) & = \sum_{i_1 < \cdots < i_m} x_{i_1} \dotsm x_{i_m},
%& e_{\lambda}(\xx) & = e_{\lambda_1} \cdots e_{\lambda_{\ell}},
%\\ h_m(\xx) & = \sum_{i_1 \leq \cdots \leq i_m} x_{i_1} \dotsm x_{i_m},
%& h_{\lambda}(\xx) & = h_{\lambda_1} \cdots h_{\lambda_{\ell}},
%\\ p_m(\xx) & = \sum_{i=1}^{\infty} x_i^m,
%& p_{\lambda}(\xx) & = p_{\lambda_1} \cdots p_{\lambda_{\ell}},
%\end{align*}
%Let $\field$ be a field of characteristic $0$, and 
Let $\Lambda_{\QQ}$ be the algebra of symmetric functions over $\QQ$. It is known that
\[
\Lambda_{\QQ}=\QQ[h_1(\xx),h_2(\xx),\dots]=\QQ[e_1(\xx),e_2(\xx),\dots]=\QQ[p_1(\xx),p_2(\xx),\dots].
\]
%Note that $\omega p_m(\xx) = (-1)^{m-1} p_m(\xx) = -p_m(-\xx)$ for $m > 0$.
%Since, the power sums generate the ring of symmetric functions as a polynomial ring (over $\QQ$) and the monomials (in the variables $p_m(\xx)$) form a basis.
We can define the polynomials $E_{\lambda}(p_1, p_2, \ldots)$, $H_{\lambda}(p_1, p_2, \ldots)$, $S_{\lambda}(p_1, p_2, \ldots)$ with coefficients in $\QQ$ by the equations
\[ %\begin{equation}
%\label{eq:expansion_to_power_sum}
E_{\mu}(p_1(\xx), p_2(\xx), \ldots) = e_{\mu}(\xx),
\qquad
H_{\mu}(p_1(\xx), p_2(\xx), \ldots) = h_{\mu}(\xx),
\qquad
S_{\mu}(p_1(\xx), p_2(\xx), \ldots) = s_{\mu}(\xx).
\] %\end{equation}

Now we recall some particular \defn{supersymmetric functions}; we refer the reader to~\cite[Ch.~I]{MacdonaldBook} for more details.
We define the supersymmetric elementary, homogeneous, power sum, and Schur functions as
\begin{align*}
e_m(\xx/\yy)  = \sum_{k=0}^m (-1)^{m-k} e_k(\xx) h_{m-k}(\yy),\qquad
h_m(\xx/\yy)  = \sum_{k=0}^m (-1)^{m-k} h_k(\xx) e_{m-k}(\yy),
\\ p_m(\xx/\yy)  = p_m(\xx) - p_m(\yy),\qquad
 s_{\lambda}(\xx/\yy)  = \sum_{\mu} (-1)^{\abs{\lambda} - \abs{\mu}} s_{\mu}(\xx) s_{\lambda' / \mu'}(\yy).
\end{align*}
When $\yy = 0$, that is we have set all of the $\yy$ indeterminates to $0$, we have $f(\xx / \yy) = f(\xx)$ for any supersymmetric function $f$.

From~\cite[Sec.~I.5.Ex.~23]{MacdonaldBook}, we have $s_{\lambda/\mu}(\xx/\yy) = (-1)^{\abs{\lambda/\mu}} s_{\lambda'/\mu'}(\yy/\xx) = s_{\lambda'/\mu'}({-\yy}/{-\xx})$, and we can consider the supersymmetric Schur functions as the sum over bitableaux of shape $\lambda$, which are fillings of $\lambda$ by elements in the totally ordered set
\[
\{1 < 2 < 3 < \cdots < 1' < 2' < 3' < \cdots\}
\]
such that rows and columns weakly increase and no primed (resp.~unprimed) letter is repeated in the same row (resp.~column).
Therefore, we have
\[
s_{\lambda}(\xx/\yy) = \sum_T x_1^{T_1} x_2^{T_2} \cdots (-y_1)^{T_{1'}} (-y_2)^{T_{2'}} \cdots,
\]
and have a natural definition of the supersymmetric skew Schur function $s_{\lambda/\mu}(\xx/\yy)$ analogous to the skew Schur function $s_{\lambda}(\xx)$.

The supersymmetric functions can also be described in terms of plethystic substitution.
While we will not give a detailed account, we will briefly review the relevant descriptions for understanding the results in~\cite{HJKSS21} and refer the reader to~\cite{LR11} and~\cite[Ch.~I]{MacdonaldBook} for a more detailed description.
Let $X = x_1 + x_2 + \cdots$ and $Y = y_1 + y_2 + \cdots$.
For a symmetric function $f$, we define $f[X] = f(x_1, x_2, \ldots)$, and if $Z = z_1 + z_2 + \cdots + z_n$, then we have $f[Z] = f(z_1, z_2, \dotsc, z_n, 0, 0, \ldots)$.
We also can define
\begin{align*}
h_m[X - Y] = h_m(\xx/\yy),
\qquad\qquad
e_m[X - Y] = e_m(\xx/\yy),
\qquad\qquad
p_m[X - Y] = p_m(\xx/\yy).
\end{align*}
As a consequence, we have that $h_m[-Y] = (-1)^m e_m(\yy)$ and $e_m[-Y] = (-1)^m h_m(\yy)$.
Furthermore, we have
\begin{align*}
h_m\bigl((\xx \sqcup \xx')/(\yy \sqcup \yy')\bigr) & = h_m[X + X' - Y - Y'] = \sum_{a+b=m} h_a(\xx/\yy) h_b(\xx'/\yy'),
\\
e_m\bigl((\xx \sqcup \xx')/(\yy \sqcup \yy')\bigr) & = e_m[X + X' - Y - Y'] = \sum_{a+b=m} e_a(\xx/\yy) e_b(\xx'/\yy'),
\end{align*}
which are well-known identities (see, \textit{e.g.},~\cite[Prop.~2.1]{HJKSS21}).
Next, we recall the notation given in~\cite[Def.~2.4]{HJKSS21}:
\[
h_m[X \ominus Y] := \sum_{a-b=m} h_a[X] h_b[Y],
\qquad\qquad
e_m[X \ominus Y] := \sum_{a-b=m} e_a[X] e_b[Y].
\]
We note that these can have infinite nonzero terms and be nonzero even when $m$ is negative.

In order to avoid confusion with the plethystic negative and negating the variables, we will not use plethystic notation, and instead write $h_m(\xx \ds \yy) := h_m[X \ominus Y]$ and $e_m(\xx \ds \yy) := e_m[X \ominus Y]$.

%%%%%%%%%%%%%%%%%%%%
\subsection{Free-fermions and Wick's theorem}

We describe free-fermions %, the boson-fermion correspondence, 
and Wick's theorem here.

Let $\field$ be a field of characteristic $0$.
The ($\field$-)algebra of \defn{free-fermions} $\mcA$ is the associative unital $\field$-algebra generated by $\{\psi_n, \psi_n^* \mid n \in \ZZ\}$ subject to the anti-commuting relations
\[
\psi_m \psi_n + \psi_n \psi_m = \psi_m^* \psi_n^* + \psi_n^* \psi_m^* = 0,
\qquad\qquad
\psi_m \psi_n^* + \psi_n^* \psi_m = \delta_{m,n}.
\]
This is the Clifford algebra that often appears in mathematical physics for the vector space with a basis indexed by $\ZZ \sqcup \ZZ$.
There exists an anti-algebra involution on $\mcA$ defined by $\psi_n \leftrightarrow \psi_n^\ast$ satisfying $(xy)^\ast=y^\ast x^\ast$ for any $x,y\in \mathcal{A}$.
%, which we will denote by $\ast$ and have $(\psi_n^*)^* = \psi_n^{**} = \psi_n$.

We define \defn{(fermionic) Fock space} $\mcF$ as the subspace of $\bigwedge^{\infty} V$, where $V = \bigoplus_{i \in \ZZ} \field v_i$, with the basis
\[
\{ v_{i_1} \wedge v_{i_2} \wedge \cdots \mid i_1 > i_2 > \cdots, \; i_k = -k+m \text{ for sufficiently large $k$ and some } m \in \ZZ \}.
\]
We make $\mcF$ into a left $\mcA$-representation generated by the \defn{vacuum vector}
\[
\ket{0} = v_{-1} \wedge v_{-2} \wedge v_{-3} \wedge \cdots
\]
with the action of $\mcA$ given by
\begin{align*}
\psi_n(v_{i_1} \wedge v_{i_2} \wedge \cdots) & = v_n \wedge v_{i_1} \wedge v_{i_2} \wedge \cdots,
\\ \psi_n^*(v_{i_1} \wedge v_{i_2} \wedge \cdots) & = \begin{cases} (-1)^{k-1} v_{i_1} \wedge \cdots \wedge v_{i_{k-1}} \wedge v_{i_{k+1}} \wedge \cdots & \text{if there exists $k$ such that $i_k = n$}, \\ 0 & \text{otherwise,} \end{cases}
\end{align*}
and extended linearly.
We can see that $\mcF$ satisfies the relations
\[
\psi_n \ket{0} = \psi_m^* \ket{0} = 0, \qquad\qquad n < 0, \quad m \geq 0,
\]
and the basis can be described by the vectors
\[
\psi_{n_1} \psi_{n_2} \cdots \psi_{n_r} \psi_{m_1}^* \psi_{m_2}^* \cdots \psi_{m_s}^* \ket{0},
\qquad
(r,s \geq 0, n_1 > \cdots n_r \geq 0 > m_s > \cdots > m_1).
\]
We refer the reader to~\cite[Sec.~5]{KRR13} and~\cite[Sec.~4]{MJD00} for more information.

In the language of Clifford algebras, Fock space is a spinor representation of $\mcA$.
The basis elements of Fock space corresponds to a single (semi-infinite) wedge product of basis vectors $v_{i_1} \wedge v_{i_2} \wedge \cdots$.
We can consider such basis elements as being a state in a system of particles on a line with particles at positions $i_1$, $i_2$, and so on.
Thus, the vacuum vector corresponds to the state where all of the particles are to the left of the position $-1/2$, which is called the step initial condition in the asymmetric simple exclusion process (ASEP) or the Dirac sea in the mathematical physics literature.
%Because of this perspective, the elements $\psi_n$ (resp.~$\psi_n^*$) are referred to as \defn{creation} (resp.~\defn{annihilation}) \defn{operators}.

We can similarly define the dual Fock space $\mcF^*$ as the right $\mcA$-representation generated by the (dual) vacuum vector
\[
\bra{0} = \cdots \wedge v_2 \wedge v_1 \wedge v_0
\]
with the action of $\mcA$ given by
\begin{align*}
\psi_n(\cdots \wedge v_{i_2} \wedge v_{i_1}) & = \begin{cases} (-1)^{k-1}  \cdots \wedge v_{i_{k+1}} \wedge v_{i_{k-1}} \wedge \cdots \wedge v_{i_1} \! & \text{if there exists $k$ such that $i_k = n - 1$}, \\ 0 & \text{otherwise,} \end{cases}
\\ \psi_n^*(\cdots \wedge v_{i_2} \wedge v_{i_1}) & = \cdots \wedge v_{i_2} \wedge v_{i_1} \wedge v_{n-1}.
\end{align*}
We can see that the roles of $\psi_j$ and $\psi_j^*$ have been reversed compared to $\mcF$ (albeit with the indices trivially shifted by one for their action on the wedge space).
Therefore, dual Fock space $\mcF^*$ satisfies the relations
\[
\bra{0} \psi_m = \bra{0} \psi_n^* = 0, \qquad\qquad n < 0, \quad m \geq 0,
\]
has a basis given by
\[
\bra{0} \psi_{m_s} \cdots \psi_{m_2} \psi_{m_1} \psi_{n_r}^* \cdots \psi_{n_2}^* \psi_{n_1}^*,
\qquad
(r,s \geq 0, n_1 > \cdots n_r \geq 0 > m_s > \cdots > m_1),
\]
and has an isomorphism $\ast \colon \mcF \to \mcF^*$ given by $X \ket{0} \mapsto \bra{0} X^*$.
This isomorphism encodes the particle-hole duality of Fock space, as the role the indices $i_j$ give the positions of the locations without a particle.
We will also use the \defn{shifted vacuum vectors}
\[
\ket{m} = \begin{cases} \psi_{m-1} \dotsm \psi_0 \ket{0} & \text{if } m \geq 0, \\ \psi_m^* \dotsm \psi_{-1}^* \ket{0} & \text{if } m < 0, \end{cases}
\qquad\qquad
\bra{m} = \begin{cases} \bra{0} \psi_0^* \dotsm \psi_{m-1}^* & \text{if } m \geq 0, \\ \bra{0} \psi_{-1} \dotsm \psi_m & \text{if } m < 0. \end{cases}
\]

The \defn{vacuum expectation value} is the unique $\field$-bilinear map
\[
\mcF^* \otimes_{\field} \mcF \to \field,
\qquad
\bra{w} \otimes_{\field} \ket{v}
\mapsto
\braket{w}{v}
\quad
\text{ satisfying }
\quad
\braket{0}{0} = 1,
\qquad
(\bra{w} X) \ket{v} = \bra{w} (X \ket {v}),
\]
for all $X \in \mcA$.
Note the relation $\bra{w} X \ket {v} = \bra{v^\ast} X^* \ket{w^\ast}$, where $\ket{v^\ast}=(\bra{v})^\ast$ and $\bra{w^\ast}=(\ket{w})^\ast$.
%which we will say that we have applied $\ast$ to $\bra{w} X \ket {v}$.
We use the abbreviation $\langle X \rangle = \bra{0} X \ket{0}$.
To compute vacuum expectation values, we use \defn{Wick's theorem}.

\begin{thm}[Wick's theorem {\cite{MJD00}}]\label{thm:Wick}
For any finite subsets
$
\{m_1, \dotsc, m_r\}, \{n_1, \dotsc, n_r\} \subseteq \ZZ
$, we have
\[
\langle \psi_{m_r} \dotsm \psi_{m_1} \psi_{n_1}^* \dotsm \psi_{n_r}^* \rangle = \det \bigl[ \langle \psi_{m_i} \psi_{n_j}^* \rangle \bigr]_{i,j=1}^r.
\]
\end{thm}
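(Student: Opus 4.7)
The plan is to prove Wick's theorem by induction on $r$, using the Clifford anticommutation relations to peel off one fermion at a time and recognising the resulting recursion as the Laplace expansion of $\det[\langle\psi_{m_i}\psi_{n_j}^*\rangle]$ along its first row. The base cases $r=0$ (with $\langle 1\rangle=1=\det\emptyset$) and $r=1$ are immediate. A preparatory step is to compute the two-point function directly from $\psi_m\psi_n^*+\psi_n^*\psi_m=\delta_{m,n}$ together with the annihilation relations stated in the excerpt, obtaining
\[
\langle\psi_m\psi_n^*\rangle = \begin{cases} \delta_{m,n} & \text{if } m < 0, \\ 0 & \text{if } m \geq 0. \end{cases}
\]
For $m\geq 0$ the identity $\bra{0}\psi_m = 0$ kills the expectation; for $m<0$ the identity $\psi_m\ket{0}=0$ reduces $\psi_m\psi_n^*\ket{0}$ to $\delta_{m,n}\ket{0}$ via the anticommutator.

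For the inductive step with $r\geq 2$, I would move the innermost fermion $\psi_{m_1}$ rightward through all of the $\psi_{n_j}^*$'s using $\psi_{m_1}\psi_{n_j}^*=\delta_{m_1,n_j}-\psi_{n_j}^*\psi_{m_1}$. Iterating produces
\[
\psi_{m_1}\psi_{n_1}^*\cdots\psi_{n_r}^* = \sum_{j=1}^r(-1)^{j-1}\delta_{m_1,n_j}\,\psi_{n_1}^*\cdots\widehat{\psi_{n_j}^*}\cdots\psi_{n_r}^* + (-1)^r\psi_{n_1}^*\cdots\psi_{n_r}^*\psi_{m_1}.
\]
Sandwiching with $\bra{0}\psi_{m_r}\cdots\psi_{m_2}$ and $\ket{0}$, the first sum becomes a combination of $(r-1)$-point expectations that by induction are minors of $[\langle\psi_{m_i}\psi_{n_j}^*\rangle]$. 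When $m_1<0$, the coefficient $\delta_{m_1,n_j}$ equals $\langle\psi_{m_1}\psi_{n_j}^*\rangle$ and the residual term vanishes via $\psi_{m_1}\ket{0}=0$, giving exactly the Laplace expansion of $\det[\langle\psi_{m_i}\psi_{n_j}^*\rangle]$ along its first row.

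The main obstacle is the complementary case $m_1\geq 0$, in which $\langle\psi_{m_1}\psi_{n_j}^*\rangle=0$ for every $j$, so the target determinant has a zero first row and the identity reads $0=0$; here the naive right-reduction is not enough, since individual $\delta_{m_1,n_j}$ may be nonzero and the residual is not immediately annihilated by $\ket{0}$. To verify independently that the left-hand side vanishes, I would argue as follows. If $m_1$ coincides with any $m_i$ for $i\geq 2$, then the left-hand side is zero by $\psi_{m_1}^2=0$. Otherwise, the $m_i$ are distinct and repeated use of $\psi_{m_i}\psi_{m_1}=-\psi_{m_1}\psi_{m_i}$ gives $\psi_{m_r}\cdots\psi_{m_2}\psi_{m_1} = (-1)^{r-1}\psi_{m_1}\psi_{m_r}\cdots\psi_{m_2}$, hence
\[
\bra{0}\psi_{m_r}\cdots\psi_{m_1}\psi_{n_1}^*\cdots\psi_{n_r}^*\ket{0} = (-1)^{r-1}\bra{0}\psi_{m_1}\psi_{m_r}\cdots\psi_{m_2}\psi_{n_1}^*\cdots\psi_{n_r}^*\ket{0} = 0,
\]
the final vanishing by $\bra{0}\psi_{m_1} = 0$. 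With both cases settled, the Laplace-expansion identification closes the induction. The case split on the sign of $m_1$ is the only real subtlety; everything else is bookkeeping of signs from the anticommutator.
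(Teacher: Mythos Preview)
Your proof is correct. The paper does not supply its own proof of Wick's theorem; it is stated with a citation to~\cite{MJD00} and used as a black box throughout. Your argument is the standard one: compute the two-point function from the annihilation relations, then induct on $r$ by anticommuting $\psi_{m_1}$ through the $\psi_{n_j}^*$'s and recognising the Laplace expansion along the first row. The case split on the sign of $m_1$ is handled cleanly.

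One small remark: the hypothesis that $\{m_1,\dotsc,m_r\}$ is a \emph{subset} of $\ZZ$ already forces the $m_i$ to be pairwise distinct, so the sub-case ``$m_1=m_i$ for some $i\geq 2$'' in your treatment of $m_1\geq 0$ cannot occur and may be dropped. It does no harm to include it, and in fact your argument then proves the slightly stronger statement where the $m_i$ (and $n_j$) are allowed to repeat, with both sides vanishing trivially in that case.
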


Next, we define the \defn{current operators} as
\[
a_k := \sum_{i \in \ZZ} \normord{\psi_i \psi_{i+k}^*},
\]
where $\normord{\bullet}$ denotes the normal ordering of free-fermions, which only is important for when $k = 0$ (see, \textit{e.g.}~\cite[Sec.~2]{AZ13} and~\cite[Sec.~5.2]{MJD00}):
\[
\normord{\psi_i \psi_{i+k}^*} \; := \begin{cases}
\psi_i \psi_{i+k}^* & \text{if } i \leq 0, \\
-\psi_{i+k}^* \psi_i & \text{if } i > 0.
\end{cases}
\]
We have $a_k^* = a_{-k}$.
The current operators define the infinite dimensional Heisenberg algebra, which means they satisfy the relations
\begin{equation}
\label{eq:boson_relation}
[a_m, a_k] = m \delta_{m,-k}.
\end{equation}
They also satisfy the additional relations
\[
[a_m, \psi_k] = \psi_{k-m},
\qquad\qquad
[a_m, \psi_k^*] = -\psi_{k+m}^*.
\]
(See, \textit{e.g.},~\cite[Sec.~5.3]{MJD00} for proofs of these relations.)

Next, we define the \defn{Hamiltonian operator} and its supersymmetric analog
\[
H(\xx) := \sum_{k>0} \frac{p_k(\xx)}{k} a_k,
\qquad\qquad
H(\xx/\yy) := \sum_{k>0} \frac{p_k(\xx/\yy)}{k} a_k = H(\xx) - H(\yy).
\]
Note that $-H(\xx/\yy) = H(\yy/\xx)$.
It can be seen the corresponding half-vertex operators satisfy the relations~\cite[Eq.~(17), Eq.~(18)]{Iwao23}
\begin{subequations}
\label{eq:eH_relations}
\begin{align}
\label{eq:eH_commute}
e^{H(\xx/\yy)} \psi_k e^{-H(\xx/\yy)}& = \sum_{i=0}^{\infty} h_i(\xx/\yy) \psi_{k-i},
\\
\label{eq:seH_commute}
e^{-H(\xx/\yy)} \psi^*_k e^{H(\xx/\yy)}& =  \sum_{i=0}^{\infty} h_i(\xx/\yy) \psi^*_{k+i},
\end{align}
\end{subequations}
which are finite if and only if $\xx = \emptyset$ and finitely many $\yy$ are nonzero,
For partitions $\lambda$ and $\mu$ such that $\ell(\lambda), \ell(\mu) \leq \ell$, we define vectors
\[
\bra{\mu} := \bra{-\ell} \psi^*_{\mu_{\ell}-\ell} \cdots \psi^*_{\mu_2-2} \psi^*_{\mu_1-1},
\qquad\qquad
\ket{\lambda} := \psi_{\lambda_1-1} \psi_{\lambda_2-2} \cdots \psi_{\lambda_{\ell}-\ell} \ket{-\ell}.
\]
We remark that
\[
\ket{\lambda, 0} = \ket{\lambda},
\qquad\qquad
\bra{\mu, 0} = \bra{\mu},
\]
where $\ket{\lambda,0}$ means $\ket{(\lambda_1,\dots,\lambda_\ell,0)}$ for $\lambda=(\lambda_1,\dots,\lambda_\ell)$,
and so we can unambiguously consider $\bra{\cdot} \colon \field[\mcP] \to \mcF^*$ and $\ket{\cdot} \colon \field[\mcP] \to \mcF$ as embeddings of algebras.
From Wick's theorem and~\eqref{eq:eH_commute}, we have the following well-known result of free-fermion description of Schur functions, presented here with a plethystic substitution (see, \textit{e.g.}~\cite[Thm.~2.3]{Iwao23}).

\begin{thm}
\label{thm:schur_jacobi_trudi}
Let $\lambda$ be a partition such that $\ell(\lambda) \leq \ell$ and $\mu \subseteq \lambda$.
Then we have
\[
s_{\lambda/\mu}(\xx/\yy) = \bra{\mu} e^{H(\xx/\yy)} \ket{\lambda} = \det \bigl[h_{\lambda_i-\mu_j-i+j}(\xx/\yy) \bigr]_{i,j=1}^{\ell}.
\]
\end{thm}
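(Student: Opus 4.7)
The plan is to push $e^{H(\xx/\yy)}$ to the right through the fermions defining $\ket{\lambda}$, reduce the resulting matrix element to a signed sum over $S_\ell$ using orthonormality of the basis $\{\ket{\nu}\}$, and identify that sum with the Jacobi--Trudi determinant.

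I would first verify that $e^{H(\xx/\yy)} \ket{-\ell} = \ket{-\ell}$: for $m > 0$, any nonzero term in $a_m = \sum_i \normord{\psi_i \psi^*_{i+m}}$ acting on $\ket{-\ell}$ would require simultaneously a particle at site $i + m$ (so $i + m \leq -\ell-1$) and a hole at site $i$ (so $i \geq -\ell$), which is impossible, hence $H(\xx/\yy) \ket{-\ell} = 0$. Iterating~\eqref{eq:eH_commute} then yields
\[
e^{H(\xx/\yy)} \ket{\lambda} = \sum_{k_1, \dots, k_\ell \geq 0} \prod_{i=1}^\ell h_{k_i}(\xx/\yy) \cdot \psi_{\lambda_1 - 1 - k_1} \cdots \psi_{\lambda_\ell - \ell - k_\ell} \ket{-\ell}.
\]
Writing $L_i := \lambda_i - i - k_i$, the product $\psi_{L_1} \cdots \psi_{L_\ell} \ket{-\ell}$ vanishes unless the $L_i$ are pairwise distinct and each at least $-\ell$, in which case anti-commuting them into strictly decreasing order produces $\operatorname{sgn}(\sigma) \ket{\nu}$, where $\sigma \in S_\ell$ is the sorting permutation and $\nu_j := L_{\sigma^{-1}(j)} + j$ is a partition of length at most $\ell$.

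Pairing with $\bra{\mu}$ and invoking orthonormality $\braket{\mu}{\nu} = \delta_{\mu\nu}$, each $\sigma$ contributes only through the unique configuration satisfying $L_{\sigma^{-1}(j)} = \mu_j - j$ for all $j$, equivalently $k_i = \lambda_i - \mu_{\sigma(i)} - i + \sigma(i)$; the nonnegativity requirement $k_i \geq 0$ is absorbed by the convention $h_k(\xx/\yy) = 0$ for $k < 0$. Summing over $\sigma \in S_\ell$ produces
\[
\bra{\mu} e^{H(\xx/\yy)} \ket{\lambda} = \sum_{\sigma \in S_\ell} \operatorname{sgn}(\sigma) \prod_{i=1}^\ell h_{\lambda_i - \mu_{\sigma(i)} - i + \sigma(i)}(\xx/\yy) = \det \bigl[ h_{\lambda_i - \mu_j - i + j}(\xx/\yy) \bigr]_{i,j=1}^\ell
\]
by the Leibniz formula. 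The identification of this determinant with $s_{\lambda/\mu}(\xx/\yy)$ is the supersymmetric Jacobi--Trudi identity, which follows from the classical Jacobi--Trudi formula via the plethystic substitution $h_m[X - Y] = h_m(\xx/\yy)$.

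The main bookkeeping issue will be tracking the sign produced by anti-commuting the $\psi$'s into sorted order and confirming that it matches the sign appearing in the Leibniz expansion of the determinant. As an alternative, one could set $\widetilde{\psi}_k := \sum_{i \geq 0} h_i(\xx/\yy) \psi_{k-i}$ after commuting $e^{H(\xx/\yy)}$ across the $\psi$'s, and then apply a shifted-vacuum version of Wick's theorem (Theorem~\ref{thm:Wick}) to the string $\bra{-\ell} \psi^*_{\mu_\ell - \ell} \cdots \psi^*_{\mu_1 - 1} \widetilde{\psi}_{\lambda_1 - 1} \cdots \widetilde{\psi}_{\lambda_\ell - \ell} \ket{-\ell}$ to produce the determinant of two-point functions in one stroke.
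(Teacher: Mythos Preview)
Your proof is correct and follows essentially the route the paper sketches: the paper does not give a full proof but simply says the result follows ``from Wick's theorem and~\eqref{eq:eH_commute}'' with a reference to~\cite[Thm.~2.3]{Iwao23}. Your primary argument (push $e^{H(\xx/\yy)}$ through the $\psi$'s via~\eqref{eq:eH_commute}, then use antisymmetry and orthonormality to extract the Leibniz sum) is exactly what Wick's theorem packages, and you even note this explicitly in your alternative approach at the end.
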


\begin{remark}
The subspace 
\[
\mcF^0=\{ v_{i_1} \wedge v_{i_2} \wedge \cdots \mid i_1 > i_2 > \cdots, \; i_k = -k\mbox{ for sufficiently large } k\}\subset \mcF
\]
is isomorphic to $\Lambda$ through the linear map $\mcF^0\to \Lambda$ that sends $\ket{v}$ to $\bra{0}e^{H(\xx)}\ket{v}$.
In particular, the set $\{\ket{\lambda}\}_\lambda$ forms a basis of $\mcF^0$ since $\{s_\lambda(\xx)\}_\lambda$ forms a basis of $\Lambda$.
Theorem~\ref{thm:schur_jacobi_trudi} implies that the linear map $\pi_{\mu}:\mcF^0\to\mcF^0$; $\ket{v}\mapsto \ket{\mu}\cdot (\braket{\mu}{v})$ forms a family of projections such that $\mathrm{id}_{\mcF^0}=\sum_{\mu\in \mathcal{P}}\pi_{\mu}$.
Symbolically, we will write $\mathrm{id}=\sum_{\mu}\ket{\mu}\cdot \bra{\mu}$.
\end{remark}

In the sequel, we will use generating functions of the fermionic operators
\[
\psi(z) = \sum_{n \in \ZZ} \psi_n z^n,
\qquad\qquad
\psi^*(z)=\sum_{n \in \ZZ} \psi_n^* z^n.
\]
Note that our convention for $\psi^*(z)$ might differ some some places in the literature by $z = w^{-1}$.

%%%%%%%%%%%%%%%%%%%%
\subsection{Dual and transpose Hamiltonian operators}

In the sequel, we will use other variations of the Hamiltonian operator.
We first define the \defn{(formal) dual Hamiltonian operator}
\[
H^*(\xx / \yy) := \bigl( H(\xx/\yy) \bigr)^* = \sum_{n>0} \frac{p_n(\xx / \yy)}{n} a_{-n}.
\]
Dual to the usual Hamiltonian operator, by applying $\ast$ to~\eqref{eq:eH_relations} we have
\begin{subequations}
\label{eq:eHstar_relations}
\begin{align}
e^{-H^*(\xx/\yy)} \psi^*_k e^{H^*(\xx/\yy)} & = \sum_{i=0}^{\infty} h_i(\xx/\yy) \psi^*_{k-i}, 
%= \psi^*_k + h_1(\yy/\xx) \psi^*_{k-1} + h_2(\yy/\xx) \psi^*_{k-2} + \cdots, 
\label{eq:seHstar_commute}
\\
%e^{-H^*(\xx/\yy)} \psi_n e^{H^*(\xx/\yy)} = \sum_{i=0}^{\infty} (-1)^i \psi_{n+i} e_i(\xx/\yy) = \psi_n - e_1(\xx/\yy) \psi_{n+1} + e_2(\xx/\yy) \psi_{n+2} + \cdots,
e^{H^*(\xx/\yy)} \psi_k e^{-H^*(\xx/\yy)} & = \sum_{i=0}^{\infty} h_i(\xx/\yy) \psi_{k+i}, 
%= \psi_k + h_1(\yy/\xx) \psi_{k+1} + h_2(\yy/\xx) \psi_{k+2} + \cdots, 
\label{eq:eHstar_commute}
\end{align}
\end{subequations}
where the expansion is finite if and only if only finitely many $\xx$ are nonzero and $\yy = \emptyset$.
%Note that the (formal) dual Hamiltonian operator is \textit{not} an actual operator $\mcF\to \mcF$ in general since its ``image'' might be an infinite series of vectors in $\mcF$.
%However, in some appropriate context, the dual Hamiltonian operator makes sense and provides good presentations of symmetric functions. See Remark \ref{rem:dual_Hamiltonian}.

To encode the action of $\omega$, we use the \defn{transposed Hamiltonian operator}, which is denoted by
\[
J(\xx/\yy) := \omega H(\xx/\yy) 
= \sum_{k>0} \frac{\omega p_k(\xx/\yy)}{k} a_{k} 
= \sum_{k>0}(-1)^{k-1} \frac{p_k(\xx/\yy)}{k} a_{k}
= -H(-\xx/{-\yy}).
\]
Indeed, we have
\begin{align*}
\bra{\mu} e^{J(\xx/\yy)} \ket{\lambda} & = 
\bra{\mu} \omega e^{H(\xx/\yy)} \ket{\lambda} 
= \bra{\mu} e^{-H(-\xx/-\yy)} \ket{\lambda} 
=
\omega s_{\lambda/\mu}(\xx/\yy) = s_{\lambda'/\mu'}(\xx/\yy)
= s_{\lambda/\mu}(-\yy/{-\xx}).
% = \bra{\mu} e^{H(-\yy/{-\xx})} \ket{\lambda} = \bra{\mu} e^{-H(-\xx/{-\yy})} \ket{\lambda}.
\end{align*}
We can realize this more fundamentally using free-fermions by realizing the $01$-sequence of $\lambda'$ is equal to that of $\lambda$ in reverse and interchanging $0 \leftrightarrow 1$, and so
\begin{subequations}
\label{eq:conjugate_form}
\begin{align}
\ket{\lambda'} & = (-1)^{\abs{\lambda}} \psi^*_{-\lambda_1} \psi^*_{1-\lambda_2} \dotsm \psi^*_{\ell-1-\lambda_{\ell}} \ket{\ell},
\\
\bra{\mu'} & = (-1)^{\abs{\mu}} \bra{\ell} \psi_{\ell-1-\mu_{\ell}} \psi_{\ell-2-\mu_{\ell-1}} \dotsm \psi_{-\mu_1},
\end{align}
\end{subequations}
(\textit{cf.}~\cite[Prop.~2.1]{AZ13}).
Theorem~\ref{thm:schur_jacobi_trudi} states $s_{\lambda'/\mu'}(\xx/\yy) = \bra{\mu'}e^{H(\xx/\yy)}\ket{\lambda'}$, but by using these alternate descriptions and Wick's theorem, we can obtain the dual Jacobi--Trudi formula for $(-1)^{\abs{\lambda/\mu}} s_{\lambda'/\mu'}(\xx/\yy)$.
This is also the same determinant formula from $\bra{\mu} e^{J(\xx/\yy)} \ket{\lambda}$ by Wick's theorem, which yields the nontrivial relation
\[
\bra{\mu} e^{J(\xx/\yy)} \ket{\lambda} = \bra{\mu'} e^{H(\xx/\yy)} \ket{\lambda'}.
\]
Hence, this identity is equivalent to $s_{\lambda/\mu}(\xx/\yy) = (-1)^{\abs{\lambda/\mu}} s_{\lambda'/\mu'}(\yy/\xx)$.

We also require the following expansions for a single variable $\gamma$:
\begin{equation}
\label{eq:H_expansion}
e^{H(\gamma)} = \sum_{i=0}^{\infty} H_i(a_1, a_2, \ldots) \gamma^i,
\qquad\qquad
e^{J(\gamma)} = \sum_{i=0}^{\infty} E_i(a_1, a_2, \ldots) \gamma^i.
\end{equation}
As a consequence, we have $[e^{H(\gamma)}, a_k] = [e^{J(\gamma)}, a_k] = 0$ for all $k \geq 0$ and $[e^{H(\gamma)},a_{-k}]=\gamma^ke^{H(\gamma)}$.
%\[
%[e^{H(\gamma)}, a_{-k}] = \sum_{i=0}^{\infty} i H_{i-k}(a_1, a_2, \ldots) \gamma^i,
%\qquad\qquad
%[e^{J(\gamma)}, a_{-k}] = \sum_{i=0}^{\infty} (-1)^{k-1} i J_{k-i}(a_1, a_2, \ldots) \gamma^i,
%\]
%for all $k > 0$.
%Note that $H_m = J_m = 0$ for all $m < 0$.

%We must sometimes be careful of the length of partitions.
%To do so, we will sometimes work in the quotient space $\mcF^\ast/M_{[\bt]}^{\ell\perp}$ defined by
%\[
%M_{[\bt]}^{\ell\perp} =
%\{
%\bra{v}\in \mcF^\ast \mid \langle v|\lambda\rangle_{[\bt]}=0 \mbox{ for all $\lambda$ with $\ell(\lambda) \leq \ell$}
%\}.
%\]
%Intuitively, we are simply neglecting partitions $\lambda$ of length $\ell(\lambda) > \ell$.

In order to make the indexing match the usual covariant and contravariant indexing, for a matrix $M = [M_{\lambda}^{\mu}]_{\lambda,\mu \in \mcP}$, we will denote $M^* = [(M^*)_{\mu}^{\lambda} := (M_{\lambda}^{\mu})^*]_{\mu,\lambda\in \mcP}$.
This agrees with thinking of $\ast$ as the transpose conjugate and the $\ast$ duality operator.
%since for $M_{\mu}^{\lambda} = \bra{\mu} X \ket{\lambda}$ defining the linear operator
%$
%M = \ket{\mu} \bra{\mu} X \ket{\lambda} \bra{\lambda} = \ket{\mu} M_{\mu}^{\lambda} \bra{\lambda}
%$
%yields
%\begin{align*}
%M^* = (\ket{\mu} M_{\mu}^{\lambda} \bra{\lambda})^* = \bigl( \ket{\mu} \bra{\mu} X \ket{\lambda} \bra{\lambda} \bigr)^*
%= \ket{\lambda} \bra{\lambda} X^* \ket{\mu} \bra{\mu} = \ket{\lambda} (M_{\mu}^{\lambda})^* \bra{\mu} = \ket{\lambda} (M^*)^{\mu}_{\lambda} \bra{\mu}.
%\end{align*}

%\begin{remark}\label{rem:dual_Hamiltonian}
%As mentioned above, the formal operator $e^{H^\ast(\xx/\yy)}$ is not an actual operator on $\mcF$.
%However, the expectation value $\bra{w}e^{H^\ast(\xx/\yy)}\ket{v}$ can be determined without any ambiguity since $\bra{w}e^{H^\ast(\xx/\yy)}$ is an element of $\mcF^\ast$.
%Formally, 
%We must sometimes be careful of the length of partitions.
%To do so, we will sometimes work in the quotient space $\mcF^\ast/M^{\ell\perp}$ defined by
%\[
%M^{\ell\perp} =
%\{
%\bra{v}\in \mcF^\ast \mid \langle v|\lambda\rangle=0 \mbox{ for all $\lambda$ with $\ell(\lambda) \leq \ell$}
%\}.
%\]
%Intuitively, we are simply neglecting partitions $\lambda$ of length $\ell(\lambda) > \ell$.
%\end{remark}

%=====================================================================
\section{Canonical free-fermions}
\label{sec:canonical_fermions}

In this section, we give an orthonormal basis for the space $\mcF^0$ that generalizes the one given in~\cite{Iwao20} and is a special case of the vectors in~\cite{Iwao23}.
For finitely many (noncommutative) expressions $\Phi_1,\dots,\Phi_\ell$, we will use the notation
\[
\prod_{1\leq i\leq \ell}^{\rightarrow} \Phi_i=\Phi_1\Phi_2\cdots \Phi_\ell,\qquad 
\prod_{1\leq i\leq \ell}^{\leftarrow} \Phi_i=\Phi_\ell\cdots \Phi_2\Phi_1
\]
to indicate the order of multiplication.

%%%%%%%%%%%%%%%%%%%%
\subsection{\texorpdfstring{$\bal,\bbe$}{alpha,beta}-deformed vectors and their dual vectors}

We set $A_i=\{-\alpha_1,\dots,-\alpha_i\}$, $B_i=\{\beta_1,\dots,\beta_i\}$ for $i\geq 1$, and $A_i=B_i=\emptyset$ for $i\leq 0$.
For $i\leq j$, we write 
$A_{[i,j]}=A_j\setminus A_{i-1}$, $A_{[i,j)}=A_{[i,j-1]}$, and $A_{(i,j]} = A_{[i+1,j]}$.
It will be convenient to write $A_{[i,j)} = \emptyset / A_{[j,i)}$ and $A_{(i,j]} = \emptyset / A_{(j,i]}$, which admit the formulas
\[
H(A_{[i,j)})+H(A_{[j,k)})=H(A_{[i,k)}),\quad
H(A_{(i,j]})+H(A_{(j,k]})=H(A_{(i,k]})
\]
for arbitrary $i,j,k\in \ZZ$.
If $1\leq i\leq j$, we have $A_{[i,j]}=\{-\alpha_i,-\alpha_{i+1},\dots,-\alpha_j\}$.
We make the analogous definition for $B_{[i,j)}$ and $B_{[i,j]}$ using $B_i$.

Let $\sigma = (\sigma_1, \sigma_2, \dotsc, \sigma_{\ell}) \in \ZZ_{\geq 0}^{\ell}$.
We will use the vectors
\begin{align*}
\ket{\lambda}^{\sigma}_{[\bal,\bbe]} 
& = \prod^{\rightarrow}_{1 \leq i \leq \ell} \left( e^{-H(A_{\sigma_i-1})} \psi_{\lambda_i-i} e^{H(\beta_i)} e^{H(A_{\sigma_i-1})} \right) \ket{-\ell}
%\\ 
%& = 
%\prod^{\rightarrow}_{1 \leq i \leq \ell} \left( e^{H(B_{i-1} / A_{\sigma_i-1})} \psi_{\lambda_i-i} e^{-H(B_{i-1} / A_{\sigma_i-1})} \right) \ket{-\ell}
%\\ 
%& = \left( e^{-H(A_{[1,\sigma_1)})} \psi_{\lambda_1-1} e^{H(\beta_1)} \right) \left( e^{-H(A_{[\sigma_1,\sigma_2)})} \psi_{\lambda_2-2} e^{H(\beta_2)} \right)
%\cdots \left( e^{-H(A_{[\sigma_{\ell-1},\sigma_{\ell})})} \psi_{\lambda_{\ell}-\ell} e^{H(\beta_{\ell})} \right) \ket{-\ell}
%\\ 
%& 
%=\left( e^{-H(A_{[1,\sigma_1)})} \psi_{\lambda_1-1} e^{H(\beta_1)} \right) \left( e^{H(A_{[\sigma_2,\sigma_1)})} \psi_{\lambda_2-2} e^{H(\beta_2)} \right)
%\cdots \left( e^{H(A_{[\sigma_{\ell},\sigma_{\ell-1})})} \psi_{\lambda_{\ell}-\ell} e^{H(\beta_{\ell})} \right) \ket{-\ell},
%%
%\allowdisplaybreaks 
%
\end{align*}
and
\begin{align*}
\ket{\lambda}_{\sigma}^{[\bal,\bbe]} 
&=
\prod^{\rightarrow}_{1 \leq i \leq \ell} \left( e^{H^*(A_{\sigma_i})} \psi_{\lambda_i-i} e^{-H^*(\beta_i)} e^{-H^*(A_{\sigma_i})} \right) 
{
e^{H^\ast(A_{\sigma_{\ell}})}}
\ket{-\ell}.
%\\ 
%& = \prod^{\rightarrow}_{1 \leq i \leq \ell} \left( e^{H^*(A_{\sigma_i} / B_{i-1})} \psi_{\lambda_i-1} e^{-H^*(A_{\sigma_i} / B_{i-1})} \right) \ket{-\ell}
%\\ 
%& 
%{
%= 
%e^{H^\ast(A_{\sigma_\ell}/B_\ell )}
%\prod^{\rightarrow}_{1 \leq i \leq \ell} 
% \left(
% e^{H^*(B_{[i,\ell]}/A_{(\sigma_i,\sigma_\ell]})} \psi_{\lambda_i-i} e^{-H^*(B_{[i,\ell]}/A_{(\sigma_i,\sigma_\ell]})}
% \right) \ket{-\ell}.
%}
\end{align*}
One can check that these expressions are equivalent to the following equations:
\begin{align*}
\ket{\lambda}^{\sigma}_{[\bal,\bbe]} 
& = 
e^{-H(A_{\sigma_1-1})}
\prod^{\rightarrow}_{1 \leq i \leq \ell} \left( \psi_{\lambda_i-i} e^{H(\beta_i)} 
e^{-H(A_{[\sigma_{i},\sigma_{i+1})})} 
\right) \ket{-\ell}\\
%& = 
%\left(
%e^{-H(A_{\sigma_1-1})}\psi_{\lambda_1-1}e^{H(\beta_1)}
%\right)
%\prod^{\rightarrow}_{2 \leq i \leq \ell} \left( 
%e^{-H(A_{[\sigma_{i-1},\sigma_{i})})} 
%\psi_{\lambda_i-i} e^{H(\beta_i)} 
%\right) \ket{-\ell}\\
& = 
\prod^{\rightarrow}_{1 \leq i \leq \ell} \left( e^{H(B_{i-1} / A_{\sigma_i-1})} \psi_{\lambda_i-i} e^{-H(B_{i-1} / A_{\sigma_i-1})} \right) \ket{-\ell},
\\ 
%& = \left( e^{-H(A_{[1,\sigma_1)})} \psi_{\lambda_1-1} e^{H(\beta_1)} \right) \left( e^{-H(A_{[\sigma_1,\sigma_2)})} \psi_{\lambda_2-2} e^{H(\beta_2)} \right)
%\cdots \left( e^{-H(A_{[\sigma_{\ell-1},\sigma_{\ell})})} \psi_{\lambda_{\ell}-\ell} e^{H(\beta_{\ell})} \right) \ket{-\ell}
%\\ 
%& 
%=\left( e^{-H(A_{[1,\sigma_1)})} \psi_{\lambda_1-1} e^{H(\beta_1)} \right) \left( e^{H(A_{[\sigma_2,\sigma_1)})} \psi_{\lambda_2-2} e^{H(\beta_2)} \right)
%\cdots \left( e^{H(A_{[\sigma_{\ell},\sigma_{\ell-1})})} \psi_{\lambda_{\ell}-\ell} e^{H(\beta_{\ell})} \right) \ket{-\ell},
%%
%\allowdisplaybreaks 
%
\ket{\lambda}_{\sigma}^{[\bal,\bbe]} 
%& = \prod^{\rightarrow}_{1 \leq i \leq \ell} \left( e^{H^*(A_{[1,\sigma_i]})} \psi_{\lambda_i-i} e^{-H^*(\beta_i)} e^{-H^*(A_{[1,\sigma_i]})} \right) 
%{
%e^{H^\ast(A_{\sigma_{\ell}})}}
%\ket{-\ell}
%\\ 
%%& = \prod^{\rightarrow}_{1 \leq i \leq \ell} \left( e^{H^*(A_{\sigma_i} / B_{i-1})} \psi_{\lambda_i-1} e^{-H^*(A_{\sigma_i} / B_{i-1})} \right) \ket{-\ell}
%%\\ 
%& 
&= 
e^{H^\ast(A_{\sigma_\ell}/B_\ell )}
\prod^{\rightarrow}_{1 \leq i \leq \ell} 
 \left(
 e^{H^*(B_{[i,\ell]}/A_{(\sigma_i,\sigma_\ell]})} \psi_{\lambda_i-i} e^{-H^*(B_{[i,\ell]}/A_{(\sigma_i,\sigma_\ell]})}
 \right) \ket{-\ell},
\end{align*}
where the second equality is obtained by using $e^{H(B_\ell)}\ket{-\ell}=\ket{-\ell}$.
{
In most cases, we will assume $\sigma_{\ell}=0$, where we have $e^{H^\ast(A_{\sigma_\ell})}=1$ and
\begin{equation}\label{eq:sigmaell_equals_to_zero}
\begin{aligned}
\ket{\lambda}_{\sigma}^{[\bal,\bbe]} 
%&=
%\left( e^{H^*(A_{[1,\sigma_1]})} \psi_{\lambda_1-1} e^{-H^*(\beta_1)} \right) \left( e^{H^*(A_{(\sigma_1,\sigma_2]})} \psi_{\lambda_2-2} e^{-H^*(\beta_2)} \right)
%\cdots \left( e^{H^*(A_{(\sigma_{\ell-1},\sigma_{\ell}]})} \psi_{\lambda_{\ell}-\ell} e^{-H^*(\beta_{\ell})} \right) \ket{-\ell}\\
&=e^{-H^\ast(B_\ell)}
\prod_{1\leq i\leq \ell}^{\rightarrow}
\left(
e^{H^\ast(A_{\sigma_i}\sqcup B_{[i,\ell]} )}\psi_{\lambda_i-i}e^{-H^\ast(A_{\sigma_i}\sqcup B_{[i,\ell]} )}
\right)\ket{-\ell}.
\end{aligned}
\end{equation}
}
%Note that the products are taken in increasing order as the factors also anticommute.
%\shinsuke{Perhaps we have to be more careful about the ``remainder terms'' of $\ket{\lambda}_\sigma^{[\bal,\bbe]}$. I will add a short subsection to describe them.}
%\shinsuke{arg1}

We will frequently take $\sigma = \lambda$, so we will simply write $\ket{\lambda}_{[\bal,\bbe]} := \ket{\lambda}^{\lambda}_{[\bal,\bbe]}$ and $\ket{\lambda}^{[\bal,\bbe]} := \ket{\lambda}_{\lambda}^{[\bal,\bbe]}$.
In the case $\bal = 0$, we will simply write $\ket{\lambda}_{[\bbe]} :=  \ket{\lambda}^\sigma_{[0,\bbe]}$ and $\ket{\lambda}^{[\bbe]} := \ket{\lambda}_\sigma^{[0,\bbe]}$ (note these are independent of $\sigma$), which were introduced in~\cite{Iwao20,Iwao23}.
We also define dual vectors ${}_{[\bal,\bbe]} \bra{\lambda}_{\sigma} := (\ket{\lambda}_{\sigma}^{[\bal,\bbe]})^*$ and ${}^{[\bal,\bbe]}\bra{\lambda}^{\sigma} := (\ket{\lambda}^{\sigma}_{[\bal,\bbe]})^*$, and we use similar shorthands as above such as ${}_{[\bal,\bbe]}\bra{\lambda} := {}_{[\bal,\bbe]}\bra{\lambda}_{\lambda}$.
We will justify calling these dual vectors below in Theorem~\ref{thm:dual_basis}.

In the sequel, we also require the additional vector
\[
{}^{[[\bal,\bbe]]} \bra{\mu} := 
\bra{-\ell}\prod_{1\leq i\leq \ell}^{\leftarrow}
\left(
e^{-H^\ast(B_i/A_{\mu_i})}\psi^\ast_{\mu_i-i}
e^{H^\ast(B_i/A_{\mu_i})}
\right).
%\bra{-\ell} 
%\prod_{2\leq i\leq \ell}^{\leftarrow}
%\left( \psi^*_{\mu_i-i} e^{-H^*(A_{(\mu_{i-1}, \mu_i]})} e^{H^*(\beta_i)} \right)
%\cdot 
%\left( \psi^*_{\mu_1-1} e^{-H^*(A_{\mu_1})} e^{H^*(\beta_1)} \right).
%\left( \psi^*_{\mu_{\ell}-\ell} e^{-H^*(A_{(\mu_{\ell-1}, \mu_{\ell}]})} e^{H^*(\beta_{\ell})} \right) \cdots
%\left( \psi^*_{\mu_2-2} e^{-H^*(A_{(\mu_1, \mu_2]})} e^{H^*(\beta_2)} \right)
%\left( \psi^*_{\mu_1-1} e^{-H^*(A_{[1, \mu_1]})} e^{H^*(\beta_1)} \right)
\]
%where $H^\ast(A_{(\mu_0,\mu_1]})=H^\ast(A_{[1,\mu_1]})$.
%We define it here in order to point out that ${}^{[[\bal,\bbe]]} \bra{\mu} \neq {}^{[\bal,\bbe]} \bra{\mu}$, 
%%= (\ket{\mu}_{[\bal,\bbe]})^*$, 
%even when $\bal = 0$. 
%\shinsuke{Perhaps, we should exchange the definitions of ${}^{[\bal,\bbe]} \bra{\mu}$ and ${}^{[[\bal,\bbe]]} \bra{\mu}$ for notational compatibility with $G_{/ds}$ and $G_{/}$}
%as
%\[
%{}^{[\bal,\bbe]} \bra{\mu} = \bra{-\ell}
%\left( e^{H^*(\beta_{\ell})} \psi^*_{\mu_{\ell}-\ell} e^{-H^*(A_{[\mu_{\ell},\mu_{\ell-1})})} \right) 
%\cdots \left( e^{H^*(\beta_2)} \psi^*_{\mu_2-2} e^{-H^*(A_{[\mu_2,\mu_1)})} \right)
%\left( e^{H^*(\beta_1)} \psi^*_{\mu_1-1} e^{-H^*(A_{[1,\mu_1)})} \right).
%\]

\begin{lemma}
\label{lemma:well_defined}
The vectors $\ket{\lambda}_{[\bal,\bbe]}^{\sigma}$, $\ket{\lambda}^{[\bal,\bbe]}_{\sigma}$, and ${}^{[[\bal,\bbe]]} \bra{\mu}$ are well-defined; that is
\[
\ket{\lambda, 0}_{[\bal,\bbe]}^{\widetilde{\sigma}} = \ket{\lambda}_{[\bal,\bbe]}^{\sigma},
\qquad\qquad
\ket{\lambda, 0,0}^{[\bal,\bbe]}_{\overline{\sigma}} = \ket{\lambda,0}^{[\bal,\bbe]}_{\sigma},
\qquad\qquad
{}^{[[\bal,\bbe]]} \bra{\mu,0} = {}^{[[\bal,\bbe]]} \bra{\mu},
\]
where $\widetilde{\sigma} = (\sigma_1, \dotsc, \sigma_{\ell}, \sigma_{\ell+1})$ and $\overline{\sigma} = (\sigma_1, \dotsc, \sigma_{\ell}, \sigma_{\ell+1},\sigma_{\ell+1})$.
\end{lemma}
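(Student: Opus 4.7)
My plan is to prove each of the three equalities by tracking how the ``extra'' factor(s) introduced by the extension interact with the appropriate shifted vacuum. Cases~1 and~3 are direct telescoping arguments, whereas Case~2 requires a subtler fermionic cancellation.

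For Case~1, I would isolate the $(\ell+1)$-st factor of $\ket{\lambda,0}_{[\bal,\bbe]}^{\widetilde{\sigma}}$, namely
\[
e^{-H(A_{\sigma_{\ell+1}-1})}\,\psi_{-(\ell+1)}\,e^{H(\beta_{\ell+1})}\,e^{H(A_{\sigma_{\ell+1}-1})},
\]
and check that it sends $\ket{-\ell-1}$ to $\ket{-\ell}$. Since $a_k\ket{m}=0$ for $k>0$ on any shifted vacuum $\ket{m}$, both $e^{H(\cdot)}$ factors act as the identity on $\ket{-\ell-1}$; then $\psi_{-(\ell+1)}\ket{-\ell-1}=\ket{-\ell}$ by direct creation, and the outermost $e^{-H(A_{\sigma_{\ell+1}-1})}$ again fixes $\ket{-\ell}$. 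Case~3 is dual: since $A_0=\emptyset$, the extra leftmost factor of ${}^{[[\bal,\bbe]]}\bra{\mu,0}$ is $e^{-H^*(B_{\ell+1})}\psi^*_{-(\ell+1)}e^{H^*(B_{\ell+1})}$, and applying the adjoint identities $\bra{m}e^{\pm H^*(\xx)}=\bra{m}$ and $\bra{-\ell-1}\psi^*_{-(\ell+1)}=\bra{-\ell}$ (the latter by anticommuting $\psi^*_{-(\ell+1)}$ through the $\psi_{-1},\dots,\psi_{-\ell}$ in $\bra{-\ell-1}=\bra{0}\psi_{-1}\cdots\psi_{-\ell-1}$ to cancel the rightmost $\psi_{-\ell-1}$), the same collapse holds.

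Case~2 is the crux because $e^{-H^*(\xx)}$ does not fix ket shifted vacuums. First, the repetition $\overline{\sigma}_{\ell+1}=\overline{\sigma}_{\ell+2}=\sigma_{\ell+1}$ yields cancellations $e^{-H^*(A_{\sigma_{\ell+1}})}e^{H^*(A_{\sigma_{\ell+1}})}=\id$ that kill the $A$-dependence. Then applying $\ket{-\ell-1}=\psi_{-\ell-2}\ket{-\ell-2}$ to the right-hand side, commuting $e^{-H^*(\beta_{\ell+1})}$ past $\psi_{-\ell-2}$ via~\eqref{eq:eHstar_commute}, and using $\psi_{-\ell-1}^2=0$, the equality reduces to
\[
\phi\,e^{-H^*(\beta_{\ell+1})}\,e^{-H^*(\beta_{\ell+2})}\ket{-\ell-2}=\phi\,e^{-H^*(\beta_{\ell+1})}\ket{-\ell-2},\qquad \phi:=\psi_{-\ell-1}\psi_{-\ell-2}.
\]
I would then expand $e^{-H^*(\xx)}\ket{-\ell-2}=\sum_\lambda s_\lambda(0/\xx)\,\ket{\lambda}_{-\ell-2}$ in the charge-$(-\ell{-}2)$ Schur basis (the sector-shifted analogue of Theorem~\ref{thm:schur_jacobi_trudi}), and note that for $\xx$ of at most two variables, $s_\lambda(0/\xx)=(-1)^{|\lambda|}s_{\lambda'}(\xx)$ forces $\lambda_1\in\{0,1,2\}$. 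Every such nonempty $\lambda$ produces a state with a particle at position $\lambda_1-\ell-3\in\{-\ell-2,-\ell-1\}$, hence annihilated by $\phi$. Both sides therefore collapse to $\phi\ket{-\ell-2}=\ket{-\ell}$.

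The main obstacle is this last Case~2 argument: the clean absorption trick of Cases~1 and~3 fails because $e^{-H^*(\beta_{\ell+2})}$ genuinely excites $\ket{-\ell-2}$, and the proof must replace that absorption by the fermionic observation that $\phi$ kills precisely the unwanted column-type excitations in the Schur expansion.
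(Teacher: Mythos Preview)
Your proof is correct. For Cases~1 and~3 your argument coincides with the paper's: both use $e^{H(\gamma)}\ket{m}=\ket{m}$ (equivalently $\bra{m}e^{H^*(\gamma)}=\bra{m}$), together with $\psi_{m}\ket{m}=\ket{m+1}$ and $\bra{m}\psi^*_{m}=\bra{m+1}$.

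For Case~2 the paper does not give an argument at all but simply cites \cite[Lemma~4.6]{Iwao23}, so your self-contained treatment is a genuine addition. Your reduction via the telescoping $A$-cancellations (using $\overline\sigma_{\ell+1}=\overline\sigma_{\ell+2}$), the commutation $e^{-H^*(\gamma)}\psi_k=(\psi_k-\gamma\psi_{k+1})e^{-H^*(\gamma)}$, and the observation that $\phi=\psi_{-\ell-1}\psi_{-\ell-2}$ kills every $\ket{\lambda}_{-\ell-2}$ with $1\le\lambda_1\le 2$ in the charge-shifted Schur expansion is valid. One simplification worth noting: since the $(\ell+1)$-st factor $e^{H^*(A_{\sigma_{\ell+1}})}\psi_{-\ell-1}e^{-H^*(\beta_{\ell+1})}$ is already common to both sides, it is enough to prove
\[
\psi_{-\ell-2}\,e^{-H^*(\beta_{\ell+2})}\ket{-\ell-2}=\psi_{-\ell-2}\ket{-\ell-2},
\]
which follows from the single-variable expansion $e^{-H^*(\gamma)}\ket{m}=\sum_{k\ge 0}(-\gamma)^k\ket{1^k}_m$ and $\psi_m\ket{1^k}_m=0$ for $k\ge 1$. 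This bypasses $\phi$ and the two-variable analysis, though your version is equally correct.
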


\begin{proof}
For the vectors $\ket{\lambda}_{[\bal,\bbe]}^{\sigma}$ and ${}^{[[\bal,\bbe]]} \bra{\mu}$, this follows from the fact that $e^{H(\gamma)} \ket{-\ell} = \ket{-\ell}$ by applying the expansion~\eqref{eq:H_expansion} of $e^{H(\gamma)}$ and using $a_i \ket{-\ell} = 0$ for all $i > 0$.
For the vector $\ket{\lambda}^{[\bal,\bbe]}_{\sigma}$, this is~\cite[Lemma~4.6]{Iwao23}.
\end{proof}

Next, we give an analog of~\eqref{eq:conjugate_form}, which will be useful for writing formulas in terms of conjugate shapes.
In particular, this will be important in giving a new (free-fermionic) proof of Theorem~\ref{thm:canonical_conjugate}.

\begin{lemma}
\label{lemma:conjugate_vectors}
Let $\ell\geq \lambda_1,\mu_1$.
Then we have
\begin{align*}
&\ket{\lambda'}_{[\bal,\bbe]}  = (-1)^{\abs{\lambda}} \prod_{1\leq i\leq \ell}^{\rightarrow} \left(e^{H(B_{\lambda_i-1}/A_{i-1})} \psi^*_{i-1-\lambda_i} e^{-H(B_{\lambda_i-1}/A_{i-1})}\right) \ket{\ell},\\
&\ket{\lambda'}^{[\bal,\bbe]} 
=
(-1)^{\abs{\lambda}} 
e^{H^\ast(A_{\ell}/B_{\lambda_\ell})}
\prod_{1\leq i\leq \ell}^{\rightarrow} 
\left(
e^{H^*(B_{[\lambda_i,\lambda_{\ell}]}/A_{(i,\ell]})} \psi^*_{i-1-\lambda_i} e^{-H^*(B_{[\lambda_i,\lambda_{\ell}]}/A_{(i,\ell]})} \right)\ket{\ell},
%= (-1)^{\abs{\lambda}} \prod_{1\leq i\leq \ell}^{\rightarrow} e^{H^*(A_{i-1}/B_{\lambda_i})} \psi^*_{i-1-\lambda_i} e^{-H^*(A_{i-1}/B_{\lambda_i})} \ket{\ell},
\\
&{}^{[[\bal,\bbe]]} \bra{\mu'}  = (-1)^{\abs{\lambda}} \bra{\ell} \prod_{1\leq i\leq \ell}^{\leftarrow} \left(e^{H^*(B_{\mu_i}/A_i)} \psi_{i-1-\mu_i} e^{-H^*(B_{\mu_i}/A_i)}\right).
%\\
%&\ket{\lambda'}_{[\bal,\bbe]}  = (-1)^{\abs{\lambda}} \prod_{1\leq i\leq \ell}^{\rightarrow} \left(e^{H(B_{\lambda_i-1}/A_{i-1})} \psi^*_{i-1-\lambda_i} e^{-H(B_{\lambda_i-1}/A_{i-1})}\right) \ket{\ell}.
\end{align*}
\end{lemma}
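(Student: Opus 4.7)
The three claimed identities are all $\bal, \bbe$-deformations of the classical particle-hole duality identity~\eqref{eq:conjugate_form} for the undeformed vectors $\ket{\lambda'}$ and $\bra{\mu'}$. My plan is to prove the first identity in detail, obtain the second by an entirely analogous computation (starting from the alternative form of $\ket{\lambda}^{[\bal,\bbe]}_\sigma$ and using the $H^*$ commutation relations~\eqref{eq:eHstar_relations} in place of~\eqref{eq:eH_relations}), and then derive the third from the fact that ${}^{[[\bal,\bbe]]}\bra{\mu}$ is built from operators that are the $\ast$-images of those in a vector of the first type, so applying the anti-algebra involution $\ast$ (which swaps $\psi_k \leftrightarrow \psi_k^*$ and reverses products) will turn the first identity into the third, with kets on $\ket{\ell}$ becoming bras on $\bra{\ell}$.

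For the first identity, I would start from the definition at $\sigma=\lambda'$, namely
\[
\ket{\lambda'}_{[\bal,\bbe]} = \prod^{\rightarrow}_{1 \leq i \leq \ell} \Bigl( e^{H(B_{i-1}/A_{\lambda'_i-1})} \psi_{\lambda'_i-i} e^{-H(B_{i-1}/A_{\lambda'_i-1})} \Bigr) \ket{-\ell},
\]
and reduce it to the undeformed identity~\eqref{eq:conjugate_form} by peeling off the conjugations. The key idea is to telescope: use that $e^{\pm H(X)}\ket{-\ell} = \ket{-\ell}$ (since $a_k\ket{-\ell}=0$ for $k>0$) to absorb the rightmost conjugating factor into the vacuum, and then pull neighboring $e^{H}$/$e^{-H}$ factors through the $\psi$ operators using~\eqref{eq:eH_commute}, thereby expressing $\ket{\lambda'}_{[\bal,\bbe]}$ as a sum of wedge states with coefficients that are products of supersymmetric complete homogeneous polynomials in specified subsets of $\bal \sqcup \bbe$. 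The same manipulation applied to the right-hand side, now using~\eqref{eq:eH_relations} on $\psi^*$-operators with $\ket{\ell}$ as a base vacuum, yields a parallel expansion.

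Matching the two sides then reduces to the undeformed case (where the identity is exactly~\eqref{eq:conjugate_form}), together with a combinatorial bookkeeping: in each formulation, each cell $(i,j)$ in the Young diagram of $\lambda$ contributes exactly one factor $\beta_i$ and one factor $-\alpha_j$, and the telescoping rearranges these into the conjugating set $B_{i-1}/A_{\lambda'_i-1}$ (indexing by row-and-last-occupied-column) on the LHS versus $B_{\lambda_i-1}/A_{i-1}$ (indexing by column-and-last-occupied-row) on the RHS, which are the same multiset of contributions under the transposition $\lambda \leftrightarrow \lambda'$.

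The main obstacle I anticipate is precisely this bookkeeping: the LHS and RHS have their ``$B$ vs $A$'' indices swapped between row-index and column-index, and verifying that the telescoped expansions agree term-by-term requires a careful tracking of how the exponentials rearrange as they pass through the fermion operators. An alternative, perhaps cleaner route is to argue that both sides are uniquely characterized by their pairings $\bra{0} e^{H(\xx)} \cdot$ against the dual basis; if that characterization holds independently of this lemma, then Wick's theorem applied to both sides reduces the identity to a determinant identity which follows from the transpose Jacobi--Trudi relation. I would attempt the direct telescoping argument first, falling back on the pairing approach if the combinatorial matching becomes unwieldy.
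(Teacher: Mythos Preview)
Your primary plan --- expanding both sides into linear combinations of wedge states via~\eqref{eq:eH_relations} and then matching coefficients by ``reducing to the undeformed case plus combinatorial bookkeeping'' --- has a real gap. After you expand, the LHS becomes a sum over $(k_1,\dots,k_\ell)$ of $\prod_i h_{k_i}(B_{i-1}/A_{\lambda'_i-1})$ times a wedge, and the RHS a sum over $(m_1,\dots,m_\ell)$ of $\prod_i h_{m_i}(A_{i-1}/B_{\lambda_i-1})$ times a wedge built from $\psi^*$'s on $\ket{\ell}$. The undeformed identity~\eqref{eq:conjugate_form} only matches the constant terms $k=m=0$; the remaining terms do \emph{not} match cell-by-cell, and your description ``each cell $(i,j)$ contributes one $\beta_i$ and one $-\alpha_j$'' is not how these $h$-expansions behave. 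What actually has to be shown at that point is the equality of two determinants (an $h$-form versus an $e$-form Jacobi--Trudi for a multiSchur function), which is a genuine identity and not bookkeeping. Your fallback via pairings and Wick's theorem is closer to the truth, but you would need the dual Jacobi--Trudi identity for multiSchur functions as an input rather than as a consequence, and you have not identified where that comes from.

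The paper takes a completely different and more direct route: it transforms the $\psi$-presentation on $\ket{-\ell}$ into the $\psi^*$-presentation on $\ket{\ell}$ in one pass, without ever expanding into wedges. The key device is a commutation relation: if $j_1>\cdots>j_m>k$ and $|X_i|\le j_i-j_{i+1}$, then
\[
\psi_{j_1}e^{H(X_1)}\cdots e^{H(X_{m-1})}\psi_{j_m}\cdot \psi^\ast_{k}
=(-1)^m\bigl(e^{H(X_1\sqcup\cdots\sqcup X_{m-1})}\psi^\ast_{k}e^{-H(X_1\sqcup\cdots\sqcup X_{m-1})}\bigr)\psi_{j_1}e^{H(X_1)}\cdots e^{H(X_{m-1})}\psi_{j_m},
\]
valid because the conjugated $\psi$-string only involves $\psi_j$ with $j\ge j_m>k$, hence anticommutes with $\psi_k^*$. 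One then writes $\ket{-\ell}=\psi^*_{-\ell}\ket{-\ell+1}=\cdots$ and pushes each $\psi^*_k$ leftward: for $k$ in the particle set $\{\lambda_i-i\}$ it cancels a $\psi$ and shortens the string; for $k$ in the hole set $\{p-1-\lambda'_p\}$ it passes through, picking up the sign and the precise conjugation $e^{H(B_{\lambda'_p-1}/A_{\lambda_{\lambda'_p}-1})}$ claimed. Iterating from $k=-\ell$ up to $\ell-1$ leaves exactly the RHS on $\ket{\ell}$. This avoids any determinant identity entirely and makes the role of the index sets transparent.
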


\begin{proof}
We only show the claim for $\ket{\lambda'}_{[\bal,\bbe]}$ as the proofs for $\ket{\lambda'}^{[\bal,\bbe]}$ and ${}^{[[\bal,\bbe]]}\bra{\mu'}$ are similar.
It is sufficient to consider $\ell > \max(\lambda_1, \ell(\lambda))$ by Lemma~\ref{lemma:well_defined}.
Let $X_i$ be a finite set of indeterminates.
To prove the proposition, we use the following commutation relation, which holds for $j_1>j_2>\dots>j_m>k$ and  $|X_i|\leq j_{i}-j_{i+1}$: 
\begin{equation}\label{eq:commutation_relation}
\begin{aligned}
&
\psi_{j_1}e^{H(X_1)}\psi_{j_2}e^{H(X_2)}\cdots e^{H(X_{m-1})}\psi_{j_m}\cdot \psi^\ast_{k}\\
&=
(-1)^m
\left(
e^{H(X_1\sqcup \dots \sqcup X_{m-1})}
\psi^\ast_{k}
e^{-H(X_1\sqcup \dots \sqcup X_{m-1})}
\right)
\psi_{j_1}e^{H(X_1)}\psi_{j_2}e^{H(X_2)}\cdots e^{H(X_{m-1})}\psi_{j_m}.
\end{aligned}
\end{equation}
Equation~\eqref{eq:commutation_relation} follows from the fact that 
$e^{-H(X_1\sqcup \dots \sqcup X_{m-1})}
\psi_{j_1}e^{H(X_1)}\psi_{j_2}e^{H(X_2)}\cdots e^{H(X_{m-1})}\psi_{j_m}$
is a linear combination of $\psi_{j}$ with $j_1\geq j\geq j_m$.

For a partition $\lambda$ and the transpose $\lambda'$, we put
$I=\{\lambda_1-1,\lambda_2-2,\dots,\lambda_\ell-\ell\}$ and $I^\ast=\{-\lambda_1',1-\lambda'_2,\dots,\ell-1-\lambda'_\ell\}$.
They satisfy $I\sqcup I^\ast=\{\ell-1,\ell-2,\dots,-\ell\}$.
Let
\begin{gather*}
X_i=\{\beta_i,\alpha_{\lambda_i-1},\alpha_{\lambda_i-2},\dots,\alpha_{\lambda_{i+1}}\}
=\{\beta_i\}/A_{[\lambda_i,\lambda_{i+1})},\\
\Psi_m=\psi_{\lambda_1-1}e^{H(X_1)}\psi_{\lambda_2-2}e^{H(X_2)}\cdots e^{H(X_{m-1})}\psi_{\lambda_m-m},\\
P_m(k)=e^{H(B_{m-1}/A_{\lambda_m-1})}\psi^\ast_ke^{-H(B_{m-1}/A_{\lambda_m-1})}.
\end{gather*} 
Assume $k\leq \lambda_m-m$. 
We can calculate the vector $\Psi_m\ket{k}$ depending on whether $k\in I$ or $k\in I^\ast$.
If $k\in I$, we have $k=\lambda_m-m$ and 
$\Psi_m\ket{k}=\Psi_{m-1}e^{H(X_{m-1})}\psi_{\lambda_m-m} \ket{k}=\Psi_{m-1}\ket{k+1}$;
otherwise, we have $k<\lambda_m-m$ and 
\[
\Psi_m\ket{k}=
\Psi_m\psi_{k}^\ast\ket{k+1}
=
(-1)^m
\left(
e^{H(A_{\lambda_1-1})}
P_m(k)
e^{-H(A_{\lambda_1-1})}
\right)
\Psi_m\ket{k+1}
\]
from~\eqref{eq:commutation_relation}.
Summarizing, we obtain the equation
\begin{equation}\label{eq:induction_step}
e^{-H(A_{\lambda_1-1})}\Psi_m\ket{k}
=
\begin{cases}
e^{-H(A_{\lambda_1-1})}\Psi_{m-1} \ket{k+1} & (k\in I)\\
(-1)^mP_m(k)e^{-H(A_{\lambda_1-1})}\Psi_{m}\ket{k+1}
&(k\in I^\ast)
\end{cases}.
\end{equation}
Note that, if $k\in I^\ast$ satisfies $\lambda_{m+1}-m-1<k\leq \lambda_m-m$, there uniquely exists some $p$ such that $\lambda_p'=m=p-1-k$.
Using~\eqref{eq:induction_step} repeatedly for $k=-\ell,-\ell+1,\dots,\ell-1$, we obtain
\[
\begin{aligned}
\ket{\lambda}_{[\bal,\bbe]}&=
e^{-H(A_{\lambda_1-1})}\Psi_\ell\ket{-\ell}\\
&
=(-1)^{\lambda_1'+\dots+\lambda_\ell'}
P_{\lambda_1'}(-\lambda'_1)
P_{\lambda_2'}(1-\lambda'_2)
\dots
P_{\lambda_\ell'}(\ell-1-\lambda'_\ell)
e^{-H(A_{\lambda_1-1})}\Psi_0\ket{\ell}\\
&=(-1)^{|\lambda'|}
P_{\lambda_1'}(-\lambda'_1)
P_{\lambda_2'}(1-\lambda'_2)
\dots
P_{\lambda_\ell'}(\ell-1-\lambda'_\ell)
\ket{\ell},
\end{aligned}
\]
which concludes the lemma.
\end{proof}

%\[
%P^+_i = e^{H(B_{\lambda_i-1}/A_{i-1})} \psi^*_{i-1-\lambda_i} e^{-H(B_{\lambda_i-1}/A_{i-1})} = \sum_{k=0}^{\infty} (-1)^k e_k(B_{\lambda_i-1}/A_{i-1}) \psi^*_{i-1-\lambda_i+k},
%\]
%\begin{align*}
%\braket{\mu'}{\lambda'}_{[\bal,\bbe]}
%& = (-1)^{\abs{\lambda/\mu}} \bra{\ell} \psi_{\ell-1-\mu_{\ell}} \dotsm \psi_{-\mu_1} P^+_1 \cdots P^+_{\ell} \ket{\ell}
%\\ & = (-1)^{\abs{\lambda/\mu}} \det \bigl[ \bra{\ell} \psi_{j-1-\mu_j} P^+_i \ket{\ell} \bigr]_{i,j=1}^{\ell}
%\\ & = (-1)^{\abs{\lambda/\mu}} \det \left[ \sum_{k=0}^{\infty} (-1)^k e_k(B_{\lambda_i-1}/A_{i-1}) \bra{\ell} \psi_{j-1-\mu_j} \psi^*_{i-1-\lambda_i+k} \ket{\ell} \right]_{i,j=1}^{\ell}
%\\ & = (-1)^{\abs{\lambda/\mu}} \det \left[ (-1)^{\lambda_i-\mu_j-i+j} e_{\lambda_i-\mu_j-i+j}(B_{\lambda_i-1}/A_{i-1}) \right]_{i,j=1}^{\ell}
%\\ & = \det \bigl[ e_{\lambda_i-\nu_j-i+j}(B_{\lambda_i-1} / A_{i-1}) \bigr]_{i,j=1}^{\ell}.
%\end{align*}

\begin{prop}
\label{prop:corner_decomp}
We have
\begin{equation}
\label{eq:corner_decomposition}
{}^{[\bal,\bbe]} \bra{\mu} = 
\sum_{\nu} 
\left( \prod_{(i,j) \in \mu/\nu} -(\alpha_i + \beta_j)  \right)
{}^{[[\bal,\bbe]]}\bra{\nu},
\end{equation}
where $\nu\subseteq \mu$ is formed by removing some of the corners of $\mu$.
\end{prop}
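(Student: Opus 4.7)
The plan is to reduce the identity to a row-by-row comparison of the $F$-type factors of ${}^{[\bal,\bbe]}\bra{\mu}$ with the $G$-type factors of ${}^{[[\bal,\bbe]]}\bra{\mu}$. Using equation~\eqref{eq:seHstar_commute}, I would first expand each building block as
\[
F_i := e^{-H^*(B_{i-1}/A_{\mu_i-1})}\psi^*_{\mu_i-i}e^{H^*(B_{i-1}/A_{\mu_i-1})} = \sum_{m\geq 0}h_m(B_{i-1}/A_{\mu_i-1})\,\psi^*_{\mu_i-i-m},
\]
and analogously $G_i = \sum_{m\geq 0}h_m(B_i/A_{\mu_i})\,\psi^*_{\mu_i-i-m}$, so that ${}^{[\bal,\bbe]}\bra{\mu} = \bra{-\ell}F_\ell\cdots F_1$ and ${}^{[[\bal,\bbe]]}\bra{\mu} = \bra{-\ell}G_\ell\cdots G_1$. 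The corresponding generating series differ only by the rational factor $(1+\alpha_{\mu_i}z)/(1-\beta_iz)$, and since $(1+\alpha z)/(1-\beta z) = 1 + (\alpha+\beta)z/(1-\beta z)$, extracting the coefficient of $z^{\mu_i-i}$ yields the clean one-row identity
\[
F_i = G_i - (\alpha_{\mu_i}+\beta_i)\sum_{b\geq 1}\beta_i^{b-1}\,\widehat{F}_i^{[b]}, \qquad \widehat{F}_i^{[b]} := e^{-H^*(B_{i-1}/A_{\mu_i-1})}\psi^*_{\mu_i-i-b}e^{H^*(B_{i-1}/A_{\mu_i-1})}.
\]

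Next, I would substitute this row-by-row into $\bra{-\ell}F_\ell\cdots F_1$, expanding ${}^{[\bal,\bbe]}\bra{\mu}$ as a sum indexed by subsets $S\subseteq\{1,\ldots,\ell\}$ and tuples $(b_i)_{i\in S}$ of positive integers; each term carries the prefactor $\prod_{i\in S}\bigl(-(\alpha_{\mu_i}+\beta_i)\beta_i^{b_i-1}\bigr)$. Setting $\nu_i := \mu_i - b_i$ for $i\in S$ and $\nu_i := \mu_i$ otherwise, I would then observe that the pairing of the resulting string of $\psi^*$'s against $\bra{-\ell}$ produces a dual Schur vector $\bra{\nu}$ iff the sequence $(\nu_i - i)$ is strictly decreasing, i.e.\ iff $\nu$ is a partition contained in $\mu$; all other configurations vanish by the Pauli principle $(\psi^*_k)^2 = 0$. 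Combined with $b_i\geq 1$ for $i\in S$, the partition condition $\nu_i\geq\nu_{i+1}$ then forces $b_i = 1$ and $\mu_{i+1} < \mu_i$, so $(i,\mu_i)$ must be a corner of $\mu$. Consequently only subsets of corners of $\mu$ contribute, each producing the prefactor claimed in~\eqref{eq:corner_decomposition}.

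It remains to check that, for each such corner subset $S$ and the corresponding $\nu = \mu - \sum_{i\in S}e_i$, the surviving product rearranges exactly into ${}^{[[\bal,\bbe]]}\bra{\nu}$. Here one exploits that all the operators $H^*(\cdots)$ appearing lie in the commutative subalgebra generated by $\{a_{-n}\}_{n\geq 1}$, so the exponential dressings may be permuted freely; the shifts $b_i = 1$ at the corner rows convert the $F$-type dressings into the correct $G$-type dressings for $\nu$, while the rows $i\notin S$ retain their $G$-factors intact. The main obstacle is precisely this bookkeeping in the final reassembly --- verifying that all the extraneous $H^*(\beta_i)$ and $H^*(-\alpha_{\mu_i})$ factors combine cleanly into the exponentials defining ${}^{[[\bal,\bbe]]}\bra{\nu}$, with no remainder. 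Should this prove delicate, an induction on $|\mu|$ peeling off one outer corner at a time via the elementary identity $(1+\alpha_{\mu_i}z) - (\alpha_{\mu_i}+\beta_i)z = 1 - \beta_iz$ (which encapsulates a single-corner removal) offers a cleaner route.
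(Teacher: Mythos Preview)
Your one-row identity is correct, but the expansion strategy hits a genuine obstruction at the cancellation step, not merely at the reassembly. The Pauli vanishing $(\psi^*_k)^2=0$ applies to bare fermions; your factors $G_{i+1}$ and $\widehat F_i^{[b]}$ are \emph{dressed} by different exponentials, so nothing forces a product such as $G_{i+1}\,\widehat F_i^{[1]}$ with $\mu_{i+1}=\mu_i$ to vanish (both carry $\psi^*_{\mu_i-i-1}$, but conjugated by $e^{-H^*(B_{i+1}/A_{\mu_i})}$ versus $e^{-H^*(B_{i-1}/A_{\mu_i-1})}$, which do not cancel). The same problem blocks the elimination of $b_i>1$: each dressed factor is an infinite linear combination of $\psi^*$'s, so the condition ``$\nu$ not a partition'' does not kill the term. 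Expanding everything down to bare $\psi^*$'s and invoking antisymmetry does produce a linear combination of $\bra{\nu'}$'s, but there is no mechanism forcing these to reorganise into the specific combinations ${}^{[[\bal,\bbe]]}\bra{\nu}$ you want.

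The fix is already hidden in your formula: your entire correction series resums to a single dressed operator,
\[
\sum_{b\geq 1}\beta_i^{\,b-1}\,\widehat F_i^{[b]}
\;=\;e^{-H^*(\beta_i)}\,\widehat F_i^{[1]}\,e^{H^*(\beta_i)}
\;=\;e^{-H^*(B_i/A_{\mu_i-1})}\,\psi^*_{\mu_i-i-1}\,e^{H^*(B_i/A_{\mu_i-1})}\;=:\;R_i(\mu_i-1),
\]
so that $F_i=G_i-(\alpha_{\mu_i}+\beta_i)R_i(\mu_i-1)$. Now both difficulties evaporate. First, the dressing of $R_i(\mu_i-1)$ is exactly $B_i/A_{\nu_i}$ with $\nu_i=\mu_i-1$, so it \emph{is} the $i$-th factor of ${}^{[[\bal,\bbe]]}\bra{\nu}$ and the reassembly is immediate with nothing left over. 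Second, $R_i(\mu_i-1)$ coincides with the neighbouring factor $F_{i+1}$ when $\mu_{i+1}=\mu_i$ (same dressing $B_i/A_{\mu_i-1}$, same $\psi^*_{\mu_i-i-1}$), so in that case the adjacent product is literally a square and vanishes --- this is the honest Pauli cancellation. This clean two-term recursion $Q_m(k)=R_m(k)-(\alpha_k+\beta_m)R_m(k-1)$, applied for $m=1,\ldots,\ell$, is precisely the paper's argument; your term-by-term expansion in $b_i$ unpacks $R_i(\mu_i-1)$ in a way that destroys exactly the structure needed for both steps.
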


\begin{proof}
Let 
\begin{align*}
Q_m(k)
&=e^{-H^\ast(B_{m-1}/A_{k-1})}\psi^\ast_{k-m}e^{H^\ast(B_{m-1}/A_{k-1})},\\
R_m(k)
&=e^{-H^\ast(B_{m}/A_{k})}\psi^\ast_{k-m}e^{H^\ast(B_{m}/A_{k})}
=
e^{-H^\ast(\beta_m)}e^{H^\ast(-\alpha_k)}
Q_m(k)
e^{-H^\ast(-\alpha_k)}e^{H^\ast(\beta_m)}\\
&=Q_{m+1}(k+1).
\end{align*}
We begin by noting the relation
\begin{equation}
\label{eq:rewrite_star}
\psi_{k}^* = e^{H^*(-\alpha_{k})} \psi^*_{k} e^{-H^*(-\alpha_{k})} - \alpha_{k} \psi^*_{k-1},
\end{equation}
by rewriting~\eqref{eq:seHstar_commute}.
By using~\eqref{eq:rewrite_star} and~\eqref{eq:seHstar_commute}, we have the identity
\begin{align*}
 \psi^*_{k} & = 
e^{-H^*(\beta_m)} \left(\psi^*_{k} - \beta_m \psi^*_{k-1}\right)e^{H^*(\beta_m)}
\\ & =
e^{-H^*(\beta_m)}
\left(
e^{H^*(-\alpha_{k})} \psi^*_{k} e^{-H^*(-\alpha_{k})} - (\alpha_{k} + \beta_m) \psi^*_{k-1}
\right)e^{H^*(\beta_m)},
\end{align*}
which yields 
\begin{equation}\label{eq:rec}
Q_m(k)=R_m(k)-(\alpha_{k} + \beta_m)R_m(k-1).
\end{equation}
Since $Q_{m+1}(k)R_m(k-1)=Q_{m+1}(k)^2=0$, we have $Q_{m+1}(k)Q_{m}(k)=Q_{m+1}(k)R_{m}(k)$.
By using $\bra{-\ell}R_\ell(-1)=0$ and~\eqref{eq:rec} repeatedly for $m=1,2,\dots,\ell$, we obtain
\begin{align*}
{}^{[\bal,\bbe]}\bra{\mu} 
&=
\bra{-\ell}Q_\ell(\mu_\ell)\cdots Q_{2}(\mu_2)Q_1(\mu_1)
\\
&=
\sum_{
\substack{
(\nu_1,\dots,\nu_\ell):\\
\mu_i-\nu_i\in \{0,1\}\\
\nu_i\geq \mu_{i+1}
}
}
\left(
\prod_{m:\,\nu_m=\mu_m-1}
-(\alpha_{\mu_m} + \beta_m) 
\right)
\bra{-\ell}R_\ell(\nu_\ell)\cdots R_{2}(\nu_2)R_1(\nu_1)
\\
&=
\sum_{\nu}
\left(
\prod_{(i,j) \in \mu/\nu} -(\alpha_i + \beta_j) 
\right)
{}^{[[\bal,\bbe]]}\bra{\nu},
\end{align*}
where $\nu\subseteq \mu$ is formed by removing some of the corners of $\mu$.
%which yields the desired formula when it is possible to remove a box since $A_{[1,\mu_j)} = A_{[1,\nu_j]}$ whenever $\nu_j = \mu_j - 1$.
%For the case of $\mu_j = \mu_{j+1}$, we have
%\begin{align*}
%& \hspace{-20pt} \left( e^{H^*(A_{[1,\mu_{j+1})})} e^{H^*(\beta_{j+1})} \psi^*_{\mu_{j+1}-(j+1)} e^{-H^*(A_{[1,\mu_{j+1})})} \right) \left( e^{H^*(A_{[1,\mu_j)})} e^{H^*(\beta_j)} \psi^*_{\mu_j-j} e^{-H^*(A_{[1,\mu_j)})} \right)
%\\ & = \left( e^{H^*(A_{[1,\mu_{j+1})})} e^{H^*(\beta_{j+1})} \psi^*_{\mu_{j+1}-j-1} e^{H^*(\beta_j)} \psi^*_{\mu_j-j} e^{-H^*(A_{[1,\mu_j)})} \right)
%\\ & = \left( e^{H^*(A_{[1,\mu_{j+1})})} e^{H^*(\beta_{j+1})} \psi^*_{\mu_{j+1}-j-1} e^{H^*(-\alpha_{\mu_j})} \psi^*_{\mu_j-j} e^{H^*(\beta_j)} e^{-H^*(A_{[1,\mu_j]})} \right)
%\\ & \hspace{20pt} - (\alpha_{\mu_j} + \beta_j) \left( e^{H^*(A_{[1,\mu_{j+1})})} e^{H^*(\beta_{j+1})} \psi^*_{\mu_{j+1}-j-1} \psi^*_{\mu_j-j-1} e^{H^*(\beta_j)} e^{-H^*(A_{[1,\mu_j)})} \right)
%\\ & = \left( e^{H^*(A_{[1,\mu_{j+1})})} e^{H^*(\beta_{j+1})} \psi^*_{\mu_{j+1}-{j-1}} e^{-H^*(A_{[1,\mu_{j+1})})} \right) \left( e^{H^*(A_{[1,\mu_j]})} \psi^*_{\mu_j-j} e^{H^*(\beta_j)} e^{-H^*(A_{[1,\mu_j]})} \right)
%\end{align*}
%since $\psi_{\mu_{j+1} - j - 1} \psi_{\mu_j - j - 1} = 0$.
\end{proof}

\begin{remark}
\label{rem:multischur}
The vector $\ket{\lambda}_{[\bal, \bbe]}$ is equal to $\ket{\lambda}_{\xx/\yy}$ from~\cite[Eq.~(24)]{Iwao23} with $x^{(i)} = B_{i-1}$ and $y^{(i)} = A_{\lambda_i-1}$.
As a consequence, we have that the (skew) dual canonical Grothendieck functions are (skew) multiSchur functions as defined by Lascoux~\cite{Lascoux03}.
\end{remark}

\begin{prop}
\label{prop:corner_decomp_inverse}
We have
\begin{equation}
\label{eq:inverse_corner}
{}^{[[\bal,\bbe]]} \bra{\mu} = 
\sum_{\nu \subseteq \mu} 
\left( \prod_{(i,j) \in \mu/\nu} (\alpha_i + \beta_j)  \right)
{}^{[\bal,\bbe]}\bra{\nu}.
\end{equation}
\end{prop}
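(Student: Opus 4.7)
My plan is to formally invert the expansion of Proposition~\ref{prop:corner_decomp} by strong induction on $\abs{\mu}$, since~\eqref{eq:corner_decomposition} is lower-unitriangular with respect to the containment order on partitions. The base case $\mu=\emptyset$ is immediate, as both sides reduce to $\bra{0}$.

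For the inductive step, denote by $C(\mu)$ the set of partitions obtained from $\mu$ by removing some (possibly empty) subset of its corners. I would isolate the $\rho=\mu$ summand in~\eqref{eq:corner_decomposition}, solve algebraically for ${}^{[[\bal,\bbe]]}\bra{\mu}$, and then substitute the inductive hypothesis into each of the remaining vectors ${}^{[[\bal,\bbe]]}\bra{\rho}$ with $\rho\in C(\mu)$ and $\rho\subsetneq\mu$. Collecting the coefficient of ${}^{[\bal,\bbe]}\bra{\nu}$ for a fixed $\nu\subseteq\mu$, and using the disjoint decomposition $\mu/\nu=(\mu/\rho)\sqcup(\rho/\nu)$ valid when $\nu\subseteq\rho\subseteq\mu$, the product $\prod_{(i,j)\in\mu/\nu}(\alpha_i+\beta_j)$ factors out and reduces the claim to the purely combinatorial identity
\[
\sum_{\substack{\rho\in C(\mu)\\ \nu\subseteq\rho\subseteq\mu}} (-1)^{|\mu/\rho|} \;=\; \delta_{\mu,\nu}.
\]

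The main obstacle, and essentially the only substantive step, will be verifying this identity. My key observation is that the corners of $\mu$ can be removed independently: at a corner $(i,\mu_i)$ of $\mu$ the inequality $\mu_i>\mu_{i+1}$ gives $\mu_i-1\geq\mu_{i+1}$, so removing any subset of corners of $\mu$ always produces a valid partition. Under the constraint $\nu\subseteq\rho$, the admissible corners are exactly those $(i,\mu_i)$ with $\nu_i<\mu_i$; if $k$ denotes their number, the sum collapses to the binomial $\sum_{T\subseteq[k]}(-1)^{|T|}=(1-1)^k$, which equals $1$ for $k=0$ and $0$ for $k\geq 1$. Whenever $\nu\subsetneq\mu$, the largest row index $i$ with $\nu_i<\mu_i$ satisfies $\mu_{i+1}=\nu_{i+1}\leq\nu_i<\mu_i$, exhibiting a removable corner $(i,\mu_i)$ of $\mu$ outside $\nu$ and forcing $k\geq 1$. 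This yields the Kronecker delta and closes the induction.
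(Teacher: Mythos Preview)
Your proof is correct. The induction is sound, and the key combinatorial identity $\sum_{\rho\in C(\mu),\,\nu\subseteq\rho\subseteq\mu}(-1)^{|\mu/\rho|}=\delta_{\mu,\nu}$ is exactly what is needed and is proved cleanly via the independence of corner removals and the observation that $\nu\subsetneq\mu$ forces at least one admissible corner.

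The paper offers two routes. Its primary one mirrors the proof of Proposition~\ref{prop:corner_decomp} directly at the level of fermionic operators: reading the recurrence~\eqref{eq:rec} as $R_m(k)=Q_m(k)+(\alpha_k+\beta_m)R_m(k-1)$ and iterating expands each $R_m(\mu_m)$ appearing in ${}^{[[\bal,\bbe]]}\bra{\mu}$ into a sum over all $\nu_m\leq\mu_m$, producing the full sum over $\nu\subseteq\mu$ without any inclusion--exclusion. Its stated alternative is to substitute~\eqref{eq:corner_decomposition} into the right-hand side of~\eqref{eq:inverse_corner} and simplify; your induction is precisely the M\"obius-inversion formalization of that alternative. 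What your approach buys is that it avoids any further manipulation of the operators $Q_m,R_m$ and works purely from the statement of Proposition~\ref{prop:corner_decomp}; what the paper's direct approach buys is that the expansion over all $\nu\subseteq\mu$ appears in one step from the operator recurrence, with no auxiliary sign identity to verify.
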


\begin{proof}
The proof is similar to the proof of Proposition~\ref{prop:corner_decomp}.

Alternatively, we could prove this by applying Equation~\eqref{eq:corner_decomposition} to the right hand side.
\end{proof}

%%%%%%%%%%%%%%%%%%%%
\subsection{Duality of \texorpdfstring{$\bal,\bbe$}{alpha,beta}-deformed vectors}

We will now prove that the $\bal,\bbe$-deformed vectors are dual bases (Theorem~\ref{thm:dual_basis}).

We give the rectification lemma, a key computational tool in showing how to transform vectors for a general sequence $\vv = (v_1, v_2, \ldots, v_{\ell}) \in \ZZ_{\geq 0}^{\ell}$ as
\[
\ket{\vv}^{\sigma}_{[\bal,\bbe]} = \sum_{\lambda} r_{\vv}(\lambda) \ket{\lambda}_{[\bal,\bbe]},
\qquad\qquad
\ket{\vv}_{\sigma}^{[\bal,\bbe]} = \sum_{\lambda} r^{\vv}(\lambda) \ket{\lambda}^{[\bal,\bbe]}.
\]
In particular, we obtain the polynomials $r_{\vv}(\lambda), r^{\vv}(\lambda)$ by sorting the sequence $\vv$.

\begin{lemma}[Rectification lemma]
\label{lemma:rectification}
If $k \leq j$, then
\begin{align*}
\psi_k e^{H(\gamma)} \psi_j & = \sum_{i=k}^j \gamma^{j-i+1} \psi_i e^{H(\gamma)} \psi_{k-1} - \sum_{i=k+1}^j \gamma^{j-i} \psi_i e^{H(\gamma)} \psi_k,
\\
\psi_k e^{-H^*(\gamma)} \psi_j & = \sum_{i=k}^j \gamma^{i-k+1} \psi_{j+1} e^{-H^*(\gamma)} \psi_i - \sum_{i=k}^{j-1} \gamma^{i-k} \psi_j e^{-H^*(\gamma)} \psi_i,
\end{align*}
\end{lemma}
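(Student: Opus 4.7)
The plan is to prove both identities by induction on $d := j - k \geq 0$, reducing each to a ``shift'' recurrence obtained from the single-variable specializations of \eqref{eq:eH_commute} and \eqref{eq:eHstar_commute}. Setting $\xx = \gamma$ and $\yy = \emptyset$ in \eqref{eq:eH_commute} gives the expansion $e^{H(\gamma)} \psi_j = \sum_{i \geq 0} \gamma^i \psi_{j-i} e^{H(\gamma)}$; peeling off the $i = 0$ term recognizes the remainder as $\gamma\, e^{H(\gamma)} \psi_{j-1}$, which yields
\[
\psi_k e^{H(\gamma)} \psi_j = \psi_k \psi_j e^{H(\gamma)} + \gamma\, \psi_k e^{H(\gamma)} \psi_{j-1}.
\]
Dually, from \eqref{eq:eHstar_commute} we obtain $\psi_k e^{-H^*(\gamma)} = e^{-H^*(\gamma)} \sum_{i \geq 0} \gamma^i \psi_{k+i}$, and the same trick produces
\[
\psi_k e^{-H^*(\gamma)} \psi_j = e^{-H^*(\gamma)} \psi_k \psi_j + \gamma\, \psi_{k+1} e^{-H^*(\gamma)} \psi_j.
\]

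The base case $d = 0$ of either identity is immediate: the RHS collapses to the single term $\gamma\, \psi_k e^{H(\gamma)} \psi_{k-1}$ or $\gamma\, \psi_{k+1} e^{-H^*(\gamma)} \psi_k$ respectively, and the corresponding recurrence reduces to exactly that single term via $\psi_k^2 = 0$. For the inductive step of the first identity, I would substitute the first recurrence and apply the induction hypothesis to $\psi_k e^{H(\gamma)} \psi_{j-1}$ (legitimate since $(j-1)-k = d-1$); this produces all summands of the desired RHS except the $i = j$ contributions $\gamma\, \psi_j e^{H(\gamma)} \psi_{k-1} - \psi_j e^{H(\gamma)} \psi_k$. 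Those residual terms match the leftover piece $\psi_k \psi_j e^{H(\gamma)}$ by applying the same recurrence to $\psi_j e^{H(\gamma)} \psi_k$ and using the anti-commutation $\psi_j \psi_k = -\psi_k \psi_j$.

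The second identity is proved in an entirely parallel fashion: use the second recurrence, apply the induction hypothesis on $\psi_{k+1} e^{-H^*(\gamma)} \psi_j$ (where $j - (k+1) = d-1$), and absorb the missing $i = k$ boundary terms $\gamma\, \psi_{j+1} e^{-H^*(\gamma)} \psi_k - \psi_j e^{-H^*(\gamma)} \psi_k$ into $e^{-H^*(\gamma)} \psi_k \psi_j$ by one more application of the recurrence to $\psi_j e^{-H^*(\gamma)} \psi_k$.

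I expect the main obstacle to be index bookkeeping rather than any conceptual difficulty: one must carefully verify that the summation ranges produced by the induction hypothesis and those demanded by the stated RHS align summand-by-summand, and that the ``boundary'' terms at $i = j$ (resp.\ $i = k$) are exactly what the final anti-commutation step supplies. Once the two shift recurrences are extracted, the rest is a short, symmetric calculation.
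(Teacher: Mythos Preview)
Your inductive argument is correct. The paper takes a different, non-inductive route: it fully expands $e^{H(\gamma)}\psi_j = \sum_{i\geq 0}\gamma^i\psi_{j-i}e^{H(\gamma)}$, then splits the infinite sum at $i=j-k$. The range $i<j-k$ is handled by anticommuting $\psi_k$ past $\psi_{j-i}$ and then using the finite inverse relation $\psi_k e^{H(\gamma)} = e^{H(\gamma)}(\psi_k - \gamma\psi_{k-1})$; the tail $i>j-k$ is resummed into $\gamma^{j-k+1}\psi_k e^{H(\gamma)}\psi_{k-1}$. Your approach replaces that splitting-and-resumming by the single ``shift'' recurrence $e^{H(\gamma)}\psi_j = \psi_j e^{H(\gamma)} + \gamma e^{H(\gamma)}\psi_{j-1}$ (and its $H^*$ analogue), which is precisely the finite inverse relation the paper uses implicitly. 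The induction packages the bookkeeping more cleanly and never touches an infinite tail, at the cost of an extra boundary verification; the paper's direct expansion is a one-pass computation but requires seeing how the pieces recombine. Both arguments rest on the same two facts---the single-variable form of~\eqref{eq:eH_commute}/\eqref{eq:eHstar_commute} and the Clifford relation $\psi_k\psi_j=-\psi_j\psi_k$---so neither is deeper than the other.
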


\begin{proof}
For the first equality, using~\eqref{eq:eH_commute} and $\psi_k \psi_k = 0$, we have
\begin{align*}
\psi_k e^{H(\gamma)} \psi_j & =  \sum_{i=0}^{\infty} \psi_k\gamma^i \psi_{j-i} e^{H(\gamma)}
%= \sum_{i=0}^{j-k-1} \gamma^i \psi_k \psi_{j-i} e^{H(\gamma)} + \sum_{i=j-k+1}^{\infty} \gamma^i \psi_k \psi_{j-i} e^{H(\gamma)}
%\allowdisplaybreaks \\
%& = -\sum_{i=0}^{j-k-1} \gamma^i \psi_{j-i} \psi_k e^{H(\gamma)} + \gamma^{j-k+1}  \sum_{i=0}^{\infty} \gamma^i\psi_k\psi_{k-1-i} e^{H(\gamma)}
%\allowdisplaybreaks \\
%& = -\sum_{i=0}^{j-k-1} \gamma^i \psi_{j-i} e^{H(\gamma)} (\psi_k - \gamma \psi_{k-1}) + \gamma^{j-k+1} \psi_k e^{H(\gamma)} \psi_{k-1}
\allowdisplaybreaks \\
& = -\sum_{i=0}^{j-k-1} \gamma^i \psi_{j-i} e^{H(\gamma)} \psi_k + \sum_{i=0}^{j-k-1} \gamma^{i+1} \psi_{j-i} e^{H(\gamma)} \psi_{k-1} + \gamma^{j-k+1} \psi_k e^{H(\gamma)} \psi_{k-1},
\end{align*}
which yields the claim after reindexing the sums.
The second equality is similarly computed but using~\eqref{eq:eHstar_commute}:
\begin{align*}
\psi_k e^{-H^*(\gamma)} \psi_j & = \sum_{i=0}^{\infty} e^{-H^*(\gamma)} \gamma^i \psi_{k+i} \psi_j
%= -\sum_{i=0}^{j-k-1} \gamma^i e^{-H^*(\gamma)} \psi_j \psi_{k+i} + \sum_{i=j-k+1}^{\infty} \gamma^i e^{-H^*(\gamma)} \psi_{k+i} \psi_j
%\allowdisplaybreaks \\
%& = -\sum_{i=0}^{j-k-1} \gamma^i (\psi_j - \gamma \psi_{j+1}) e^{-H^*(\gamma)} \psi_{k+i} + \sum_{i=j-k+1}^{\infty} \gamma^i e^{-H^*(\gamma)} \psi_{k+i} \psi_j
\allowdisplaybreaks \\
& = -\sum_{i=0}^{j-k-1} \gamma^i (\psi_j - \gamma \psi_{j+1}) e^{-H^*(\gamma)} \psi_{k+i} + \gamma^{j-k+1} \psi_{j+1} e^{-H^*(\gamma)} \psi_j,
\end{align*}
which is the claim after splitting the sum.
\end{proof}

A consequence of Lemma~\ref{lemma:rectification} is a generalization of~\cite[Cor.~2.10]{Iwao20}, which is the special case when $j = k$.

\begin{lemma}
\label{lemma:reduction}
We have
\[
\ket{\lambda}_{[\bal,\bbe]} = \sum_{\mu \subseteq \lambda} b_{\lambda}^{\mu} \ket{\mu}_{[\bbe]},
\qquad\qquad
\ket{\lambda}^{[\bal,\bbe]} = \sum_{\lambda \subseteq \mu} B_{\lambda}^{\mu} \ket{\mu}^{[\bbe]},
\]
where $b_{\lambda}^{\mu}, B_{\lambda}^{\mu} \in \ZZ[\bal, \bbe]$ with $b_{\lambda}^{\lambda} = B_{\lambda}^{\lambda} = 1$.
\end{lemma}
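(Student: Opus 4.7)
The plan is to extract the $\bal$ dependence from each definition using the commutation relations in~\eqref{eq:eH_relations} and~\eqref{eq:eHstar_relations}, and then sort the resulting free-fermion words into partition-indexed vectors by iterating the rectification lemma (Lemma~\ref{lemma:rectification}).

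For the first identity, I would use the factorization $e^{H(B_{i-1}/A_{\lambda_i-1})} = e^{H(B_{i-1})} e^{-H(A_{\lambda_i-1})}$ in the alternative product form of $\ket{\lambda}_{[\bal,\bbe]}$, and peel off the $A$-conjugation via~\eqref{eq:eH_commute} with $\xx=\emptyset$, $\yy=A_{\lambda_i-1}$. This produces the finite sum
\[
e^{-H(A_{\lambda_i-1})} \psi_{\lambda_i-i} e^{H(A_{\lambda_i-1})} = \sum_{k_i=0}^{\lambda_i-1} e_{k_i}(\alpha_1, \dotsc, \alpha_{\lambda_i-1}) \psi_{\lambda_i-i-k_i}.
\]
Substituting and setting $v_i = \lambda_i - k_i$, we obtain
\[
\ket{\lambda}_{[\bal,\bbe]} = \sum_{\vv} c_\lambda^\vv(\bal) \ket{\vv}_{[\bbe]},
\qquad c_\lambda^\vv = \prod_i e_{\lambda_i-v_i}(\alpha_1,\dotsc,\alpha_{\lambda_i-1}) \in \ZZ[\bal],
\]
with $c_\lambda^\vv$ homogeneous of degree $\abs{\lambda}-\abs{\vv}$ and $c_\lambda^\lambda = 1$; the sum ranges over sequences $\vv$ with $0 \leq v_i \leq \lambda_i$.

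The next step is to iteratively apply the first identity in Lemma~\ref{lemma:rectification} (with $\gamma=\beta_i$) to any adjacent pair $(v_i,v_{i+1})$ with $v_i \leq v_{i+1}$, which corresponds to $\psi$-indices $k \leq j$. Inspecting the formula shows that each such swap produces new sequences whose total $\sum v_i$ weakly decreases (strictly in the first sum of Lemma~\ref{lemma:rectification}) and whose new $\psi$-indices lie in $[k-1, j]$, hence never exceed the local maximum. Termination follows by induction on a suitable bigraded invariant such as $(\abs{\vv}, \#\{i:v_i\leq v_{i+1}\})$ in lex order, yielding $\ket{\vv}_{[\bbe]} = \sum_\mu r_\vv^\mu(\bbe) \ket{\mu}_{[\bbe]}$ with $r_\vv^\mu \in \ZZ[\bbe]$ homogeneous of degree $\abs{\vv}-\abs{\mu}$ and $r_\lambda^\lambda = 1$. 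Composing gives $b_\lambda^\mu = \sum_\vv c_\lambda^\vv r_\vv^\mu \in \ZZ[\bal,\bbe]$, and since $b_\lambda^\lambda$ must have degree zero in both $\bal$ and $\bbe$, only the term $\vv=\lambda$ contributes, forcing $b_\lambda^\lambda = 1$.

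The containment $\mu \subseteq \lambda$ is the main obstacle. I expect to establish it by induction on the number of rectification swaps, showing that the invariant ``the partition-support is contained in $\lambda$'' is preserved under each step; this relies on the structural fact that Lemma~\ref{lemma:rectification} produces new indices only in the interval $[k-1,j]$, combined with the initial componentwise bound $v_i \leq \lambda_i$. The second identity $\ket{\lambda}^{[\bal,\bbe]} = \sum_{\lambda\subseteq\mu} B_\lambda^\mu \ket{\mu}^{[\bbe]}$ is proved by the dual procedure, now using~\eqref{eq:eHstar_commute} in the form
\[
e^{H^*(A_{\lambda_i})} \psi_{\lambda_i-i} e^{-H^*(A_{\lambda_i})} = \sum_{j\geq 0} (-1)^j h_j(\alpha_1,\dotsc,\alpha_{\lambda_i}) \psi_{\lambda_i-i+j}
\]
(an index-raising series, infinite in principle but level-wise finite in the grading), together with the second identity of Lemma~\ref{lemma:rectification} for rectification. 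The symmetric degree and containment analysis then yields $\mu \supseteq \lambda$ and $B_\lambda^\lambda = 1$.
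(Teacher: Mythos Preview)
Your proposal is correct and follows essentially the same approach as the paper: expand away the $\bal$-dependence using~\eqref{eq:eH_commute} and~\eqref{eq:eHstar_commute} to land on sums of generalized vectors $\ket{\vv}_{[\bbe]}$ (resp.\ $\ket{\vv}^{[\bbe]}$), and then invoke Lemma~\ref{lemma:rectification} to rectify these into partition-indexed vectors. The paper's proof is terser than yours---it simply records the expansions $\ket{\lambda}_{[\bal,\bbe]}=\sum_{p}\prod_i(-1)^{p_i}e_{p_i}(A_{\lambda_i-1})\ket{\lambda-(p_1,\dots,p_\ell)}_{[\bbe]}$ and the dual version, then states that the claim follows from Lemma~\ref{lemma:rectification}---whereas you additionally sketch a degree argument for $b_\lambda^\lambda=1$ and honestly flag the containment $\mu\subseteq\lambda$ as the point requiring further work; the paper does not spell out that inductive step either.
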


\begin{proof}
%A particular case of~\eqref{eq:eH_commute} for $\xx = 0$ and $\yy_k = (y_1, \dotsc, y_k, 0, \ldots)$ yields
%\[
%\psi_n e^{H(\yy_k)} = e^{H(\yy_k)} \sum_{i=0}^k (-1)^i e_i(\yy_k) \psi_{n-i}
%\]
%and for~\eqref{eq:eHstar_commute} for $\xx = 0$ yields
%\[
%\psi_n e^{-H^*(\yy)} = e^{H^*(\yy)} \sum_{i=0}^{\infty} \psi_{n+i} h_i(\yy).
%\]
Applying the relations~\eqref{eq:eH_commute} and~\eqref{eq:eHstar_commute}, we have
\begin{align*}
\ket{\lambda}_{[\bal,\bbe]} & 
=
\prod^{\rightarrow}_{1 \leq i \leq \ell} \left( e^{H(B_{i-1} / A_{\lambda_i-1})} \psi_{\lambda_i-i} e^{-H(B_{i-1} / A_{\lambda_i-1})} \right) \ket{-\ell}\\
&=
\prod^{\rightarrow}_{1 \leq i \leq \ell} 
\sum_{p=0}^\infty
\left( 
(-1)^{p}e_{p}(A_{\lambda_i-1})\cdot 
e^{H(B_{i-1})} \psi_{\lambda_i-i-p} e^{-H(B_{i-1})} 
\right) \ket{-\ell}\\
&=
\sum_{p_1,\dots,p_\ell=0}^\infty
\prod^{\rightarrow}_{1 \leq i \leq \ell} 
\left( 
(-1)^{p_i}e_{p_i}(A_{\lambda_i-1})\cdot 
e^{H(B_{i-1})} \psi_{\lambda_i-i-p_i} e^{-H(B_{i-1})} 
\right) \ket{-\ell}\\
%&= e^{-H(A_{[1,\lambda_1)})} \psi_{\lambda_1-1} e^{H(\beta_1)} e^{H(A_{[\lambda_2,\lambda_1)})} \psi_{\lambda_2-2} e^{H(\beta_2)} \times \cdots
%\\ & \hspace{20pt} \times e^{H(A_{[\lambda_{\ell-1},\lambda_{\ell-2})})} \psi_{\lambda_{\ell-1}-(\ell-1)} e^{H(\beta_{\ell-1})} e^{H(A_{[\lambda_{\ell},\lambda_{\ell-1})})} \psi_{\lambda_{\ell}-\ell} e^{H(\beta_{\ell})} \ket{-\ell}
%%
%\\ & = e^{-H(A_{[1,\lambda_1)})} \psi_{\lambda_1-1} e^{H(\beta_1)} e^{H(A_{[\lambda_2,\lambda_1)})} \psi_{\lambda_2-2} e^{H(\beta_2)} \times \cdots
%\\ & \hspace{20pt} \times e^{H(A_{[\lambda_{\ell-1},\lambda_{\ell-2})})} \psi_{\lambda_{\ell-1}-(\ell-1)} e^{H(\beta_{\ell-1})} e^{H(A_{[\lambda_{\ell},\lambda_{\ell-1})})} \psi_{\lambda_{\ell}-\ell} e^{H(\beta_{\ell})} e^{H(A_{[1,\lambda_{\ell})})} \ket{-\ell}
%%
%\\ & = \sum_{i_{\ell}=0}^{\infty} (-1)^{i_{\ell}} e_{i_{\ell}}(A_{\lambda_{\ell}-1}) e^{-H(A_{[1,\lambda_1)})} \psi_{\lambda_1-1} e^{H(\beta_1)} e^{H(A_{[\lambda_2,\lambda_1)})} \psi_{\lambda_2-2} e^{H(\beta_2)} \times \cdots
%\\ & \hspace{20pt} \times e^{H(A_{[\lambda_{\ell-1},\lambda_{\ell-2})})} \psi_{\lambda_{\ell-1}-(\ell-1)} e^{H(\beta_{\ell-1})} e^{H(A_{[1,\lambda_{\ell-1})})} \psi_{\lambda_{\ell}-{i_{\ell}}-\ell} e^{H(\beta_{\ell})} \ket{-\ell}
%%
%\\ & = \cdots
%%
& = \sum_{p_1,\dotsc,p_{\ell}=0}^{\infty} \prod_{i=1}^{\ell} (-1)^{p_i} e_{p_i}(A_{\lambda_i-1}) \ket{\lambda - (p_1, \dotsc, p_{\ell})}_{[\bbe]},
%----
\\ \ket{\lambda}^{[\bal,\bbe]} & = \sum_{p_1,\dotsc,p_{\ell}=0}^{\infty} \prod_{i=1}^{\ell} h_{p_i}(A_{\lambda_i}) \ket{\lambda + (p_1, \dotsc, p_{\ell})}^{[\bbe]}.
\end{align*}
The claim then follows from Lemma~\ref{lemma:rectification}.
\end{proof}

We require the following dual basis theorem for the special case of $\bal = 0$.

\begin{thm}[{\cite[Thm.~3.2]{Iwao23}}]
We have
\begin{equation}
\label{eq:refined_duality}
{}_{[\bbe]}\braket{\mu}{\lambda}_{[\bbe]} = {}^{[\bbe]}\braket{\mu}{\lambda}^{[\bbe]} = \delta_{\lambda\mu}.
\end{equation}
\end{thm}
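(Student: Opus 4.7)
My strategy is to prove the duality by establishing triangular Schur-basis expansions of both vector families and then verifying the resulting inverse-matrix identity. The two claimed identities are interchanged by the $\ast$-involution, so I focus on ${}^{[\bbe]}\braket{\mu}{\lambda}^{[\bbe]} = \delta_{\lambda\mu}$; the other follows analogously.

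First, using the relation $e^{H(\beta)}\psi_k = \sum_{i \geq 0}\beta^i\psi_{k-i}e^{H(\beta)}$ (a consequence of~\eqref{eq:eH_commute}) together with $e^{H(\beta)}\ket{-\ell} = \ket{-\ell}$, I would push all $e^{H(\beta_i)}$ factors in
\[
\ket{\lambda}_{[\bbe]} = \psi_{\lambda_1-1}\,e^{H(\beta_1)}\,\psi_{\lambda_2-2}\,\cdots\,\psi_{\lambda_\ell-\ell}\ket{-\ell}
\]
past the fermion operators to the right, where they absorb into the vacuum. Collecting terms and sorting the resulting $\psi$-products into partition form yields a triangular expansion $\ket{\lambda}_{[\bbe]} = \sum_{\mu \subseteq \lambda}c_\lambda^\mu(\bbe)\ket{\mu}$ with $c_\lambda^\lambda = 1$ and $c_\lambda^\mu(\bbe) \in \ZZ[\bbe]$. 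An analogous manipulation of $\ket{\lambda}^{[\bbe]}$ via~\eqref{eq:eHstar_commute} (whose index shifts go upward rather than downward, and where the $e^{-H^*(\beta_i)}$ factors must be moved to the far left and matched against factors coming from any bra) produces a Schur expansion triangular in the opposite direction, $\ket{\lambda}^{[\bbe]} = \sum_{\mu \supseteq \lambda}C_\lambda^\mu(\bbe)\ket{\mu}$ with $C_\lambda^\lambda = 1$. Taking $\ast$ of the first expansion yields the Schur decomposition of ${}^{[\bbe]}\bra{\mu}$, so the pairing reduces to
\[
{}^{[\bbe]}\braket{\mu}{\lambda}^{[\bbe]} = \sum_{\lambda \subseteq \nu \subseteq \mu} c_\mu^\nu(\bbe)\,C_\lambda^\nu(\bbe).
\]
This sum is automatically empty (hence zero) whenever $\lambda \not\subseteq \mu$, and reduces to $c_\lambda^\lambda C_\lambda^\lambda = 1$ when $\mu = \lambda$; only the strictly off-diagonal case $\lambda \subsetneq \mu$ requires nontrivial cancellation.

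The main obstacle is establishing this off-diagonal cancellation of $\sum_{\lambda\subseteq\nu\subseteq\mu} c_\mu^\nu(\bbe)C_\lambda^\nu(\bbe) = 0$. The cleanest route invokes Remark~\ref{rem:multischur}, which realizes $\ket{\lambda}_{[\bbe]}$ as a multi-Schur vector of Lascoux~\cite{Lascoux03}; within that framework, biorthogonality with an explicit dual basis is already established. Alternatively, one can identify $c_\lambda^\mu(\bbe)$ and $C_\lambda^\mu(\bbe)$ at $\alpha = 0$ as weight-generating functions over reverse plane partitions and set-valued tableaux respectively, reducing the claim to the classical Hall-duality of (dual) Grothendieck polynomials. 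A third route, staying purely within the fermionic calculus, is to apply Wick's theorem (Theorem~\ref{thm:Wick}) directly to the pairing after commuting the $e^{\pm H^*(\beta_i)}$ factors past the fermion operators using~\eqref{eq:seHstar_commute}, and then verifying that the resulting determinant collapses to $\delta_{\lambda\mu}$ via a cofactor expansion that exploits the triangular structure of the matrix entries $\langle\psi_{m_i}\psi_{n_j}^*\rangle$.
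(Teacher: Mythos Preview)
The paper does not prove this statement itself; it is quoted from~\cite{Iwao23}. The closest in-paper analogue is the proof of the $(\bal,\bbe)$-generalization in Theorem~\ref{thm:dual_basis}, whose method is a direct Wick's-theorem computation showing the resulting matrix is triangular with the correct diagonal.

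Your triangularity reduction is correct and is exactly the preliminary step that the paper's proof of Theorem~\ref{thm:dual_basis} also performs (there via Lemma~\ref{lemma:reduction} together with the present statement). The gap is that the heart of the argument---the off-diagonal cancellation for $\lambda\subsetneq\mu$---is not established. Of your three routes: Route~1 (multiSchur) merely relocates the citation. Route~2 is circular in the refined setting: the Hall-duality of refined (dual) Grothendieck functions is essentially equivalent to the identity you are proving, so you cannot invoke it. Route~3 is the right one, and matches what the paper does for the generalization, but you have only sketched it.

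To close the gap, carry out Route~3 explicitly. Writing (at $\bal=0$)
\[
P_i=e^{H(B_{i-1})}\psi_{\lambda_i-i}e^{-H(B_{i-1})},\qquad
Q_j=e^{H(B_{j-1})}\psi^*_{\mu_j-j}e^{-H(B_{j-1})},
\]
Wick's theorem gives
\[
{}_{[\bbe]}\braket{\mu}{\lambda}_{[\bbe]}
=\det\bigl[h_{\lambda_i-\mu_j-i+j}(B_{[j,i)})\bigr]_{i,j=1}^{\ell}.
\]
Your triangularity argument already reduces to $\mu\subseteq\lambda$. Under that hypothesis, for $i<j$ one has $\lambda_i\geq\lambda_j\geq\mu_j$, so $m:=\lambda_i-\mu_j-i+j\geq j-i$, while $h_m(B_{[j,i)})=(-1)^m e_m(B_{[i,j)})$ vanishes for $m>j-i$ since $\lvert B_{[i,j)}\rvert=j-i$; hence the strict upper triangle contributes nothing to the determinant. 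The diagonal entry is $h_{\lambda_i-\mu_i}(\emptyset)=\delta_{\lambda_i,\mu_i}$, so the determinant equals $\prod_i\delta_{\lambda_i,\mu_i}=\delta_{\lambda\mu}$. This is the missing computation; with it, your outline becomes a complete proof that coincides with the method behind Theorem~\ref{thm:dual_basis} specialized to $\bal=0$.
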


\begin{lemma}
\label{lemma:refined_expand_det}
For $\ell \geq \ell(\lambda), \ell(\mu)$ and $\ell' \geq \lambda_1, \mu_1$, we have
\begin{subequations}
\begin{align}
b_{\lambda}^{\mu} & = \det \bigl[ h_{\lambda_i-\mu_j-i+j}(B_{[j,i)} / A_{\lambda_i-1}) \bigr]_{i,j=1}^{\ell}
\\ & = (-1)^{\abs{\lambda} - \abs{\mu}} \det \bigl[ h_{\lambda'_i-\mu'_j-i+j}(A_{i-1} / B_{(\mu'_j,\lambda'_i)}) \bigr]_{i,j=1}^{\ell'}, \label{eq:dualb_det_conjugate}
\\
B_{\lambda}^{\mu} & = \det \bigl[ h_{-\lambda_i+\mu_j+i-j}(A_{\lambda_i} / B_{[j,i)}) \bigr]_{i,j=1}^{\ell} \label{eq:b_det}
\\ & = (-1)^{\abs{\mu} - \abs{\lambda}} \det \bigl[ h_{-\lambda'_i + \mu'_j + i - j}(B_{[\mu'_j,\lambda'_i]} / A_{i-1}) \bigr]_{i,j=1}^{\ell'}. \label{eq:b_det_conjugate}
\end{align}
\end{subequations}
\end{lemma}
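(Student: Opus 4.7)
The plan is to realize each expansion coefficient as a fermionic vacuum expectation value and evaluate it via Wick's theorem. By Lemma~\ref{lemma:reduction} and the dual basis identity~\eqref{eq:refined_duality}, we have
\[
b_\lambda^\mu = {}^{[\bbe]}\braket{\mu}{\lambda}_{[\bal,\bbe]},
\qquad
B_\lambda^\mu = {}_{[\bbe]}\braket{\mu}{\lambda}^{[\bal,\bbe]}.
\]
I would then expand every dressed factor on both sides of each pairing using~\eqref{eq:eH_relations}: for example, the $i$-th factor of $\ket{\lambda}_{[\bal,\bbe]}$ reads
\[
e^{H(B_{i-1}/A_{\lambda_i-1})}\psi_{\lambda_i-i}e^{-H(B_{i-1}/A_{\lambda_i-1})} = \sum_{p \geq 0} h_p(B_{i-1}/A_{\lambda_i-1})\, \psi_{\lambda_i-i-p},
\]
and the dressed $\psi^*$ factors arising in ${}^{[\bbe]}\bra{\mu}$ (obtained by dualizing the analogous expression for $\ket{\mu}^{[\bbe]}$) expand similarly through~\eqref{eq:seH_commute}. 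Once both vectors are written as sums over products of bare $\psi$'s and $\psi^*$'s, Wick's theorem (Theorem~\ref{thm:Wick}) collapses each vacuum expectation value to a determinant of two-point pairings.

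For the first formula for $b_\lambda^\mu$, the key step is identifying the plethystic alphabet attached to the $(i,j)$-entry of the Wick determinant. The trailing factor $e^{-H(B_\ell)}$ in ${}^{[\bbe]}\bra{\mu}$, together with the identity $e^{\pm H(X)}\ket{-\ell} = \ket{-\ell}$ (which follows because $a_k \ket{-\ell}=0$ for $k>0$), lets me shuffle this global factor freely across the pairing. After commuting each vertex operator past the appropriate fermions, the local dressing $B_{i-1}$ from the $i$-th factor of $\ket{\lambda}_{[\bal,\bbe]}$ combines plethystically with the $-B_{j-1}$ extracted from the $j$-th dressing on ${}^{[\bbe]}\bra{\mu}$ into $B_{i-1} - B_{j-1} = B_{[j,i)}$, while $A_{\lambda_i-1}$ is carried unchanged. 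The convolution identity $\sum_p h_p[X]\, h_{n-p}[Y] = h_n[X+Y]$ then assembles the $(i,j)$-entry as $h_{\lambda_i-\mu_j-i+j}(B_{[j,i)}/A_{\lambda_i-1})$, yielding the first determinant.

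To obtain the conjugate formula~\eqref{eq:dualb_det_conjugate}, I would replace $\ket{\lambda}_{[\bal,\bbe]}$ and ${}^{[\bbe]}\bra{\mu}$ by their conjugate-shape expressions from Lemma~\ref{lemma:conjugate_vectors} (applied with $\lambda \to \lambda'$, $\mu \to \mu'$), which rewrite the pairing in terms of $\psi^*$'s indexed by $\lambda'_i$ and $\psi$'s indexed by $\mu'_j$ anchored at $\ket{\ell'}$ and $\bra{\ell'}$. Wick's theorem then produces an $\ell' \times \ell'$ determinant with the roles of $A$ and $B$ interchanged in the entries, and the overall sign $(-1)^{\abs{\lambda}-\abs{\mu}}$ is picked up from the two $(-1)^{\abs{\cdot}}$ prefactors in Lemma~\ref{lemma:conjugate_vectors}. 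The two formulas~\eqref{eq:b_det} and~\eqref{eq:b_det_conjugate} for $B_\lambda^\mu$ follow by the dual argument: $\ket{\lambda}^{[\bal,\bbe]}$ and ${}_{[\bbe]}\bra{\mu}$ are built from $H^*$ rather than $H$, so applying~\eqref{eq:eHstar_relations} in place of~\eqref{eq:eH_relations} gives rise to $A_{\lambda_i}$ (rather than $A_{\lambda_i-1}$, matching the subscript/superscript distinction in the definitions) and reverses the sign of the exponent of $h$, reproducing the claimed determinants.

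The main obstacle is the bookkeeping in the second paragraph: tracing how the local dressings $B_{i-1}/A_{\lambda_i-1}$ of $\ket{\lambda}_{[\bal,\bbe]}$ combine with the layered dressings $B_{[j,\ell]}$ from ${}^{[\bbe]}\bra{\mu}$ and with the global $e^{-H(B_\ell)}$ so that exactly $B_{[j,i)}$ survives in each Wick pairing. Once this identification is verified, each determinant entry is a single plethystic convolution, consistent with the multi-Schur Jacobi--Trudi determinant noted in Remark~\ref{rem:multischur}.
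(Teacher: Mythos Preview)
Your overall strategy is the same as the paper's: realize each coefficient as a fermionic pairing and evaluate it by Wick's theorem, then use Lemma~\ref{lemma:conjugate_vectors} for the conjugate-shape formulas. However, your initial identification of the pairings is wrong. From Lemma~\ref{lemma:reduction} one has $\ket{\lambda}_{[\bal,\bbe]} = \sum_\mu b_\lambda^\mu \ket{\mu}_{[\bbe]}$, so extracting $b_\lambda^\mu$ requires pairing with the bra dual to $\ket{\mu}_{[\bbe]}$ under~\eqref{eq:refined_duality}, namely ${}_{[\bbe]}\bra{\mu}$, \emph{not} ${}^{[\bbe]}\bra{\mu}$. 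Likewise $B_\lambda^\mu = {}^{[\bbe]}\braket{\mu}{\lambda}^{[\bal,\bbe]}$, not ${}_{[\bbe]}\braket{\mu}{\lambda}^{[\bal,\bbe]}$. Recall the paper's convention that $\ast$ swaps the sub/superscript: ${}_{[\bbe]}\bra{\mu} = (\ket{\mu}^{[\bbe]})^*$ while ${}^{[\bbe]}\bra{\mu} = (\ket{\mu}_{[\bbe]})^*$.

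This is not merely cosmetic. The bra ${}^{[\bbe]}\bra{\mu}$ you name is built from $H^*$-dressings (since it is the $\ast$ of $\ket{\mu}_{[\bbe]}$, which uses $H$), and it carries no trailing $e^{-H(B_\ell)}$; pairing it with $\ket{\lambda}_{[\bal,\bbe]}$ would produce convolutions of the $h[X\ominus Y]$ type rather than the supersymmetric $h_{\lambda_i-\mu_j-i+j}(B_{[j,i)}/A_{\lambda_i-1})$ you claim. Your subsequent narrative (the trailing $e^{-H(B_\ell)}$, the layered dressings $B_{[j,\ell]}$, the cancellation $B_{i-1}-B_{j-1}=B_{[j,i)}$) in fact describes ${}_{[\bbe]}\bra{\mu}$, so you appear to have the right vector in mind but the wrong symbol. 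Once the pairings are corrected, your sketch matches the paper's proof line by line.
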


\begin{proof}
We only show~\eqref{eq:b_det} for $B_{\lambda}^{\mu}$ as the others are similar (see~\cite[Ex.~3.6]{Iwao23} with Remark~\ref{rem:multischur}).
From~\eqref{eq:refined_duality}, we can write
\[
\id = \sum_{\mu} \ket{\mu}_{[\bbe]} \cdot {}_{[\bbe]} \bra{\mu} = \sum_{\mu} \ket{\mu}^{[\bbe]} \cdot {}^{[\bbe]} \bra{\mu}.
\]
Hence, we have
\[
\ket{\lambda}^{[\bal,\bbe]} = \sum_{\mu} \ket{\mu}^{[\bbe]} \cdot {}^{[\bbe]} \braket{\mu}{\lambda}^{[\bal,\bbe]} = \sum_{\mu} B_{\lambda}^{\mu} \ket{\mu}^{[\bbe]},
\]
and pairing this with ${}^{[\bbe]} \bra{\mu}$, we see that $B_{\lambda}^{\mu} = {}^{[\bbe]} \braket{\mu}{\lambda}^{[\bal,\bbe]}$.
Next, note that for
\begin{align*}
P_i & = 
{
 e^{H^*(B_{[i,\ell]}/A_{(\lambda_i,\lambda_\ell]})} \psi_{\lambda_i-i} e^{-H^*(B_{[i,\ell]}/A_{(\lambda_i,\lambda_\ell]})}
 }
=
%e^{H^*(A_{\lambda_i})} e^{-H^*(B_{i-1})} \psi_{\lambda_i-i} e^{H^*(B_{i-1})} e^{-H^*(A_{\lambda_i})} =
\sum_{k=0}^{\infty} h_k(B_{[i,\ell]}/A_{(\lambda_i,\lambda_\ell]}) \psi_{\lambda_i-i+k},
\\
Q_j & = 
{
e^{-H^*(A_{\lambda_{\ell}}/B_{[j,\ell]})} \psi^*_{\mu_j-j} e^{H^*(A_{\lambda_{\ell}}/B_{[j,\ell]})} 
}
= \sum_{m=0}^{\infty} h_m(A_{\lambda_{\ell}}/B_{[j,\ell]}) \psi^*_{\mu_j-j-m},
\end{align*}
we apply Wick's theorem to obtain
\begin{align*}
{}^{[\bbe]} \braket{\mu}{\lambda}^{[\bal,\bbe]}
& = \bra{-\ell} Q_{\ell} \cdots Q_1 P_1 \cdots P_{\ell} \ket{-\ell}
\\ & = \det \bigl[ \bra{-\ell} Q_j P_i \ket{-\ell} \bigr]_{i,j=1}^{\ell}
\\ & = \det \left[ \sum_{k,m=0}^{\infty} h_m(A_{\lambda_{\ell}}/B_{[j,\ell]}) h_k(B_{[i,\ell]}/A_{(\lambda_i,\lambda_\ell]}) \bra{-\ell} \psi^*_{\mu_j-j-m} \psi_{\lambda_i-i+k} \ket{-\ell} \right]_{i,j=1}^{\ell}
\\ & = \det \left[ \sum_{k=0}^{\infty} h_{-\lambda_i+\mu_j+i-j-k}(A_{\lambda_{\ell}}/B_{[j,\ell]}) h_k(B_{[i,\ell]}/A_{(\lambda_i,\lambda_\ell]})  \right]_{i,j=1}^{\ell}
\\ & = \det \bigl[  h_{-\lambda_i+\mu_j+i-j}(A_{\lambda_i} /
B_{[j,i)}) \bigr]_{i,j=1}^{\ell}
\end{align*}
as desired.
\end{proof}

Lemma~\ref{lemma:reduction} implies that $b_{\lambda}^{\mu} = 0$ and $B_{\mu}^{\lambda} = 0$ whenever $\mu \not\subseteq \lambda$.

\begin{thm}
\label{thm:dual_basis}
We have
\[
{}_{[\bal,\bbe]}\braket{\mu}{\lambda}_{[\bal,\bbe]} = {}^{[\bal,\bbe]}\braket{\mu}{\lambda}^{[\bal,\bbe]} = \delta_{\lambda\mu}.
\]
\end{thm}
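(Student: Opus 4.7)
The plan is to reduce the two claimed equalities to a single identity via the anti-involution $\ast$, and then to invoke Lemma~\ref{lemma:reduction} to translate the pairing into the already-established $\bal=0$ setting using~\eqref{eq:refined_duality}.

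First I would observe that ${}^{[\bal,\bbe]}\braket{\mu}{\lambda}^{[\bal,\bbe]}={}_{[\bal,\bbe]}\braket{\lambda}{\mu}_{[\bal,\bbe]}$, using the identity $\braket{v^*}{w}=\braket{w^*}{v}$ (which is the $X=1$ case of $\bra{w}X\ket{v}=\bra{v^*}X^*\ket{w^*}$) together with the definitions $(\ket{\lambda}^{[\bal,\bbe]})^*={}_{[\bal,\bbe]}\bra{\lambda}$ and $(\ket{\mu}_{[\bal,\bbe]})^*={}^{[\bal,\bbe]}\bra{\mu}$. Hence it suffices to prove ${}_{[\bal,\bbe]}\braket{\mu}{\lambda}_{[\bal,\bbe]}=\delta_{\lambda\mu}$. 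I would next apply Lemma~\ref{lemma:reduction} to expand $\ket{\lambda}_{[\bal,\bbe]}=\sum_\nu b_\lambda^\nu\ket{\nu}_{[\bbe]}$ and $\ket{\mu}^{[\bal,\bbe]}=\sum_\rho B_\mu^\rho\ket{\rho}^{[\bbe]}$. Since $b_\lambda^\nu, B_\mu^\rho \in \ZZ[\bal,\bbe]$ are scalars fixed by the $\field$-linear anti-involution, applying $\ast$ to the second expansion and using $(\ket{\rho}^{[\bbe]})^*={}_{[\bbe]}\bra{\rho}$ yields ${}_{[\bal,\bbe]}\bra{\mu}=\sum_\rho B_\mu^\rho\,{}_{[\bbe]}\bra{\rho}$. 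Combining with~\eqref{eq:refined_duality} gives
\[
{}_{[\bal,\bbe]}\braket{\mu}{\lambda}_{[\bal,\bbe]}=\sum_\sigma B_\mu^\sigma\,b_\lambda^\sigma,
\]
with the sum supported on $\mu \subseteq \sigma \subseteq \lambda$ by the triangularity in Lemma~\ref{lemma:reduction}. The cases $\mu \not\subseteq \lambda$ (empty sum) and $\mu=\lambda$ (single term $B_\lambda^\lambda b_\lambda^\lambda = 1$) are immediate.

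For the remaining case $\mu \subsetneq \lambda$, my preferred approach is to bypass Lemma~\ref{lemma:reduction} and compute the pairing directly via Wick's theorem. Expanding each conjugated $\psi_k$ (respectively $\psi_k^*$) using~\eqref{eq:eH_commute} (respectively~\eqref{eq:seH_commute}), pushing the residual exponentials $e^{\pm H(\cdots)}$ outward so that they act trivially on $\ket{-\ell}$ and $\bra{-\ell}$ (using $a_k \ket{-\ell} = 0$ for $k > 0$), and applying Wick's theorem should collapse the pairing into a single Jacobi--Trudi-type determinant of the form $\det\bigl[h_{\lambda_j-\mu_i-j+i}(B_{[i,j)}+A_{\mu_i}/A_{\lambda_j-1})\bigr]_{i,j=1}^\ell$. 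For $\lambda=\mu$ this matrix is lower-triangular with unit diagonal, consistent with value $1$; for $\lambda \neq \mu$ I expect vanishing from plethystic cancellations in $A_{\mu_i}/A_{\lambda_j-1}$ that annihilate sufficiently many $h$-entries to force linear dependence among the rows.

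The main technical obstacle is establishing this vanishing of the determinant for $\mu \subsetneq \lambda$. Although small-case computations confirm the identity, a general proof requires careful bookkeeping of when $h_m$ of a plethystic difference vanishes (i.e., when the positive part has insufficient variables to support degree $m$ after cancellation against the negative part) and of the rank deficiency this induces in the matrix. An alternative route is a Cauchy--Binet-style expansion of $\sum_\sigma B_\mu^\sigma b_\lambda^\sigma$ using both formulas in Lemma~\ref{lemma:refined_expand_det}, which should likewise collapse the product of minors into an analogous single determinant amenable to the same structural analysis.
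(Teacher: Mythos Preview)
Your approach mirrors the paper's: reduce to one pairing via $\ast$, use Lemma~\ref{lemma:reduction} to restrict to $\mu\subseteq\lambda$, and compute the pairing directly by Wick's theorem as a single determinant. The setup is correct and your determinant (after relabeling $i\leftrightarrow j$) agrees with the paper's
\[
\det\bigl[h_{\lambda_i-\mu_j-i+j}\bigl(B_{[j,i)}/A_{(\mu_j,\lambda_i)}\bigr)\bigr]_{i,j=1}^{\ell}.
\]

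The gap is in the final step. You treat the vanishing for $\mu\subsetneq\lambda$ as an open ``technical obstacle'' requiring rank-deficiency or Cauchy--Binet arguments, but in fact the same triangularity you observed for $\lambda=\mu$ holds for every $\mu\subseteq\lambda$, and that alone finishes the proof. Concretely: for $i\leq j$ one has $\mu_j\leq\lambda_j\leq\lambda_i$ (using $\mu\subseteq\lambda$), so $B_{[j,i)}=\emptyset$ and the entry becomes
\[
h_m\bigl(\emptyset/A_{(\mu_j,\lambda_i)}\bigr)=(-1)^m e_m\bigl(A_{(\mu_j,\lambda_i)}\bigr),\qquad m=\lambda_i-\mu_j-i+j.
\]
The set $A_{(\mu_j,\lambda_i)}$ has only $\lambda_i-\mu_j-1$ elements, which is strictly less than $m$ when $i<j$; hence all strictly upper entries vanish. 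On the diagonal ($i=j$) the same count gives $e_{\lambda_i-\mu_i}$ in $\lambda_i-\mu_i-1$ variables, which is $0$ unless $\lambda_i=\mu_i$, in which case it is $h_0=1$. Thus the determinant is the product of the diagonal entries, namely $\prod_i\delta_{\lambda_i,\mu_i}=\delta_{\lambda\mu}$, with no further analysis needed.
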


\begin{proof}
Lemma~\ref{lemma:reduction} implies that
\begin{align*}
{}^{[\bal,\bbe]} \bra{\mu} & % = \sum_{i_1,\dotsc,i_{\ell-1}=0}^{\infty} {}^{[\bbe]} \bra{\mu - (i_1, \dotsc, i_{\ell-1})} \prod_{k=1}^{\ell-1} (-1)^{i_k} e_{i_k}(A_{[\lambda_{\ell},\lambda_k)})
= \sum_{\nu \subseteq \mu} {}^{[\bbe]} \bra{\nu} (b^*)_{\nu}^{\mu},
%----
& % \\
{}_{[\bal,\bbe]} \bra{\mu} & % = \sum_{i_1,\dotsc,i_{\ell-1}=0}^{\infty} {}_{[\bbe]} \bra{\mu + (i_1, \dotsc, i_{\ell-1})} \prod_{k=1}^{\ell-1} h_{i_k}(A_{(\lambda_{\ell},\lambda_k]})
= \sum_{\mu \subseteq \nu} {}_{[\bbe]} \bra{\nu} (B^*)_{\nu}^{\mu},
\end{align*}
by applying $\ast$.
This with~\eqref{eq:refined_duality} implies that it is sufficient to show the claim when either $\lambda \subseteq \mu$ for ${}^{[\bal,\bbe]}\braket{\mu}{\lambda}^{[\bal,\bbe]}$ or $\mu \subseteq \lambda$ for ${}_{[\bal,\bbe]}\braket{\mu}{\lambda}_{[\bal,\bbe]}$.

We note that it is sufficient to show ${}_{[\bal,\bbe]}\braket{\mu}{\lambda}_{[\bal,\bbe]} = \delta_{\lambda\mu}$ as the other equality is formed by applying $\ast$.
We can write the pairing using
\begin{align*}
P_i & = e^{-H(A_{\lambda_i-1})} e^{H(B_{i-1})} \psi_{\lambda_i-i} e^{-H(B_{i-1})} e^{H(A_{\lambda_i-1})} = \sum_{k=0}^{\infty} h_k(B_{i-1}/A_{\lambda_i-1}) \psi_{\lambda_i-i-k},
\\
Q_j & = e^{-H(A_{\mu_j})} e^{H(B_{j-1})} \psi^*_{\mu_j-j} e^{-H(B_{j-1})} e^{H(A_{\mu_j})} = \sum_{m=0}^{\infty} h_m(A_{\mu_j}/B_{j-1}) \psi^*_{\mu_j-j+m},
\end{align*}
and then we apply Wick's theorem to obtain
\begin{align*}
{}_{[\bal,\bbe]} \braket{\mu}{\lambda}_{[\bal,\bbe]}
& = \bra{-\ell} Q_{\ell} \cdots Q_1 P_1 \cdots P_{\ell} \ket{-\ell}
\\ & = \det \bigl[ \bra{-\ell} Q_j P_i \ket{-\ell} \bigr]_{i,j=1}^{\ell}
\\ & = \det \left[ \sum_{k,m=0}^{\infty} h_k(B_{i-1}/A_{\lambda_i-1}) h_m(A_{\mu_j}/B_{j-1}) \bra{-\ell} \psi^*_{\mu_j-j+m} \psi_{\lambda_i-i-k} \ket{-\ell} \right]_{i,j=1}^{\ell}
\\ & = \det \left[ \sum_{m=0}^{\infty} h_{\lambda_i-\mu_j-i+j-m}(B_{i-1}/A_{\lambda_i-1}) h_m(A_{\mu_j}/B_{j-1}) \right]_{i,j=1}^{\ell}
\\ & = \det \bigl[ h_{\lambda_i-\mu_j-i+j}((A_{\mu_j} \sqcup B_{i-1})/(A_{\lambda_i-1} \sqcup B_{j-1})) \bigr]_{i,j=1}^{\ell}.
\end{align*}

Now we need to examine the matrix
\begin{equation}
\label{eq:big_h_matrix}
\mcH = \bigl[ h_{\lambda_i-\mu_j-i+j}(B_{[j,i)} / A_{(\mu_j, \lambda_i)}) \bigr]_{i,j=1}^{\ell}.
\end{equation}
If $i \leq j$, then we have $\lambda_i \geq \lambda_j \geq \mu_j$, and we set $m = \lambda_i-\mu_j-i+j$.
Thus, we have
\[
h_m(B_{[j,i)} / A_{(\mu_j, \lambda_i)}) = (-1)^m e_m(A_{(\mu_j, \lambda_i)} \sqcup B_{[i,j)}),
\]
which if $i < j$, this is equal to $0$ since there are only $\lambda_i - \mu_j - 1 + j - i < m$ variables.
Thus, the matrix $\mcH$ is triangular, so the determinant is equal to the product of the diagonal entries.
Hence, the determinant equals $\delta_{\lambda\mu}$ as desired.
\end{proof}

\begin{remark}
\label{rem:similar_proof}
Our proof is essentially the same as~\cite[Thm.~3.5]{HJKSS21} in the language of free-fermions.
Note that the matrix $\mcH$ in~\eqref{eq:big_h_matrix} is the matrix of~\cite[Eq.~(3.10)]{HJKSS21}.
The Cauchy--Binet analog~\cite[Lemma~3.1]{HJKSS21} is being played by Wick's theorem.
The reduction using~\cite[Lemma~2.7]{HJKSS21} can be given by the analog of~(III) shown in the proof~\cite[Thm~3.2]{Iwao23} (with a similar proof), but as our proof indicates this step is not necessary.
\end{remark}

%=====================================================================
\section{Grothendieck polynomials and algebraic formulas}
\label{sec:results}

In this section, we show our main result (Theorem~\ref{thm:jacobi_trudi}, Corollary~\ref{cor:free_fermion_grothendiecks}): the (skew) (dual) canonical Grothendieck polynomials defined in~\cite{HJKSS21} (we will drop the word ``refined'' here for simplicity) can be given as matrix elements using the fermionic Fock space.
From this description, we give a number of algebraic identities as a consequence of the fermionic Fock space.

%%%%%%%%%%%%%%%%%%%%
\subsection{Jacobi--Trudi formulas}

We show our main result by applying Wick's theorem and showing that the resulting determinants are equal to the Jacobi--Trudi formulas from~\cite[Thm.~1.7]{HJKSS21}.

\begin{thm}[Jacobi--Trudi formulas]
\label{thm:jacobi_trudi}
For $\ell \geq \ell(\lambda)$ and $\ell' \geq \lambda_1$, we have
\begin{align*}
{}^{[[\bal,\bbe]]} \bra{\mu} e^{H(\xx_n)} \ket{\lambda}^{[\bal,\bbe]} & = C \det \bigl[ h_{\lambda_i-\mu_j-i+j}\bigl(\xx_n \ds (A_{(\mu_j,\lambda_i]} \sqcup B_{[i,j]}) \bigr) \bigr]_{i,j=1}^{\ell}
\\ & = C' \det \bigl[ e_{\lambda'_i-\mu'_j-i+j}\bigl(\xx_n \ds (A_{(j,i)} \sqcup B_{(\lambda'_i,\mu'_j]}) \bigr) \bigr]_{i,j=1}^{\ell'},
\allowdisplaybreaks
\\ {}^{[\bal,\bbe]} \bra{\mu} e^{H(\xx_n)} \ket{\lambda}^{[\bal,\bbe]} & = C \det \bigl[ h_{\lambda_i-\mu_j-i+j}\bigl(\xx_n \ds (A_{[\mu_j,\lambda_i]} \sqcup B_{[i,j)}) \bigr) \bigr]_{i,j=1}^{\ell}
\\ & = C' \det \bigl[ e_{\lambda'_i-\mu'_j-i+j}\bigl(\xx_n \ds (A_{[j,i)} \sqcup B_{(\lambda'_i,\mu'_j)}) \bigr) \bigr]_{i,j=1}^{\ell'},
\allowdisplaybreaks
\\ {}_{[\bal,\bbe]} \bra{\mu} e^{H(\xx_n)} \ket{\lambda}_{[\bal,\bbe]} & = \det \bigl[ h_{\lambda_i-\mu_j-i+j}(\xx_n \sqcup A_{[\lambda_i,\mu_j]} \sqcup B_{[j,i)}) \bigr]_{i,j=1}^{\ell}
\\ & = \det \bigl[ e_{\lambda'_i-\mu'_j-i+j}(\xx_n \sqcup A_{[i,j)} \sqcup B_{(\mu'_j,\lambda'_i)}) \bigr]_{i,j=1}^{\ell'},
\end{align*}
where
\[
C = \prod_{i=1}^{\ell} \prod_{j=1}^n (1 - \beta_i x_j),
\qquad\qquad
C' = \prod_{i=1}^{\ell'} \prod_{j=1}^n (1 + \alpha_i x_j)^{-1}.
\]
\end{thm}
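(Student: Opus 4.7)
The plan is to apply Wick's theorem (Theorem~\ref{thm:Wick}) after using the commutation relations \eqref{eq:eH_relations} and \eqref{eq:eHstar_relations} to push $e^{H(\xx_n)}$ through the canonical vectors until it is absorbed into $\ket{-\ell}$ (or $\bra{-\ell}$). The strategy is uniform across all three pairings and is modeled on the computation in the proof of Theorem~\ref{thm:dual_basis}, but now with an extra $e^{H(\xx_n)}$ in the middle.

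Consider first the simplest pairing ${}_{[\bal,\bbe]}\bra{\mu} e^{H(\xx_n)} \ket{\lambda}_{[\bal,\bbe]}$. Both vectors are built from $e^{\pm H(\gamma)}$-conjugations (no $H^*$), which commute with $e^{H(\xx_n)}$ by \eqref{eq:boson_relation}. Absorbing $e^{H(\xx_n)}$ through the $H$-exponentials and applying \eqref{eq:eH_commute} shows that one may rewrite the matrix element as $\bra{-\ell}Q_\ell\cdots Q_1 P_1\cdots P_\ell\ket{-\ell}$ with
\[
P_i = \sum_{k\geq 0} h_k\bigl(\xx_n \sqcup B_{i-1} / A_{\lambda_i-1}\bigr)\psi_{\lambda_i-i-k}, \qquad Q_j = \sum_{m\geq 0} h_m(A_{\mu_j}/B_{j-1})\psi^*_{\mu_j-j+m}.
\]
Wick's theorem reduces this to $\det\bigl[\bra{-\ell}Q_j P_i\ket{-\ell}\bigr]$. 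Evaluating each two-point function with $\bra{-\ell}\psi^*_a\psi_b\ket{-\ell}=\delta_{ab}$ and using $\sum_{k}h_k(U/V)h_{k+m}(V'/U')=h_m((U\sqcup V')/(V\sqcup U'))$ collapses the entry to $h_{\lambda_i-\mu_j-i+j}(\xx_n\sqcup A_{[\lambda_i,\mu_j]}\sqcup B_{[j,i)})$, matching the claim.

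For the other two pairings ${}^{[\bal,\bbe]}\bra{\mu} e^{H(\xx_n)}\ket{\lambda}^{[\bal,\bbe]}$ and ${}^{[[\bal,\bbe]]}\bra{\mu}e^{H(\xx_n)}\ket{\lambda}^{[\bal,\bbe]}$, the vectors are assembled from $e^{\pm H^*(\gamma)}$-conjugations, so commuting $e^{H(\xx_n)}$ past each $e^{\pm H^*(\gamma)}$ produces a scalar via the bosonic CCR, namely $e^{H(\xx_n)}e^{\pm H^*(\gamma)}=\bigl(\prod_{j=1}^n(1-\gamma x_j)^{\mp 1}\bigr)e^{\pm H^*(\gamma)}e^{H(\xx_n)}$. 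The $\alpha$-contributions telescope because each $e^{H^*(A_{\cdot})}$ comes paired with an $e^{-H^*(A_{\cdot})}$ inside the definition of the vectors, leaving only the $\beta$-contributions which collect to the constant $C=\prod_{i,j}(1-\beta_ix_j)$. After this, $e^{H(\xx_n)}$ reaches $\ket{-\ell}$ and disappears, and the resulting matrix element is of the same form as above but with the variable sets shifted according to the $H^*$-conjugations prescribed by the definitions. Wick's theorem followed by the same Cauchy-type sum over $k$ yields the stated $h$-determinants in the variable sets $A_{[\mu_j,\lambda_i]}\sqcup B_{[i,j)}$ or $A_{(\mu_j,\lambda_i]}\sqcup B_{[i,j]}$, respectively, with the prefactor $C$.

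Finally, the three $e$-versions are obtained by applying Lemma~\ref{lemma:conjugate_vectors} to rewrite the vectors as products involving $\psi^*$-operators indexed by $\lambda'$ and $\mu'$. Wick's theorem now pairs the operators in the opposite orientation (i.e., the two-point functions come from $\bra{\ell}\psi_a\psi^*_b\ket{\ell}$), and the dual commutation relations \eqref{eq:eHstar_commute}--\eqref{eq:seHstar_commute} produce $e$-polynomial coefficients instead of $h$-polynomials, after which the same Cauchy-type telescoping yields the $e$-determinants. The scalar $C'=\prod_{i,j}(1+\alpha_ix_j)^{-1}$ now arises in place of $C$ because the roles of $\alpha$ and $\beta$ are interchanged (the $\alpha$-scalars no longer telescope, while the $\beta$-scalars do), which is consistent with the symmetry \eqref{eq:symmetries}.

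The only real obstacle is bookkeeping: carefully tracking the many variable ranges $A_{[i,j)},A_{(i,j]},B_{[i,j)},B_{[i,j]}$ produced by the telescoping of $e^{\pm H^*(\cdot)}$ factors and matching them to the indices appearing in the Jacobi--Trudi formulas of~\cite[Thm.~1.7]{HJKSS21}. Once the bookkeeping is in place, the argument is a mechanical application of Wick's theorem together with the CCR, and no new ideas are needed beyond those already used in the proof of Theorem~\ref{thm:dual_basis}.
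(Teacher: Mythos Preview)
Your proposal is correct and follows essentially the same approach as the paper's own proof: push $e^{H(\xx_n)}$ through the $H^*$-exponentials to extract the scalar prefactor, then apply Wick's theorem exactly as in the proof of Theorem~\ref{thm:dual_basis}, and use Lemma~\ref{lemma:conjugate_vectors} for the $e$-versions. The only cosmetic difference is that the paper, rather than commuting $e^{H(\xx_n)}$ past each individual $e^{\pm H^*(\gamma)}$ and arguing that the $\alpha$-scalars telescope, first rewrites the matrix element as $\bra{-\ell}Q_\ell\cdots Q_1\, M\, e^{H(\xx_n)}\, M^{-1}\, P_1\cdots P_\ell\ket{-\ell}$ with a single ``imbalance'' operator $M=e^{H^*(B_\ell)}$ (respectively $M=e^{H(B_\ell)}$), so that the entire prefactor $C$ (respectively $1$) comes from commuting $M$ past $e^{H(\xx_n)}$ in one step via~\eqref{eq:conjugate_op_star}, while the remaining balanced conjugations $P_i,Q_j$ let $e^{H(\xx_n)}$ pass for free.
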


\begin{proof}
We first rewrite our matrix elements using Wick's theorem in the following form
\[
\bra{-\ell} Q_{\ell} \cdots Q_1 M e^{H(\xx_n)} M^{-1} P_1 \cdots P_{\ell} \ket{-\ell} = C_M \det \left( \bra{-\ell} Q_j e^{H(\xx_n)} P_i \ket{-\ell} \right)_{i,j=1}^{\ell},
\]
where $Me^{H(\xx_n)}M^{-1} = C_Me^{H(\xx_n)}$, for the $h$ versions and similarly for the dual $e$ versions but using the vectors from Lemma~\ref{lemma:conjugate_vectors}.
The rest of the proof is similar to the computation for the proof of Theorem~\ref{thm:dual_basis}.

For ${}^{[[\bal,\bbe]]} \bra{\mu} e^{H(\xx_n)} \ket{\lambda}^{[\bal,\bbe]}$, we use $M = e^{H^*(B_\ell)}$ and
\begin{align*}
P_i & = e^{H^*(A_{\lambda_i})} e^{H^*(B_{[i,\ell]})} \psi_{\lambda_i-i} e^{-H^*(B_{[i,\ell]})} e^{-H^*(A_{\lambda_i})},
\\
Q_j & = e^{H^*(A_{\mu_j})} e^{H^*(B_{(j,\ell]})} \psi^*_{\mu_j-j} e^{-H^*(B_{(j,\ell]})} e^{-H^*(A_{\mu_j})}.
\end{align*}

For ${}^{[\bal,\bbe]} \bra{\mu} e^{H(\xx_n)} \ket{\lambda}^{[\bal,\bbe]}$, we use $M = e^{H^*(B_\ell)}$ and
\begin{align*}
P_i & = e^{H^*(A_{\lambda_i})} e^{H^*(B_{[i,\ell]})} \psi_{\lambda_i-i} e^{-H^*(B_{[i,\ell]})} e^{-H^*(A_{\lambda_i})},
\\
Q_j & = e^{H^*(A_{\mu_j-1})} e^{H^*(B_{[j,\ell]})} \psi^*_{\mu_j-j} e^{-H^*(B_{[j,\ell]})} e^{-H^*(A_{\mu_j-1})}.
\end{align*}

For ${}_{[\bal,\bbe]} \bra{\mu} e^{H(\xx_n)} \ket{\lambda}_{[\bal,\bbe]}$, we use $M = e^{H(B_\ell)}$ and
\begin{align*}
P_i & = e^{-H(A_{\lambda_i-1})} e^{-H(B_{[i,\ell]})} \psi_{\lambda_i-i} e^{H(B_{[i,\ell]})} e^{H(A_{\lambda_i-1})},
\\
Q_j & = e^{-H(A_{\mu_j})} e^{-H(B_{(j,\ell]})} \psi^*_{\mu_j-j} e^{H(B_{(j,\ell]})} e^{H(A_{\mu_j})}.
\end{align*}

To compute $C_M$, for $\G_{\lambda/\mu}(\xx_n; \bal, \bbe)$ and $\G_{\lambda\ds\mu}(\xx_n; \bal, \bbe)$ we use
\begin{equation}
\label{eq:conjugate_op_star}
e^{H^*(B_\ell)} e^{H(\xx)} e^{-H^*(B_\ell)} = \prod_{j=1}^\ell \prod_{k=1}^{\infty} (1-\beta_j x_k) e^{H(\xx)},
\end{equation}
and we use $e^{H(B_\ell)} e^{H(\xx_n)} e^{-H(B_\ell)}=e^{H(\xx_n)}$ for $\dG_{\lambda/\mu}(\xx_n; \bal, \bbe)$.
\end{proof}

\begin{dfn}
We define the (skew) (refined) canonical Grothendieck functions as
\begin{subequations}
\label{eq:grothendieck_defn}
\begin{align}
\G_{\lambda/\mu}(\xx_n; \bal, \bbe) & := {}^{[[\bal,\bbe]]} \bra{\mu} e^{H(\xx_n)} \ket{\lambda}^{[\bal,\bbe]},
\\
\dG_{\lambda/\mu}(\xx_n; \bal, \bbe) & :={}_{[\bal,\bbe]} \bra{\mu} e^{H(\xx_n)} \ket{\lambda}_{[\bal,\bbe]},
\\
\G_{\lambda\ds\mu}(\xx_n; \bal, \bbe) & := {}^{[\bal,\bbe]} \bra{\mu} e^{H(\xx_n)} \ket{\lambda}^{[\bal,\bbe]}.
\end{align}
\end{subequations}
\end{dfn}

%We are using~\cite[Thm.~1.7]{HJKSS21} to \emph{define} the the (skew) (refined) canonical Grothendieck polynomial $\G_{\lambda/\mu}(\xx_n; \bal, \bbe)$ and its dual version $\dG_{\lambda/\mu}(\xx_n; \bal, \bbe)$.
Theorem~\ref{thm:jacobi_trudi} can be rephrased as follows.

\begin{cor}
\label{cor:free_fermion_grothendiecks}
The symmetric functions $\G_{\lambda/\mu}(\xx_n; \bal, \bbe)$ and $\dG_{\lambda/\mu}(\xx_n; \bal, \bbe)$ are equal to those in~\cite[Thm.~1.7]{HJKSS21}.
\end{cor}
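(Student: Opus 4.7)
The plan is to observe that this corollary is essentially a matter of comparing the determinant formulas of Theorem~\ref{thm:jacobi_trudi} with the Jacobi--Trudi formulas in~\cite[Thm.~1.7]{HJKSS21}. In~\cite{HJKSS21}, the (skew refined) canonical Grothendieck function $\G_{\lambda/\mu}(\xx;\bal,\bbe)$ and its dual $\dG_{\lambda/\mu}(\xx;\bal,\bbe)$ are \emph{defined} by Jacobi--Trudi-type determinants written using the plethystic $\ominus$ operator on the parameter alphabets $A_{[i,j]}$ and $B_{[i,j]}$ (together with the variables $\xx$). Our proposed definitions in~\eqref{eq:grothendieck_defn} are fermionic matrix elements, and Theorem~\ref{thm:jacobi_trudi} has already converted each of those matrix elements into a determinant via Wick's theorem.

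So the body of the proof is essentially a dictionary. First, I would recall the precise statements of the Jacobi--Trudi formulas in~\cite[Thm.~1.7]{HJKSS21}, in both their $h$-form and their $e$-form, and translate the plethystic notation $f[X \ominus Y]$ into the equivalent $f(\xx \ds \yy)$ used here. Second, I would match the index shifts and the alphabet sets entering each determinant: the $(i,j)$-entry of the $h$-type Jacobi--Trudi determinant in~\cite{HJKSS21} for $\G_{\lambda/\mu}$ involves $h_{\lambda_i-\mu_j-i+j}$ applied to $\xx$ minus an $\bal,\bbe$ alphabet that depends on $(i,j)$; a direct comparison with Theorem~\ref{thm:jacobi_trudi} shows the alphabets are exactly $A_{(\mu_j,\lambda_i]} \sqcup B_{[i,j]}$ (and analogously for $\G_{\lambda\ds\mu}$ and $\dG_{\lambda/\mu}$). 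The prefactor $C = \prod_{i,j}(1-\beta_i x_j)$ in the $\G$ cases and the absence of a prefactor in the $\dG$ case agree with~\cite{HJKSS21} after accounting for~\eqref{eq:conjugate_op_star}.

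Once this bookkeeping is done, the conclusion is immediate: both sides are given by the same determinant, so the fermionic matrix elements of~\eqref{eq:grothendieck_defn} coincide with the Hwang--Jang--Kim--Song--Song functions. The dual $e$-form determinants in Theorem~\ref{thm:jacobi_trudi} likewise match the $e$-version of the Jacobi--Trudi formula in~\cite[Thm.~1.7]{HJKSS21}, and this gives a second consistency check as well as accounting for the alternative presentations.

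The only nontrivial point, and the step that deserves care, is the alphabet bookkeeping: the range conventions for $A_{[i,j)}$, $A_{(i,j]}$, etc. used here must be aligned with the conventions in~\cite{HJKSS21}, and one must verify that the sign/multiplicative prefactors $C$ and $C'$ in Theorem~\ref{thm:jacobi_trudi} produce exactly the Jacobi--Trudi formulas without any extra normalization. Once the dictionary is set up correctly, there is no further computation to do; the identification is entry-by-entry.
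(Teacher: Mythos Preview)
Your proposal is correct and matches the paper's approach exactly: the corollary is stated immediately after Theorem~\ref{thm:jacobi_trudi} with no proof beyond the remark that it ``can be rephrased as follows,'' so the content is precisely the entry-by-entry identification of the Jacobi--Trudi determinants in Theorem~\ref{thm:jacobi_trudi} with those of~\cite[Thm.~1.7]{HJKSS21}. Your emphasis on the alphabet bookkeeping and the prefactors $C$, $C'$ is appropriate, as that is the only place where a mismatch could arise.
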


Note that the middle two determinants in Theorem~\ref{thm:jacobi_trudi} do not appear in~\cite{HJKSS21}.

Equation~\eqref{eq:corner_decomposition} gives the refined analog of $G_{\lambda\ds\mu}(\xx_n; 0, -1)$ in~\cite[Eq.~(6.4)]{Buch02} and $\G_{\lambda\ds\mu}(\xx; \alpha, \beta)$ in~\cite[Prop.~8.8]{Yel17}, which is also the generalization of~\cite[Prop.~4.7]{Iwao20}:
\[
\G_{\lambda\ds\mu}(\xx; \bal, \bbe) = \sum_{\nu} \left( \prod_{(i,j) \in \mu/\nu} -(\alpha_i + \beta_j)  \right) \G_{\lambda/\nu}(\xx; \bal, \bbe),
\]
where $\nu\subseteq \mu$ is formed by removing some of the corners of $\mu$.
Subsequently, Equation~\eqref{eq:inverse_corner} yields the generalization of~\cite[Eq.~(7.4)]{Buch02} (which is related to the previous formula by M\"obius inversion):
\[
\G_{\lambda/\mu}(\xx; \bal, \bbe) = \sum_{\nu \subseteq \mu} \left( \prod_{(i,j) \in \mu/\nu} (\alpha_i + \beta_j)  \right) \G_{\lambda\ds\nu}(\xx; \bal, \bbe).
\]

We require the reformulations
\[
\G_{\lambda \ds \mu}(\xx; \bal, \bbe) = {}_{[\bal,\bbe]} \bra{\lambda} e^{H^*(\xx)} \ket{\mu}_{[\bal,\bbe]},
\qquad\qquad
\dG_{\lambda / \mu}(\xx; \bal, \bbe) = {}^{[\bal,\bbe]} \bra{\lambda} e^{H^*(\xx)} \ket{\mu}^{[\bal,\bbe]},
\]
by applying $\ast$ to our formulas in the sequel.

We can obtain a Jacobi--Trudi formula for $\omega \G_{\lambda/\mu}(\xx; \bal, \bbe)$ (Theorem~\ref{thm:canonical_conjugate}) by applying $\omega$ to each entry of the matrix since $\omega$ is an algebra morphism.
By a direct comparison, we have the following.

\begin{thm}[{\cite[Thm.~1.8]{HJKSS21}}]
\label{thm:canonical_conjugate}
We have
\[
\omega \G_{\lambda/\mu}(\xx; \bal, \bbe) = \G_{\lambda'/\mu'}(\xx; \bbe, \bal),
\qquad\qquad
\omega \dG_{\lambda/\mu}(\xx; \bal, \bbe) = \dG_{\lambda'/\mu'}(\xx; \bbe, \bal).
\]
\end{thm}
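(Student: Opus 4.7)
The plan is to apply the involution $\omega$ directly to the $h$-form Jacobi--Trudi formula in Theorem~\ref{thm:jacobi_trudi} and check that the result coincides with the $e$-form Jacobi--Trudi for $\G_{\lambda'/\mu'}(\xx; \bbe, \bal)$ after swapping $\bal \leftrightarrow \bbe$ and transposing the shapes. Since $\omega$ is an algebra morphism on $\Lambda$ acting only on the symmetric functions of $\xx$ (with the parameters $\bal, \bbe$ treated as elements of the ground ring), it distributes over the scalar prefactor and through the determinant entrywise.

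First, I would apply $\omega$ to the prefactor $C = \prod_{i=1}^{\ell} \prod_{j=1}^{n}(1-\beta_i x_j)$. The identity $\omega\bigl(\prod_j (1-\beta x_j)\bigr) = \prod_j(1+\beta x_j)^{-1}$ follows at once from $\omega$ interchanging the generating series $\prod_j(1+x_j t)$ and $\prod_j(1-x_j t)^{-1}$ of $e_m$ and $h_m$. The result $\prod_{i,j}(1+\beta_i x_j)^{-1}$ matches, after the swap $\bal \leftrightarrow \bbe$, the prefactor $C'$ appearing in the $e$-form Jacobi--Trudi for $\G_{\lambda'/\mu'}(\xx; \bbe, \bal)$. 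Next, I would apply $\omega$ entrywise; using $\omega h_a(\xx) = e_a(\xx)$ and the expansion $h_m(\xx \ds X) = \sum_{a-b=m} h_a(\xx) h_b(X)$, each entry becomes $\sum_{a-b=m} e_a(\xx)\, h_b(X)$.

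To match this with the target entry $e_m(\xx \ds Y) = \sum_{a-b=m} e_a(\xx)\, e_b(Y)$, I would invoke the elementary supersymmetric identity $h_b(X_1/X_2) = e_b(X_2/X_1)$ together with the convention $A_{[i,j)} = \emptyset / A_{[j,i)}$ recorded in Section~\ref{sec:canonical_fermions}. Under the swap $\bal \leftrightarrow \bbe$ and the conjugation $\lambda \leftrightarrow \lambda'$, $\mu \leftrightarrow \mu'$, the $(i,j)$-index of the size-$\ell$ determinant corresponds to the $(j,i)$-type index of the size-$\ell'$ determinant on the conjugate side, and the set $A_{(\mu_j,\lambda_i]} \sqcup B_{[i,j]}$ becomes precisely the supersymmetrically-swapped counterpart of the set $\tilde A_{(j,i)} \sqcup \tilde B_{(\lambda_i,\mu_j]}$ appearing in the $e$-form of $\G_{\lambda'/\mu'}(\xx; \bbe, \bal)$.

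The main obstacle will be the careful bookkeeping of the index ranges and sign conventions in the $\ds$ and $A_{[\cdot,\cdot)}, B_{(\cdot,\cdot]}$ notation, particularly when one or both of the subscripts are ``reversed'' so that the convention $A_{[i,j)} = \emptyset / A_{[j,i)}$ kicks in. Once this matching is verified, the determinants agree term by term, and combined with the scalar factors this proves the first equality. The dual case $\omega \dG_{\lambda/\mu}(\xx; \bal, \bbe) = \dG_{\lambda'/\mu'}(\xx; \bbe, \bal)$ is handled by the same method applied to the third Jacobi--Trudi formula in Theorem~\ref{thm:jacobi_trudi}, where the scalar prefactors are trivial and only the entrywise matching needs to be checked.
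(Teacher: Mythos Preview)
Your proposal is correct and follows essentially the same approach as the paper: apply $\omega$ entrywise to the $h$-form Jacobi--Trudi determinant (using that $\omega$ is an algebra morphism) and directly compare with the $e$-form Jacobi--Trudi for the conjugate shape with $\bal \leftrightarrow \bbe$. The paper states this in a single sentence without spelling out the prefactor and index bookkeeping you describe, but the method is identical.
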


We remark that Theorem~\ref{thm:canonical_conjugate} can be reformulated as
\[
\G_{\lambda'/\mu'}(\xx; \bbe, \bal) = {}^{[[\bal,\bbe]]} \bra{\mu} e^{J(\xx)} \ket{\lambda}^{[\bal,\bbe]},
\qquad\qquad
\dG_{\lambda'/\mu'}(\xx; \bbe, \bal) ={}_{[\bal,\bbe]} \bra{\mu} e^{J(\xx)} \ket{\lambda}_{[\bal,\bbe]}.
\]

%%%%%%%%%%%%%%%%%%%%
\subsection{Skew Schur expansions}

We obtain~\cite[Thm.~8.7, 8.8]{HJKSS21} by using the identity operator and Wick's theorem.
Like Remark~\ref{rem:similar_proof}, this is fundamentally the same proof in the language of free-fermions.
However, we obtain slightly different formulas than~\cite[Thm.~8.2, 8.3]{HJKSS21} for $\G_{\lambda/\mu}(\xx; \bal, \bbe)$.
We also have the similar result for $\G_{\lambda\ds\mu}(\xx; \bal, \bbe)$, which has not been stated previously in the literature.
In particular, we compute the following matrix elements.

%In order to make the indexing match the usual covariant and contravariant indexing, we will denote $M^* = [(M^*)_{\mu}^{\lambda} := (M_{\lambda}^{\mu})^*]_{\mu,\lambda\in \mcP}$ for a matrix $M = [M_{\lambda}^{\mu}]_{\lambda,\mu \in \mcP}$.
%This agrees with thinking of $\ast$ as the transpose conjugate and the $\ast$ duality operator.
%since for $M_{\mu}^{\lambda} = \bra{\mu} X \ket{\lambda}$ defining the linear operator
%$
%M = \ket{\mu} \bra{\mu} X \ket{\lambda} \bra{\lambda} = \ket{\mu} M_{\mu}^{\lambda} \bra{\lambda}
%$
%yields
%\begin{align*}
%M^* = (\ket{\mu} M_{\mu}^{\lambda} \bra{\lambda})^* = \bigl( \ket{\mu} \bra{\mu} X \ket{\lambda} \bra{\lambda} \bigr)^*
%= \ket{\lambda} \bra{\lambda} X^* \ket{\mu} \bra{\mu} = \ket{\lambda} (M_{\mu}^{\lambda})^* \bra{\mu} = \ket{\lambda} (M^*)^{\mu}_{\lambda} \bra{\mu}.
%\end{align*}

\begin{thm}
\label{thm:matrix_elements}
Let $\ell \geq \ell(\lambda)$.
We have
\begin{align*}
&\G_{\lambda/\mu}(\xx; \bal, \bbe)  = \sum_{\eta \subseteq \mu \subseteq \lambda \subseteq \nu} \mcD_{\eta}^{\mu}(\bal, \bbe) s_{\nu/\eta}(\xx) \mcI_{\lambda}^{\nu}(\bal, \bbe),
\allowdisplaybreaks \\	
&\G_{\lambda'/\mu'}(\xx; \bal, \bbe)  = \sum_{\eta \subseteq \mu \subseteq \lambda \subseteq \nu} \widetilde{\mcD}_{\eta}^{\mu}(\bal, \bbe) s_{\nu'/\eta'}(\xx) \widetilde{\mcI}_{\lambda}^{\nu}(\bal, \bbe),
\allowdisplaybreaks \\
&\G_{\lambda\ds\mu}(\xx; \bal, \bbe)  = \sum_{\eta \subseteq \mu \subseteq \lambda \subseteq \nu} (\mcE^*)_{\eta}^{\mu}(\bal, \bbe) s_{\nu/\eta}(\xx) \mcI_{\lambda}^{\nu}(\bal, \bbe),
\allowdisplaybreaks \\	
&\G_{\lambda'\ds\mu'}(\xx; \bal, \bbe)  = \sum_{\eta \subseteq \mu \subseteq \lambda \subseteq \nu} (\widetilde{\mcE}^*)_{\eta}^{\mu}(\bal, \bbe) s_{\nu'/\eta'}(\xx) \widetilde{\mcI}_{\lambda}^{\nu}(\bal, \bbe),
\allowdisplaybreaks \\
&\dG_{\lambda/\mu}(\xx; \bal, \bbe)  = \sum_{\mu \subseteq \eta \subseteq \nu \subseteq \lambda} (\mcI^*)_{\eta}^{\mu}(\bal, \bbe) s_{\nu/\eta}(\xx) \mcE^{\nu}_{\lambda}(\bal, \bbe),
\allowdisplaybreaks \\
&\dG_{\lambda'/\mu'}(\xx; \bal, \bbe)  = \sum_{\mu \subseteq \eta \subseteq \nu \subseteq \lambda} (\widetilde{\mcI}^*)_{\eta}^{\mu}(\bal, \bbe) s_{\nu'/\eta'}(\xx) \widetilde{\mcE}_{\lambda}^{\nu}(\bal, \bbe),
\end{align*}
where, for $\ell' \geq \ell(\nu)$,
\begin{align*}
\mcI_{\lambda}^{\nu}(\bal, \bbe) & = \det \bigl[ h_{\nu_i - \lambda_j - i + j}(A_{\lambda_j} / B_{j-1}) \bigr]_{i,j=1}^{\ell'},
&
\widetilde{\mcI}_{\lambda}^{\nu}(\bal, \bbe) & = \det \bigl[ e_{\nu_i - \lambda_j - i + j}(A_{j-1} / B_{\lambda_j}) \bigr]_{i,j=1}^{\ell'},
\\
\mcD_{\eta}^{\mu}(\bal, \bbe) & =  \det \bigl[ h_{\mu_i - \eta_j - i + j}(B_i / A_{\mu_i}) \bigr]_{i,j=1}^{\ell},
&
\widetilde{\mcD}_{\eta}^{\mu}(\bal, \bbe) & =  \det \bigl[ e_{\mu_i - \eta_j - i + j}(B_{\mu_i} / A_i) \bigr]_{i,j=1}^{\ell'},
\\
\mcE^{\nu}_{\lambda}(\bal, \bbe) & = \det \bigl[ h_{\lambda_i - \nu_j - i + j}(B_{i-1} / A_{\lambda_i-1}) \bigr]_{i,j=1}^{\ell},
&
\widetilde{\mcE}_{\lambda}^{\nu}(\bal, \bbe) & = \det \bigl[ e_{\lambda_i - \nu_j - i + j}(B_{\lambda_i-1} / A_{i-1}) \bigr]_{i,j=1}^{\ell}.
\end{align*}
\end{thm}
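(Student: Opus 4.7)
My plan is to insert the resolution of identity on $\mcF^0$ (recorded in the remark after Theorem~\ref{thm:schur_jacobi_trudi}),
\[
\id_{\mcF^0}=\sum_{\eta\in\mcP}\ket{\eta}\cdot\bra{\eta},
\]
on both sides of $e^{H(\xx)}$ in each of the three free-fermion presentations~\eqref{eq:grothendieck_defn}, and then identify the resulting outer brackets as the stated determinants by Wick's theorem, in direct analogy with Lemma~\ref{lemma:refined_expand_det} and the proof of Theorem~\ref{thm:dual_basis}. For instance, for $\G_{\lambda/\mu}(\xx;\bal,\bbe)={}^{[[\bal,\bbe]]}\bra{\mu}e^{H(\xx)}\ket{\lambda}^{[\bal,\bbe]}$, this yields
\[
\G_{\lambda/\mu}(\xx;\bal,\bbe)=\sum_{\eta,\nu\in\mcP}{}^{[[\bal,\bbe]]}\braket{\mu}{\eta}\cdot s_{\nu/\eta}(\xx)\cdot\braket{\nu}{\lambda}^{[\bal,\bbe]},
\]
using Theorem~\ref{thm:schur_jacobi_trudi} for the middle factor; the cases of $\G_{\lambda\ds\mu}$ and $\dG_{\lambda/\mu}$ are handled identically. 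The outer coefficients ${}^{[\bal,\bbe]}\braket{\mu}{\eta}$ and ${}_{[\bal,\bbe]}\braket{\mu}{\eta}$ reduce to $\braket{\eta}{\mu}_{[\bal,\bbe]}$ and $\braket{\eta}{\mu}^{[\bal,\bbe]}$ respectively via the relation $\braket{w}{v}=\braket{v^*}{w^*}$, explaining the appearance of $(\mcE^*)^\mu_\eta$ and $(\mcI^*)^\mu_\eta$.

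Each outer bracket is then computed as a single determinant by Wick's theorem applied to the dressed fermionic form of the deformed vector. A useful observation, provable by a short induction on $\ell$ using the commutators $[a_m,\psi_k]=\psi_{k-m}$ and $[a_m,\psi^*_k]=-\psi^*_{k+m}$, is that $\bra{-\ell}a_{-k}=0$ and $a_k\ket{-\ell}=0$ for all $k>0$; this lets the leading $e^{\pm H^*}$ factor of $\ket{\lambda}^{[\bal,\bbe]}$ (and its analogues) be freely absorbed when paired against a standard vector built from $\bra{-\ell}$. For example, writing ${}^{[[\bal,\bbe]]}\bra{\mu}=\bra{-\ell}R_\ell\cdots R_1$ with $R_j=\sum_{k\ge0}h_k(B_j/A_{\mu_j})\psi^*_{\mu_j-j-k}$ via~\eqref{eq:seHstar_commute}, pairing with $\ket{\eta}=\psi_{\eta_1-1}\cdots\psi_{\eta_\ell-\ell}\ket{-\ell}$ and applying Wick's theorem gives
\[
{}^{[[\bal,\bbe]]}\braket{\mu}{\eta}=\det\bigl[h_{\mu_i-\eta_j-i+j}(B_i/A_{\mu_i})\bigr]_{i,j=1}^{\ell}=\mcD_\eta^\mu(\bal,\bbe).
\]
For $\braket{\nu}{\lambda}^{[\bal,\bbe]}=\mcI^\nu_\lambda(\bal,\bbe)$ an analogous computation applies, where the $(-1)^i e_i(B_\ell)$ coefficients arising from commuting the $e^{-H^*(B_\ell)}$ prefactor past $\bra{\nu}$ convolve with the $h_k(A_{\lambda_i}\sqcup B_{[i,\ell]})$ coefficients from the inner dressings and telescope to $h_m(A_{\lambda_i}/B_{i-1})$ via the generating-function identity $\prod_{k=1}^{\ell}(1-\beta_k t)\cdot\prod_{k=i}^{\ell}(1-\beta_k t)^{-1}=\prod_{k=1}^{i-1}(1-\beta_k t)$. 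The remaining three outer brackets are computed in the same fashion. The vanishing of $h_k$ for $k<0$ forces each determinant to be zero unless $\eta\subseteq\mu$ and $\lambda\subseteq\nu$ hold; combined with $\eta\subseteq\nu$ (needed for $s_{\nu/\eta}\neq 0$) and $\mu\subseteq\lambda$ (implicit in the skew Grothendieck), this restricts the summation to the chain $\eta\subseteq\mu\subseteq\lambda\subseteq\nu$.

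For the three $\lambda'/\mu'$ formulas, I substitute $\lambda,\mu\mapsto\lambda',\mu'$ in the presentation and reindex the inserted identity via $\eta\mapsto\eta'$, $\nu\mapsto\nu'$, so that the Schur factor becomes $s_{\nu'/\eta'}(\xx)$. The outer brackets $\braket{\mu'}{\eta'}$ are then computed by Wick's theorem using the conjugate-form expressions of the deformed vectors from Lemma~\ref{lemma:conjugate_vectors} paired against the conjugate-shape standard vectors from~\eqref{eq:conjugate_form}, with the computation carried out around the shifted vacuum $\ket{\ell}$. The relevant commutation relations then produce elementary symmetric functions $e_k$ in place of $h_k$, yielding $\widetilde{\mcD},\widetilde{\mcI},\widetilde{\mcE}$ after the sign factors $(-1)^{\abs{\mu}},(-1)^{\abs{\eta}}$ from Lemma~\ref{lemma:conjugate_vectors} and~\eqref{eq:conjugate_form} combine with those from the Wick ordering. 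The main technical obstacle throughout is careful bookkeeping of the many $H$- and $H^*$-dressings and the subtle alphabet shifts (e.g.\ $A_{\mu_i}$ vs.\ $A_{\mu_i-1}$, $B_i$ vs.\ $B_{i-1}$) that distinguish the three families of vectors, together with verifying that the symmetric-function convolutions emerging from Wick telescope to the single-determinant entries claimed.
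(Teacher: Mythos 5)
Your proposal is correct and follows essentially the same route as the paper: inserting the resolution of identity $\id=\sum_{\eta}\ket{\eta}\cdot\bra{\eta}$ on both sides of $e^{H(\xx)}$ in the presentations~\eqref{eq:grothendieck_defn}, recognizing the middle factor as $s_{\nu/\eta}(\xx)$ via Theorem~\ref{thm:schur_jacobi_trudi}, and evaluating the outer pairings by Wick's theorem exactly as in the proof of Theorem~\ref{thm:dual_basis}, with the conjugate-shape cases handled through Lemma~\ref{lemma:conjugate_vectors} and~\eqref{eq:conjugate_form}. Your identification of the outer coefficients with $\mcD$, $\mcI$, $(\mcE^*)$, $(\mcI^*)$, including the telescoping of the $B_\ell$-prefactor against the inner dressings, matches the paper's computation.
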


\begin{proof}
We begin by showing the first equation.
We have
\[
G_{\lambda/\mu}(\xx; \bal, \bbe) = 
{}^{[[\bal, \bbe]]} \bra{\mu} e^{H(\xx)} \ket{\lambda}^{[\bal,\bbe]}
= \sum_{\eta, \nu} {}^{[[\bal, \bbe]]} \braket{\mu}{\eta} \bra{\eta} e^{H(\xx)} \ket{\nu} \braket{\nu}{\lambda}^{[\bal,\bbe]},
\]
and we see that $\mcD_{\eta}^{\mu} = {}^{[[\bal, \bbe]]} \braket{\mu}{\eta}$ and $\mcI_{\lambda}^{\nu} = \braket{\nu}{\lambda}^{[\bal,\bbe]}$ by Wick's theorem, shown similarly to the proof of Theorem~\ref{thm:dual_basis}.
Recall that $s_{\nu/\eta}(\xx) = \bra{\eta} e^{H(\xx)} \ket{\nu}$, and the claim holds for $\xx = \xx_m$ for any $m$.

%Let us do the computation in detail for $\mcD_{\eta}^{\mu}$ in detail by
%\[
%Q_i = e^{H^*(A_{\mu_j}/B_j)} \psi^*_{\mu_j-j} e^{-H^*(A_{\mu_j}/B_j)} = \sum_{k=0}^{\infty} h_k(B_j/A_{\mu_j}) \psi^*_{\mu_j-j-k},
%\]
%by Equation~\eqref{eq:seHstar_commute}.
%By Wick's theorem we have
%\begin{align*}
%{}^{[[\bal, \bbe]]} \braket{\mu}{\eta} & = \bra{-\ell}Q_{\ell} \cdots Q_1 \psi_{\eta_1-1} \cdots \psi_{\eta_{\ell}-\ell} \ket{-\ell}
%\\ & = \det \bigl[ \bra{-\ell} Q_j \psi_{\eta_i-i} \ket{-\ell} \bigr]_{i,j=1}^{\ell}
%\\ & = \det \left[ \sum_{k=0}^{\infty} h_k(B_j/A_{\mu_j}) \bra{-\ell} \psi^*_{\mu_j-j-k} \psi_{\eta_i-i} \ket{-\ell} \right]_{i,j=1}^{\ell}
%% \mu_j - j - k = \eta_i - i    <=>  \mu_j - j - \eta_i + i = k
%\\ & = \det \bigl[  h_{\mu_j - \eta_i - j + i}(B_j/A_{\mu_j}) \bigr]_{i,j=1}^{\ell} = \mcD_{\eta}^{\mu}.
%\end{align*}
%(after taking the transpose $i \leftrightarrow j$).
%Therefore, we have
%\[
%\G_{\lambda/\mu}(\xx; \bal, \bbe) = \sum_{\emptyset \subseteq \eta \subseteq \mu \subseteq \lambda \subseteq \nu} \mcD_{\eta}^{\mu}(\bal, \bbe) s_{\nu/\eta}(\xx) \mcI_{\lambda}^{\nu}(\bal, \bbe).
%\]

For the second equation, the proof is similar to the first except we now need to evaluate
\[
\widetilde{\mcI}_{\lambda}^{\nu} = \braket{\nu'}{\lambda'}^{[\bal,\bbe]} = \mcI_{\lambda'}^{\nu'},
\qquad\qquad
\widetilde{\mcD}_{\eta}^{\mu} = {}^{[[\bal, \bbe]]} \braket{\mu'}{\eta'} = \mcD_{\eta'}^{\mu'},
\]
in terms of the original shapes instead of the conjugate shapes.
We use Lemma~\ref{lemma:conjugate_vectors} and~\eqref{eq:conjugate_form} when applying Wick's theorem as before.

The remaining equations are similar to the first two.
\end{proof}

We remark that the first four expansions in Theorem~\ref{thm:matrix_elements} are infinite, whereas the last two are finite.

We also have $\mcE^{\nu}_{\lambda} = \braket{\nu}{\lambda}_{[\bal,\bbe]}$ as a consequence of~\cite[Ex.~3.3]{Iwao23} using Remark~\ref{rem:multischur}.
Note that this reduces to~\cite[Thm.~3.2, Cor.~3.3]{HJKSS21} when taking $\mu = \emptyset$.
Combinatorial interpretations of the matrix elements $\mcD_{\lambda}^{\mu}(\bal,\bbe)$, $\mcE_{\lambda}^{\mu}(\bal,\bbe)$, and $\mcI_{\lambda}^{\mu}(\bal,\bbe)$ were given in~\cite{HJKSS21}, thus making them (degree signed) Schur positive.
In particular they are polynomials in $\ZZ_{\geq 0}[\pm\bal,\pm\bbe]$ for an appropriate choice of sign of $\pm\bal$ and $\pm\bbe$.

Let us comment on the similarity of Theorem~\ref{thm:matrix_elements} and~\cite[Thm.~8.2, 8.3]{HJKSS21}.
In our formula, if we take $n$ variables (so using $\xx_n$), we can restrict to the case when $\ell(\nu) \leq \ell(\lambda) + n$ as otherwise $s_{\nu/\eta}(\xx_n) = 0$.
However, in~\cite[Thm.~8.2, 8.3]{HJKSS21}, they restrict their all of their partitions to have length at most $m$, but they use ``generalized partitions'' that allow negative parts and the have an extra overall factor that appears in the Jacobi--Trudi formulas.
By using a similar proof to~\cite[Thm.~8.2, 8.3]{HJKSS21}, we can obtain the analogous formulas
\begin{align*}
\G_{\lambda\ds\mu}(\xx_n; \bal, \bbe) & = \prod_{i=1}^{m} \prod_{j=1}^n (1 - \beta_i x_j) \sum_{\overline{\eta} \subseteq \mu \subseteq \lambda \subseteq \nu} (\mcE^*)_{\overline{\eta}}^{\mu}(\bal, \bbe) s_{\nu/\overline{\eta}}(\xx) \mcI_{\lambda}^{\nu}(\bal, \bbe),
\\
\G_{\lambda'\ds\mu'}(\xx_n; \bal, \bbe) & = \prod_{i=1}^{m} \prod_{j=1}^n (1 + \alpha_i x_j)^{-1} \sum_{\overline{\eta} \subseteq \mu \subseteq \lambda \subseteq \nu} (\widetilde{\mcE}^*)_{\eta}^{\mu}(\bal, \bbe) s_{\nu'/\overline{\eta}'}(\xx) \widetilde{\mcI}_{\lambda}^{\nu}(\bal, \bbe),
\end{align*}
where $\overline{\eta}$ is allowed to have negative parts and $\ell(\nu) \leq m$.

We remark that we did not require Theorem~\ref{thm:matrix_elements} to obtain Theorem~\ref{thm:canonical_conjugate} unlike~\cite{HJKSS21}.

%%%%%%%%%%%%%%%%%%%%
\subsection{Branching rules}

We describe branching rules for the (dual) canonical Grothendieck functions, which are formulas where we split the set of variables.
Specifically, we have the following new branching rules, which are refinements of those in~\cite[Prop.~8.7, 8.8]{Yel17}.

\begin{prop}[Branching rules]
\label{prop:branching_rules}
We have
\begin{align*}
\G_{\lambda/\mu}(\xx, \yy; \bal, \bbe) & = \sum_{\nu \subseteq \lambda} \G_{\lambda\ds\nu}(\yy; \bal, \bbe) \G_{\nu/\mu}(\xx; \bal, \bbe),
\\
\G_{\lambda\ds\mu}(\xx, \yy; \bal, \bbe) & = \sum_{\mu \subseteq \nu \subseteq \lambda} \G_{\lambda\ds\nu}(\yy; \bal, \bbe) \G_{\nu\ds\mu}(\xx; \bal, \bbe).
\\
\dG_{\lambda/\mu}(\xx, \yy; \bal, \bbe) & = \sum_{\mu \subseteq \nu \subseteq \lambda} \dG_{\lambda / \nu}(\yy; \bal, \bbe) \dG_{\nu/\mu}(\xx; \bal, \bbe).
\end{align*}
\end{prop}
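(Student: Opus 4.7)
The plan is to prove all three branching rules uniformly by combining two basic free-fermion facts: first, the multiplicative factorization $e^{H(\xx,\yy)} = e^{H(\xx)} e^{H(\yy)}$ (which is immediate from $p_k(\xx,\yy) = p_k(\xx) + p_k(\yy)$ and the fact that $H$ is linear in the $p_k$), and second, the dual basis theorem (Theorem~\ref{thm:dual_basis}), which gives the resolution of the identity operator on $\mcF^0$ as
\[
\id_{\mcF^0} = \sum_{\nu \in \mcP} \ket{\nu}^{[\bal,\bbe]} \cdot {}^{[\bal,\bbe]}\bra{\nu} = \sum_{\nu \in \mcP} \ket{\nu}_{[\bal,\bbe]} \cdot {}_{[\bal,\bbe]}\bra{\nu}.
\]
Since the Heisenberg algebra preserves the charge-zero subspace $\mcF^0$, the vectors $e^{H(\yy)} \ket{\lambda}^{[\bal,\bbe]}$ and $e^{H(\yy)} \ket{\lambda}_{[\bal,\bbe]}$ live in $\mcF^0$, so these resolutions can be validly inserted.

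For the first identity, I would write
\[
\G_{\lambda/\mu}(\xx,\yy; \bal, \bbe) = {}^{[[\bal,\bbe]]} \bra{\mu} e^{H(\xx)} e^{H(\yy)} \ket{\lambda}^{[\bal,\bbe]}
\]
and insert $\id = \sum_{\nu} \ket{\nu}^{[\bal,\bbe]} \cdot {}^{[\bal,\bbe]}\bra{\nu}$ between the two half-vertex operators, which factors the matrix element as
\[
\sum_{\nu} \bigl({}^{[[\bal,\bbe]]} \bra{\mu} e^{H(\xx)} \ket{\nu}^{[\bal,\bbe]}\bigr) \cdot \bigl({}^{[\bal,\bbe]}\bra{\nu} e^{H(\yy)} \ket{\lambda}^{[\bal,\bbe]}\bigr) = \sum_{\nu} \G_{\nu/\mu}(\xx;\bal,\bbe) \, \G_{\lambda\ds\nu}(\yy;\bal,\bbe).
\]
The restriction to $\nu \subseteq \lambda$ comes from the vanishing of $\G_{\lambda\ds\nu}$ outside this range, which is visible from the Jacobi--Trudi determinant of Theorem~\ref{thm:jacobi_trudi} or equivalently from the support properties of $\ket{\lambda}^{[\bal,\bbe]}$ under the expansion in Lemma~\ref{lemma:reduction}. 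The second identity is obtained by exactly the same insertion, but starting from ${}^{[\bal,\bbe]} \bra{\mu}$ on the left, so the middle matrix element on the left of the split becomes $\G_{\nu\ds\mu}(\xx;\bal,\bbe)$. The third identity uses instead the other pair of dual bases: start with ${}_{[\bal,\bbe]} \bra{\mu} e^{H(\xx)} e^{H(\yy)} \ket{\lambda}_{[\bal,\bbe]}$ and insert $\id = \sum_{\nu} \ket{\nu}_{[\bal,\bbe]} \cdot {}_{[\bal,\bbe]}\bra{\nu}$.

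There is no real obstacle here; the identities are essentially categorical consequences of the free-fermion setup. The only subtle point to justify carefully is that the inserted resolutions of identity on $\mcF^0$ converge when applied to the relevant vectors, which boils down to observing that both $\{\ket{\nu}^{[\bal,\bbe]}\}$ and $\{\ket{\nu}_{[\bal,\bbe]}\}$ are bases of $\mcF^0$ (a consequence of Lemma~\ref{lemma:reduction} together with the fact that $\{\ket{\nu}^{[\bbe]}\}$ and $\{\ket{\nu}_{[\bbe]}\}$ are already bases) and that the sums, when unfolded, contain only finitely many nonzero terms for each choice of $\lambda$ and $\mu$ thanks to the containment conditions imposed by the supports of the pairings.
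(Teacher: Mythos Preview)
Your proposal is correct and follows essentially the same approach as the paper: the paper's proof consists of exactly one sentence, namely that the claim follows by inserting the identity operator $\id = \sum_{\nu} \ket{\nu}^{[\bal,\bbe]} \cdot {}^{[\bal,\bbe]} \bra{\nu} = \sum_{\nu} \ket{\nu}_{[\bal,\bbe]} \cdot {}_{[\bal,\bbe]} \bra{\nu}$ into the defining matrix elements~\eqref{eq:grothendieck_defn}. Your write-up is more detailed (spelling out the factorization $e^{H(\xx,\yy)}=e^{H(\xx)}e^{H(\yy)}$ and the support/finiteness justifications), but the underlying argument is identical.
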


\begin{proof}
The claim follows by inserting the identity operator
\begin{subequations}
\begin{align}
\id & = \sum_{\lambda} \ket{\lambda}^{[\bal,\bbe]} \cdot {}^{[\bal,\bbe]} \bra{\lambda} \label{eq:identity_upper}
\\ & = \sum_{\lambda} \ket{\lambda}_{[\bal,\bbe]} \cdot {}_{[\bal,\bbe]} \bra{\lambda}\label{eq:identity_lower}
\end{align}
\end{subequations}
in~\eqref{eq:grothendieck_defn}.
\end{proof}

The summation ``$\sum_{\nu \subseteq \lambda}$" cannot be replaced with ``$\sum_{\mu\subseteq \nu \subseteq \lambda}$'' in the first equation.
Furthermore, $\mu \not\subseteq \lambda$ does not imply that $\G_{\lambda/\mu}(\xx; \bal, \bbe) = 0$.
We see that $\G_{\lambda\ds\mu}(\xx; \bal, \bbe) = 0$ whenever $\mu \not\subseteq \lambda$ and $\G_{\lambda \ds \lambda}(\xx; \bal, \bbe) = 1$ by Lemma~\ref{lemma:reduction} and Theorem~\ref{thm:dual_basis}.
However, Equation~\eqref{eq:corner_decomposition} implies that
\[
0 = \G_{(1) \ds (2)}(\xx; \bal, \bbe) = \G_{(1) / (2)}(\xx; \bal, \bbe) - (\alpha_2 + \beta_1) \G_{(1) / (1)}(\xx; \bal, \bbe).
\]
We can see from the Jacobi--Trudi formula that $\G_{(1)/(1)}(\xx; \bal, \bbe) = 1$, and so
\[
\G_{(1) / (2)}(\xx; \bal, \bbe) = \alpha_2 + \beta_1.
\]
For general $\mu \not\subseteq \lambda$, it is straightforward to derive 
%Therefore, for $\mu \not\subseteq \lambda$ we define
\begin{equation}\label{eq:single_slash_ex}
G_{\lambda/\mu}(\xx; \bal, \bbe)=\prod_{(i,j)\in \mu/\lambda}(\alpha_i+\beta_j)\cdot G_{\lambda/(\lambda\cap \mu)}(\xx; \bal, \bbe)
\end{equation}
from Equation~\eqref{eq:inverse_corner} and $G_{\lambda\ds\mu}(\xx; \bal, \bbe)=0$ $(\mu \not\subseteq \lambda)$.
One may use \eqref{eq:single_slash_ex} as an alternative definition of $G_{\lambda/\mu}(\xx; \bal, \bbe)$ ($\mu \not\subseteq \lambda$)
extending~\cite[Def.~6.5]{HJKSS21} (which is assuming $\mu \subseteq \lambda$).
The non-vanishing of $\G_{\lambda/\mu}(\xx; \bal, \bbe)$ when $\mu \not\subseteq \lambda$ is reflecting the fact that the skew shape description for $G_{\lambda/\mu}(\xx; \bal, \bbe)$ is not natural from the point of view of branching rules.

%%%%%%%%%%%%%%%%%%%%
\subsection{Skew Cauchy identities}

Our next identities are the skew Cauchy identities, which are the refined canonical version of~\cite[Thm.~5.1]{Yel19}
and~\cite[Cor.~6.3]{Yel19}.

\begin{thm}[Skew Cauchy formulas]
\label{thm:skew_cauchy}
We have
\begin{subequations}
\begin{align}
\label{eq:skew_cauchy_basic}
\sum_{\lambda} \G_{\lambda \ds \mu}(\xx; \bal, \bbe) \dG_{\lambda / \nu}(\yy; \bal, \bbe) 
&= \prod_{i,j} \frac{1}{1 - x_i y_j} \sum_{\eta}  \G_{\nu \ds \eta}(\xx; \bal, \bbe)\dG_{\mu / \eta}(\yy; \bal, \bbe), \\
\sum_{\lambda} \G_{\lambda^\prime \ds \mu^\prime}(\xx; \bbe, \bal) \dG_{\lambda / \nu}(\yy; \bal, \bbe) 
&= \prod_{i,j} (1 + x_i y_j) \sum_{\eta}  \G_{\nu^\prime \ds \eta^\prime}(\xx; \bbe, \bal)\dG_{\mu / \eta}(\yy; \bal, \bbe), \\
\sum_{\lambda} \G_{\lambda \ds \mu}(\xx; \bal, \bbe) \dG_{\lambda^\prime / \nu^\prime}(\yy; \bbe, \bal) 
&= \prod_{i,j} (1 + x_i y_j) \sum_{\eta}  \G_{\nu \ds \eta}(\xx; \bal, \bbe)\dG_{\mu^\prime / \eta^\prime}(\yy; \bbe, \bal), \\
\sum_{\lambda} \G_{\lambda^\prime \ds \mu^\prime}(\xx; \bbe, \bal) \dG_{\lambda^\prime / \nu^\prime}(\yy; \bbe, \bal) 
&= \prod_{i,j} \frac{1}{1 - x_i y_j} \sum_{\eta}  \G_{\nu^\prime \ds \eta^\prime}(\xx; \bbe, \bal)\dG_{\mu^\prime / \eta^\prime}(\yy; \bbe, \bal).
\end{align}
\end{subequations}
\end{thm}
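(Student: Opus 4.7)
The plan is to pass everything through the free-fermion description, use the resolution of the identity given by the dual bases to combine the two sums, then swap the order of the half-vertex operators using the Heisenberg commutation relations, and finally reinsert the identity to read off the right hand side.

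For the first identity, I would begin by rewriting the summand using the $H^{*}$-reformulation recorded after Corollary~\ref{cor:free_fermion_grothendiecks}:
\[
\G_{\lambda\ds\mu}(\xx;\bal,\bbe) = {}_{[\bal,\bbe]}\bra{\lambda} e^{H^*(\xx)} \ket{\mu}_{[\bal,\bbe]},
\qquad
\dG_{\lambda/\nu}(\yy;\bal,\bbe) = {}_{[\bal,\bbe]}\bra{\nu} e^{H(\yy)} \ket{\lambda}_{[\bal,\bbe]}.
\]
Applying the lower resolution~\eqref{eq:identity_lower}, the sum $\sum_\lambda$ collapses to the matrix element ${}_{[\bal,\bbe]}\bra{\nu} e^{H(\yy)} e^{H^*(\xx)} \ket{\mu}_{[\bal,\bbe]}$. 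The commutator $[H(\yy),H^*(\xx)]=\sum_{m>0}\frac{p_m(\xx)p_m(\yy)}{m}$, obtained immediately from~\eqref{eq:boson_relation}, is the central element $\log\prod_{i,j}(1-x_iy_j)^{-1}$, so the Baker--Campbell--Hausdorff formula yields
\[
e^{H(\yy)} e^{H^*(\xx)} = \prod_{i,j}\frac{1}{1-x_iy_j}\, e^{H^*(\xx)} e^{H(\yy)}.
\]
After the swap, one more application of~\eqref{eq:identity_lower} inserts a complete set of states $\ket{\eta}_{[\bal,\bbe]}\cdot{}_{[\bal,\bbe]}\bra{\eta}$ and the right hand side of the first identity appears term by term.

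For the remaining three identities, the same template works once the conjugation $\omega$ is implemented at the free-fermion level. Writing $J^*(\xx):=\omega H^*(\xx)=\sum_{k>0}(-1)^{k-1}\frac{p_k(\xx)}{k}a_{-k}$, the $\omega$-transformed version of Theorem~\ref{thm:canonical_conjugate} gives
\[
\G_{\lambda'\ds\mu'}(\xx;\bbe,\bal) = {}_{[\bal,\bbe]}\bra{\lambda} e^{J^*(\xx)} \ket{\mu}_{[\bal,\bbe]},
\qquad
\dG_{\lambda'/\nu'}(\yy;\bbe,\bal) = {}_{[\bal,\bbe]}\bra{\nu} e^{J(\yy)} \ket{\lambda}_{[\bal,\bbe]}.
\]
The alternating signs in the coefficients of $J^*$ replace $\frac{p_m(\xx)p_m(\yy)}{m}$ by $\frac{(-1)^{m-1}p_m(\xx)p_m(\yy)}{m}$, so the relevant commutators become $\log\prod_{i,j}(1+x_iy_j)$ when exactly one of the two exponentials is replaced by its $\omega$-version, and $\log\prod_{i,j}(1-x_iy_j)^{-1}$ again when both are. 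Thus identities 2 and 3 pick up the factor $\prod_{i,j}(1+x_iy_j)$ while identity 4 recovers $\prod_{i,j}(1-x_iy_j)^{-1}$, in each case after the same insert--commute--reinsert routine.

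The substantive work is only in the first identity; the bookkeeping for the remaining three is the same calculation with a single sign flip in the power sums. The one point that requires care is making sure the resolutions~\eqref{eq:identity_upper} and~\eqref{eq:identity_lower} are used consistently with the bras and kets appearing on each side, i.e.\ that the basis labeled by the subscript $[\bal,\bbe]$ is the one relevant to the $H^{*}$-formulation of $\G_{\lambda\ds\mu}$ and the $H$-formulation of $\dG_{\lambda/\mu}$. This is the only place where a mismatch could appear, and it is handled cleanly by Theorem~\ref{thm:dual_basis}; once that is done, the argument is driven entirely by the Heisenberg relation~\eqref{eq:boson_relation}.
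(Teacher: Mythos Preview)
Your proposal is correct and follows essentially the same approach as the paper: evaluate a single matrix element in two ways by inserting a resolution of the identity on either side of a Heisenberg commutation, then handle the remaining three identities by the $\omega$-involution. The only cosmetic difference is that the paper works with the upper basis and computes ${}^{[\bal,\bbe]}\bra{\mu} e^{H(\xx)} e^{H^*(\yy)} \ket{\nu}^{[\bal,\bbe]}$ using~\eqref{eq:identity_upper}, whereas you work with the lower basis and compute ${}_{[\bal,\bbe]}\bra{\nu} e^{H(\yy)} e^{H^*(\xx)} \ket{\mu}_{[\bal,\bbe]}$ using~\eqref{eq:identity_lower}; these two matrix elements are $\ast$-dual and hence equal, so the arguments are interchangeable.
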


\begin{proof}
We show Equation~\eqref{eq:skew_cauchy_basic} as the remaining identities can be derived by $\omega$-involution on $\xx$ and/or $\yy$ or can be proved in the same manner.
Equation~\eqref{eq:skew_cauchy_basic} follows by evaluating
\begin{equation}
\label{eq:evaluateforskewcauchy}
{}^{[\bal,\bbe]} \bra{\mu} e^{H(\xx)} e^{H^*(\yy)} \ket{\nu}^{[\bal,\bbe]}
\end{equation}
in two ways using the identity operator~\eqref{eq:identity_upper}.

First~\eqref{eq:evaluateforskewcauchy} can be evaluated using Corollary~\ref{cor:free_fermion_grothendiecks} with $\ast$ being applied to the dual canonical formula:
\begin{equation}
\label{eq:skew_cauchy1}
\begin{aligned}
{}^{[\bal,\bbe]} \bra{\mu} e^{H(\xx)} e^{H^*(\yy)} \ket{\nu}^{[\bal,\bbe]} & = \sum_{\lambda} {}^{[\bal,\bbe]} \bra{\mu} e^{H(\xx)} \ket{\lambda}^{[\bal,\bbe]} \cdot {}^{[\bal,\bbe]} \bra{\lambda} e^{H^*(\yy)} \ket{\nu}^{[\bal,\bbe]}
\\
& = \sum_{\lambda} \G_{\lambda\ds\mu}(\xx; \bal, \bbe) \dG_{\lambda/\nu}(\yy; \bal, \bbe).
\end{aligned}
\end{equation}
Another way to evaluate is to first use
\[
e^{H(\xx)}e^{H^*(\yy)} = \prod_{i,j}\frac{1}{1-x_i y_j} e^{H^*(\yy)}e^{H(\xx)},
\]
which follows from~\eqref{eq:boson_relation}, to compute
\begin{align*}
{}^{[\bal,\bbe]} \bra{\mu} e^{H(\xx)} e^{H^*(\yy)} \ket{\nu}^{[\bal,\bbe]} & = \prod_{i,j}\frac{1}{1 - x_i y_j} {}^{[\bal,\bbe]} \bra{\mu} e^{H^*(\yy)} e^{H(\xx)} \ket{\nu}^{[\bal,\bbe]}
\\ & = \prod_{i,j}\frac{1}{1 - x_i y_j} \sum_{\eta} {}^{[\bal,\bbe]} \bra{\mu} e^{H^*(\yy)} \ket{\eta}^{[\bal,\bbe]} \cdot {}^{[\bal,\bbe]} \bra{\eta} e^{H(\xx)} \ket{\nu}^{[\bal,\bbe]}
\\ & = \prod_{i,j} \frac{1}{1 - x_i y_j} \sum_{\eta} \dG_{\mu / \eta}(\yy; \bal, \bbe) \G_{\nu \ds \eta}(\xx; \bal, \bbe).
\end{align*}
Comparing this with~\eqref{eq:skew_cauchy1} yields the result.
\end{proof}

\subsection{Skew Pieri-type identities}

We can also derive the refined canonical versions of skew Pieri-type formulas~\cite[Thm.~7.10]{Yel19} using free-fermions
(the skew Pieri-type formulas can also be derived from the skew-Cauchy formulas by
 Warnaar's general argument~\cite{Warnaar13} as explained in ~\cite[Section 7]{Yel19}).

\begin{thm}[Skew Pieri-type formulas]
\label{thm: pieri-formulas}
\begin{subequations}
\begin{align}
\sum_{\lambda,\eta} G_{\lambda \ds \mu}({\bf x};\bal,\bbe) G_{\nu^\prime \ds \eta^\prime}(-{\bf x};\bbe,\bal)
g_{\lambda/\eta}({\bf y};\bal,\bbe)&=\prod_{i,j} \frac{1}{1-x_i y_j}
g_{\mu/\nu}({\bf y};\bal,\bbe), \label{skewpierione} \\
\sum_{\lambda,\eta} g_{\lambda / \mu}({\bf x};\bal,\bbe) g_{\nu^\prime / \eta^\prime}(-{\bf x};\bbe,\bal)
G_{\lambda \ds \eta}({\bf y};\bal,\bbe)&=\prod_{i,j} \frac{1}{1-x_i y_j}
G_{\mu \ds \nu}({\bf y};\bal,\bbe), \label{skewpieritwo} \\
\sum_{\lambda,\eta} G_{\lambda^\prime \ds \mu^\prime}({\bf x};\bbe,\bal) G_{\nu \ds \eta}(-{\bf x};\bal,\bbe)
g_{\lambda/\eta}({\bf y};\bal,\bbe)&=\prod_{i,j} (1+x_i y_j)
g_{\mu/\nu}({\bf y};\bal,\bbe), \label{skewpierithree} \\
\sum_{\lambda,\eta} g_{\lambda^\prime / \mu^\prime}({\bf x};\bbe,\bal) g_{\nu / \eta}(-{\bf x};\bal,\bbe)
G_{\lambda \ds \eta}({\bf y};\bal,\bbe)&=\prod_{i,j} (1+x_i y_j)
G_{\mu \ds \nu}({\bf y};\bal,\bbe). \label{skewpierifour}
\end{align}
\end{subequations}
\end{thm}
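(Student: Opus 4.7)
The approach parallels the skew-Cauchy argument of Theorem~\ref{thm:skew_cauchy}: I will evaluate a carefully chosen triple-operator matrix element in two ways. I describe~\eqref{skewpierione} in detail and then explain how the other three follow. The starting observation is the elementary identity $J(-\xx) = -H(\xx)$, which follows from $J(\xx) = \sum_{k>0}(-1)^{k-1}\frac{p_k(\xx)}{k}a_k$ and $p_k(-\xx) = (-1)^k p_k(\xx)$. Combined with the $\ds$-analog of Theorem~\ref{thm:canonical_conjugate}, namely $G_{\nu'\ds\eta'}(\xx;\bbe,\bal) = {}^{[\bal,\bbe]}\bra{\eta} e^{J(\xx)} \ket{\nu}^{[\bal,\bbe]}$ (which is immediate from the $/$-version stated in the excerpt together with the corner decompositions~\eqref{eq:corner_decomposition}--\eqref{eq:inverse_corner}), this gives
\[
G_{\nu'\ds\eta'}(-\xx;\bbe,\bal) = {}^{[\bal,\bbe]}\bra{\eta} e^{-H(\xx)} \ket{\nu}^{[\bal,\bbe]}.
\]

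Using also the $\ast$-dual matrix element $g_{\lambda/\eta}(\yy;\bal,\bbe) = {}^{[\bal,\bbe]}\bra{\lambda} e^{H^*(\yy)} \ket{\eta}^{[\bal,\bbe]}$ (the $\ast$ of Corollary~\ref{cor:free_fermion_grothendiecks}), I would rewrite the LHS of~\eqref{skewpierione} as a triple sum of free-fermion matrix elements, rearrange the scalar factors so that the $\lambda$ and $\eta$ kets and bras face each other, and collapse the sum via two applications of the identity operator~\eqref{eq:identity_upper}. The result is the single matrix element
\[
{}^{[\bal,\bbe]}\bra{\mu}\, e^{H(\xx)}\, e^{H^*(\yy)}\, e^{-H(\xx)}\, \ket{\nu}^{[\bal,\bbe]}.
\]
Now I apply the bosonic commutation relation $e^{H(\xx)} e^{H^*(\yy)} = \prod_{i,j}(1-x_iy_j)^{-1}\, e^{H^*(\yy)} e^{H(\xx)}$ (a direct consequence of~\eqref{eq:boson_relation}, as used in the proof of Theorem~\ref{thm:skew_cauchy}); the factors $e^{H(\xx)}$ and $e^{-H(\xx)}$ then annihilate, leaving $\prod_{i,j}(1-x_iy_j)^{-1}\, {}^{[\bal,\bbe]}\bra{\mu} e^{H^*(\yy)} \ket{\nu}^{[\bal,\bbe]} = \prod_{i,j}(1-x_iy_j)^{-1}\, g_{\mu/\nu}(\yy;\bal,\bbe)$, which is the RHS.

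For~\eqref{skewpierithree} I instead use $G_{\nu\ds\eta}(-\xx;\bal,\bbe) = {}^{[\bal,\bbe]}\bra{\eta} e^{J(-\xx)} \ket{\nu}^{[\bal,\bbe]}$ directly (no conjugation on shapes), so the middle insertion becomes $e^{J(-\xx)}$ and the relevant commutator $[J(\xx),H^*(\yy)] = \sum_{k>0}(-1)^{k-1}p_k(\xx)p_k(\yy)/k = \sum_{i,j}\log(1+x_iy_j)$ yields the factor $\prod_{i,j}(1+x_iy_j)$ in place of $\prod_{i,j}(1-x_iy_j)^{-1}$. Identities~\eqref{skewpieritwo} and~\eqref{skewpierifour} are obtained by the same recipe but working with the lower-indexed vectors $\ket{\cdot}_{[\bal,\bbe]}$, ${}_{[\bal,\bbe]}\bra{\cdot}$, and using the $\ast$-dual formula $G_{\lambda\ds\mu}(\yy;\bal,\bbe) = {}_{[\bal,\bbe]}\bra{\lambda} e^{H^*(\yy)} \ket{\mu}_{[\bal,\bbe]}$ together with the identity operator~\eqref{eq:identity_lower}.

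The main obstacle is really just the initial choice of which triple product $e^{\pm H(\xx)} e^{H^*(\yy)} e^{\mp H(\xx)}$ (or its $J$-analog) to insert, so that the LHS expands correctly and the commutation identity produces exactly the claimed prefactor; once that choice is fixed, every step is routine free-fermionic bookkeeping. A secondary check is the $\ds$-version of Theorem~\ref{thm:canonical_conjugate}, which is used implicitly and should be recorded explicitly.
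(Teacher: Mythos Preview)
Your proposal is correct and follows essentially the same route as the paper: evaluate ${}^{[\bal,\bbe]}\bra{\mu} e^{H(\xx)} e^{H^*(\yy)} e^{-H(\xx)} \ket{\nu}^{[\bal,\bbe]}$ (respectively its lower-indexed or $J$-analog) in two ways, once by inserting the identity~\eqref{eq:identity_upper} twice and once via the bosonic commutation relation, exactly as the paper does. One small slip to fix: in your sketch of~\eqref{skewpierithree} you write $G_{\nu\ds\eta}(-\xx;\bal,\bbe) = {}^{[\bal,\bbe]}\bra{\eta} e^{J(-\xx)} \ket{\nu}^{[\bal,\bbe]}$, but $J(-\xx)=-H(\xx)$ whereas what you need is $H(-\xx)=-J(\xx)$, so the rightmost operator should be $e^{-J(\xx)}$ (your subsequent use of the commutator $[J(\xx),H^*(\yy)]$ shows you had the correct triple $e^{J(\xx)}e^{H^*(\yy)}e^{-J(\xx)}$ in mind).
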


\begin{proof}
\eqref{skewpierione}
can be derived by
evaluating
\begin{eqnarray*}
{}^{[\bal,\bbe]} \bra{\mu}e^{H({\bf x})} e^{H^*(\bf y)} e^{-H(\bf x)} \ket{\nu}^{[\bal,\bbe]},
\end{eqnarray*}
in two ways. One way of evaluation is
\begin{equation}
\label{skewpierione-one}
\begin{aligned}
&{}^{[\bal,\bbe]} \bra{\mu}e^{H({\bf x})} e^{H^*(\bf y)} e^{-H(\bf x)} \ket{\nu}^{[\bal,\bbe]} \\
& \hspace{40pt} = \prod_{i,j} \frac{1}{1-x_i y_j} {}^{[\bal,\bbe]} \bra{\mu} e^{H^*(\bf y)} \ket{\nu}^{[\bal,\bbe]} 
=\prod_{i,j} \frac{1}{1-x_i y_j} g_{\mu/\nu}({\bf y};\bal,\bbe).
\end{aligned}
\end{equation}
Another way of evaluation is to insert decomposition of the identity operator as
\begin{align}
&{}^{[\bal,\bbe]} \bra{\mu}e^{H({\bf x})} e^{H^*(\bf y)} e^{-H(\bf x)} \ket{\nu}^{[\bal,\bbe]} \nonumber \\
& \hspace{40pt} = \sum_{\lambda,\eta} {}^{[\bal,\bbe]} \bra{\mu}e^{H(\bf x)} \ket{\lambda}^{[\bal,\bbe]} \cdot
{}^{[\bal,\bbe]} \langle \lambda| e^{H^*(\bf y)} \ket{\eta}^{[\bal,\bbe]} \cdot
{}^{[\bal,\bbe]} \langle \eta|e^{-H(\bf x)} \ket{\nu}^{[\bal,\bbe]} \nonumber \\
& \hspace{40pt} = \sum_{\lambda,\eta} {}^{[\bal,\bbe]} \bra{\mu}e^{H(\bf x)} \ket{\lambda}^{[\bal,\bbe]} \cdot
{}^{[\bal,\bbe]} \langle \lambda| e^{H^*(\bf y)} \ket{\eta}^{[\bal,\bbe]} \cdot
{}^{[\bal,\bbe]} \langle \eta|e^{\omega H(-\bf x)} \ket{\nu}^{[\bal,\bbe]} \nonumber \\
& \hspace{40pt} = \sum_{\lambda,\eta} G_{\lambda \ds \mu}({\bf x};\bal,\bbe) 
g_{\lambda/\eta}({\bf y};\bal,\bbe) G_{\nu^\prime \ds \eta^\prime}(-{\bf x};\bbe,\bal).
\label{skewpierione-two}
\end{align}
Equation~\eqref{skewpierione} follows from \eqref{skewpierione-one} and \eqref{skewpierione-two}.

We can also derive \eqref{skewpieritwo} in the same way by evaluating
${}_{[\bal,\bbe]} \bra{\mu}e^{H(\xx)} e^{H^*(\yy)} e^{-H(\xx)} \ket{\nu}_{[\bal,\bbe]}$.
\eqref{skewpierithree} and \eqref{skewpierifour} can be derived in the same way or
follows from \eqref{skewpierione} and \eqref{skewpieritwo} by $\omega$-involution on $\xx$ respectively.
\end{proof}

%%%%%%%%%%%%%%%%%%%%
\subsection{Determinantal formulas}

We show an additional determinantal formulas for $G_{\lambda\ds\mu}(\xx;\bal, \bbe)$ and $G_{\lambda/\mu}(\xx;\bal, \bbe)$ by using their fermionic presentation. 
From~\cite[Rem.~2.8]{HIMN17}, let $\mathfrak{G}_k(\xx,\beta)$ be the single row $\beta$-Grothendieck polynomial, which is the coefficient of $z^k$ in the generating function
\[
\mathcal{G}(z,\beta) := \sum_{k \in \ZZ} \mathfrak{G}_k(\xx,\beta) z^k = \frac{1}{1-\beta z^{-1}} \prod_{k=1}^\infty \frac{1 - \beta x_k}{1 - x_k z}.
\]

\begin{remark}
\label{rem:expansions}
We must expand $(1 - \beta z^{-1})^{-1} = 1 + \beta z^{-1} + \cdots \in \ZZ[\beta][\![z^{-1}]\!]$ and be careful about the cancellations rather than rewriting it as the formal power series $\frac{z}{z - \beta} = -\beta^{-1}z - \beta^{-2}z^2 + \cdots$ as $\beta$ may not be invertible.
If we instead multiply by $\frac{z}{z - \beta} \in \ZZ[\beta^{-1}][\![z]\!]$, then we have an extra term of $-\beta^{-k}$ in $z^k$ in the expansion.
\end{remark}

The following is proved in \cite[Cor.~3.4]{Iwao20}:
\begin{align}\label{eq:generate_G}
e^{H(\xx)} e^{H^*(\beta)} \psi(z) e^{-H^*(\beta)} e^{-H(\xx)}
=\prod_{k=1}^\infty (1-\beta x_k)^{-1} \mathcal{G}(z,\beta) \psi(z).
\end{align}

\begin{prop}\label{prop:determinant_formula_for_Gs}
If $\ell>\ell(\lambda)$, we have
\begin{align*}
G_{\lambda / \mu}(\xx;\bal, \bbe) 
&=\det \Bigg[ \sum_{m=0}^\infty \sum_{k=0}^{\infty} 
(-1)^k e_k(A_{\mu_j} \sqcup B_{(j,\ell]} )
h_{m-k}(A_{\lambda_i}\sqcup B_{(i,\ell]}   ) 
\mathfrak{G}_{\lambda_i-\mu_j-i+j+m}(\xx,\beta_i) \Bigg]_{i,j=1}^{\ell}, \\
G_{\lambda \ds \mu}(\xx;\bal, \bbe) 
&= \det \Bigg[ \sum_{m=0}^\infty \sum_{k=0}^{\mu_j-j+\ell} 
(-1)^k e_k(A_{\mu_j-1} \sqcup B_{[j,\ell]} )
 h_{m-k}(A_{\lambda_i} \sqcup B_{(i,\ell]} ) 
\mathfrak{G}_{\lambda_i-\mu_j-i+j+m}(\xx,\beta_i) \Bigg]_{i,j=1}^{\ell}.
\end{align*}
\end{prop}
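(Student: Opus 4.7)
The plan is to start from the fermionic presentation~\eqref{eq:grothendieck_defn}, conjugate $e^{H(\xx)}$ through the operators defining $\ket{\lambda}^{[\bal,\bbe]}$ using~\eqref{eq:generate_G} to introduce one factor of $\mathfrak{G}_m(\xx,\beta_i)$ per row, and then apply Wick's theorem. Since $\ell > \ell(\lambda)$, the expression~\eqref{eq:sigmaell_equals_to_zero} with $\sigma = \lambda$ gives
\[
\ket{\lambda}^{[\bal,\bbe]} = e^{-H^*(B_\ell)}\prod_{1\leq i\leq\ell}^{\rightarrow} \Phi_i \ket{-\ell},
\qquad
\Phi_i := e^{H^*(A_{\lambda_i}\sqcup B_{[i,\ell]})}\psi_{\lambda_i-i}e^{-H^*(A_{\lambda_i}\sqcup B_{[i,\ell]})},
\]
and one may split the dressing as $e^{H^*(A_{\lambda_i}\sqcup B_{[i,\ell]})} = e^{H^*(Y_i)} e^{H^*(\beta_i)}$ with $Y_i := A_{\lambda_i}\sqcup B_{(i,\ell]}$ (using additivity of $H^*$).

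First I would commute $e^{H(\xx)}$ past $e^{-H^*(B_\ell)}$ to pick up the scalar $D_{B_\ell} := \prod_{i,k}(1-\beta_ix_k)$, and then past each $\Phi_i$. The scalar factors from $e^{H(\xx)}$ commuting past the outer $e^{\pm H^*(Y_i)}$ cancel because those dressings occur symmetrically around the core, while~\eqref{eq:generate_G} rewrites the core as $e^{H(\xx)}e^{H^*(\beta_i)}\psi_{\lambda_i-i}e^{-H^*(\beta_i)}e^{-H(\xx)} = P_i^{-1}\sum_m\mathfrak{G}_m(\xx,\beta_i)\psi_{\lambda_i-i-m}$ with $P_i := \prod_k(1-\beta_ix_k)$. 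Expanding the conjugation by $e^{\pm H^*(Y_i)}$ via~\eqref{eq:eHstar_commute} and using $e^{H(\xx)}\ket{-\ell}=\ket{-\ell}$ yields
\[
\widetilde{\Phi}_i := e^{H(\xx)}\Phi_ie^{-H(\xx)} = P_i^{-1}\sum_{m,p}\mathfrak{G}_m(\xx,\beta_i)\, h_p(Y_i)\,\psi_{\lambda_i-i-m+p},
\]
a linear combination of bare $\psi$ operators.

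Next I would push the remaining $e^{-H^*(B_\ell)}$ leftward past each $\Psi_j := e^{-H^*(B_j/A_{\mu_j})}\psi^*_{\mu_j-j}e^{H^*(B_j/A_{\mu_j})}$ of ${}^{[[\bal,\bbe]]}\bra{\mu}$. Conjugation by $e^{H^*(B_\ell)}$ converts $\Psi_j$ into $\widehat{\Psi}_j = e^{H^*(W_j)}\psi^*_{\mu_j-j}e^{-H^*(W_j)} = \sum_k(-1)^k e_k(W_j)\psi^*_{\mu_j-j-k}$ with $W_j := B_{(j,\ell]}\sqcup A_{\mu_j}$, and $e^{-H^*(B_\ell)}$ is absorbed since $\bra{-\ell}e^{-H^*(B_\ell)} = \bra{-\ell}$. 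Applying Wick's theorem (Theorem~\ref{thm:Wick}) to $D_{B_\ell}\bra{-\ell}\widehat{\Psi}_\ell\cdots\widehat{\Psi}_1\widetilde{\Phi}_1\cdots\widetilde{\Phi}_\ell\ket{-\ell}$ produces a determinant of two-point functions; the propagator $\bra{-\ell}\psi^*_a\psi_b\ket{-\ell} = \delta_{a,b}$ (for $a \geq -\ell$) forces $m = \lambda_i-\mu_j-i+j+k+p$, and after the reindexing $s := k+p$ the $(i,j)$-entry becomes $P_i^{-1}$ times the claimed summand. Factoring $P_i^{-1}$ from row $i$ yields the overall prefactor $D_{B_\ell}^{-1}$ which cancels $D_{B_\ell}$, establishing the identity for $G_{\lambda/\mu}$. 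The case of $G_{\lambda\ds\mu}$ is identical except that the operator $\Psi'_j$ coming from ${}^{[\bal,\bbe]}\bra{\mu}$ has dressing $B_{j-1}/A_{\mu_j-1}$ instead of $B_j/A_{\mu_j}$, producing $W'_j = A_{\mu_j-1}\sqcup B_{[j,\ell]}$ in exact agreement with the proposition; the explicit bound $k \leq \mu_j-j+\ell$ is just the cardinality of $W'_j$ beyond which $e_k$ vanishes (and coincides with the range constraint $a \geq -\ell$ from the propagator).

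The main obstacle will be the careful bookkeeping of the several scalar commutation factors (notably $D_{B_\ell}$, the internally cancelling $D_{Y_i}$ arising from the BCH-type identity $e^{H(\xx)}e^{\pm H^*(Y)} = D_Y^{\mp 1} e^{\pm H^*(Y)}e^{H(\xx)}$, and the row factors $P_i$ isolated by~\eqref{eq:generate_G}), and verifying these combine to exactly unity so that the only surviving polynomial content per row is $\mathcal{G}(z,\beta_i)$. A secondary subtlety is that the sum defining $\widetilde{\Phi}_i$ formally runs over all $m\in\ZZ$ (see Remark~\ref{rem:expansions}), but the Kronecker delta from the propagator determines $m$ uniquely in terms of the nonnegative indices $k,p \geq 0$, exactly recovering the range $s \geq 0$ claimed in the statement.
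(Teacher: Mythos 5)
Your proposal is correct and follows essentially the same route as the paper: the paper packages the identical computation in generating-function form, applying Wick's theorem to the dressed fields $\psi(z_i)$, $\psi^*(w_j)$ and extracting the coefficient of $\prod_i z_i^{\lambda_i-i}\prod_j w_j^{\mu_j-j}$ at the end, whereas you work directly with the modes and the Kronecker-delta propagator; the key inputs (the definition~\eqref{eq:grothendieck_defn}, the conjugation identity~\eqref{eq:generate_G}, the scalar commutation with $e^{-H^\ast(B_\ell)}$, and Wick's theorem) are the same. One small correction to a parenthetical: in the $G_{\lambda\ds\mu}$ case the upper bound $k\leq\mu_j-j+\ell$ is enforced only by the propagator constraint, not by vanishing of $e_k$, since when $\mu_j=0$ the set $A_{\mu_j-1}\sqcup B_{[j,\ell]}$ has $\mu_j-j+\ell+1$ elements and $e_{\mu_j-j+\ell+1}$ need not vanish (this is precisely the point of the remark following the proposition).
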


\begin{proof}
This can be proved in the same way as discussed in \cite[Sec.~4.1]{Iwao20}.
Let us show the case for $G_{\lambda \ds \mu}(\xx;\bal, \bbe)$.
The case for $G_{\lambda / \mu}(\xx;\bal, \bbe)$ can be proven in a similar way.

We introduce the following generating function for $G_{\lambda \ds \mu}(\xx;\bal, \bbe)$
\begin{align}
\Phi:=\Phi(z_1,\dotsc,z_\ell,w_1,\dotsc,w_\ell)
=& \bra{-\ell} \prod^{\leftarrow}_{1 \leq j \leq \ell}
\left( e^{-H^\ast(B_{j-1}/A_{\mu_j-1})} \psi^*(w_j)  e^{H^\ast(B_{j-1}/A_{\mu_j-1})}  \right) 
e^{H(\xx)} \nonumber \\
&\times 
e^{-H^\ast(B_\ell)}
\prod^{\rightarrow}_{1 \leq i \leq \ell} 
\left( 
e^{H^*(A_{\lambda_i}\sqcup B_{[i,\ell]})} 
\psi(z_i)
e^{-H^*(A_{\lambda_i}\sqcup B_{[i,\ell]})} 
 \right) 
\ket{-\ell},
\label{refinedskewgrothendieckgenerating}
\end{align}
whose coefficient of $\prod_{i=1}^\ell z_i^{\lambda_i-i} \prod_{j=1}^\ell w_j^{\mu_j-j}$
is $G_{\lambda \ds \mu}(\xx;\bal, \bbe)$.
We first rewrite $\Phi$
%$\Phi(z_1,\dotsc,z_\ell,w_1,\dotsc,w_\ell)$ 
as
\begin{align}
%&\Phi(z_1,\dotsc,z_\ell,w_1,\dotsc,w_\ell) \nonumber \\
%=&
\Phi=\bra{-\ell} Q_\ell(w_\ell) \cdots Q_2(w_2) Q_1(w_1) e^{H^*(B_{\ell})} e^{H(\xx)}
e^{-H^*(B_{\ell})} P_1(z_1) P_2(z_2) \cdots P_\ell(z_\ell) \ket{-\ell},
\end{align}
where
\begin{align*}
P_i(z_i)  = e^{H^*(A_{\lambda_i}\sqcup B_{[i,\ell]})} \psi(z_i) e^{-H^*(A_{\lambda_i}\sqcup B_{[i,\ell]})},
\quad
Q_j(w_j) = 
e^{H^*(A_{\mu_j-1}\sqcup B_{[j,\ell]})}
\psi^*(w_j) 
e^{-H^*(A_{\mu_j-1}\sqcup B_{[j,\ell]})}.
\end{align*}
Next, we use~\eqref{eq:conjugate_op_star}, $e^{-H(\xx)} \ket{-\ell} = \ket{-\ell}$ and Wick's theorem to get the determinant form
\begin{align}
\Phi
&=
\prod_{j=1}^\ell \prod_{k=1}^\infty
(1-\beta_j x_k)
\times
\bra{-\ell} Q_\ell(w_\ell) \cdots Q_1(w_1) 
(e^{H(\xx)} P_1(z_1) e^{-H(\xx)})
%(e^{H(\xx)} P_2(z_2) e^{-H(\xx)}) 
\cdots
(e^{H(\xx)} P_\ell(z_\ell) e^{-H(\xx)})
\ket{-\ell}, \nonumber \\
& = \prod_{j=1}^\ell \prod_{k=1}^\infty
(1-\beta_j x_k)\cdot
\det \bigl[
\bra{-\ell} Q_j(w_j) e^{H(\xx)} P_i(z_i) e^{-H(\xx)} \ket{-\ell}
\bigr]_{i,j=1}^\ell.
\label{skewGrothendieckgeneratingdetformtochu}
\end{align}
Substituting 
\[
\begin{aligned}
e^{H(\xx)}P_i(z_i)e^{-H(\xx)}
&=
e^{H^\ast(A_{\lambda_i} \sqcup B_{(i,\ell]})}
\left(e^{H^\ast(\beta_i)}e^{H(\xx)}
\psi(z_i)
e^{-H(\xx)}e^{-H^\ast(\beta_i)}\right)
e^{-H^\ast(A_{\lambda_i} \sqcup B_{(i,\ell]})}\\
&=
\left(\sum_{m=0}^\infty h_{m}(A_{\lambda_i}\sqcup B_{(i,\ell]})z_i^{-m}\right)
\prod_{k=1}^\infty (1-\beta_i x_k)^{-1} \mathcal{G}(z_i,\beta_i) \psi(z_i),
\label{Cgauge}
\end{aligned}
\]
\[
\begin{aligned}
Q_j(w_j)& = e^{H^*(A_{\mu_j-1}\sqcup B_{[j,\ell]})}
\psi^*(w_j) 
e^{-H^*(A_{\mu_j-1}\sqcup B_{[j,\ell]})}
=\left(\sum_{k=0}^\infty (-1)^ke_{k}(A_{\mu_j-1}\sqcup B_{[j,\ell]})w_j^k\right) \psi^*(w_j), 
\end{aligned}
\]
and 
%Using
%\begin{align}
%e^{H^*(A_{\lambda_i} \sqcup B_{[i+1,\ell]} )} \psi(z_i)
%e^{-H^*(A_{\lambda_i} \sqcup B_{[i+1,\ell]})}
%=\prod_{k=i+1}^\ell (1-\beta_k z_i^{-1})^{-1} 
%\prod_{k=1}^{\lambda_i} (1+\alpha_k z_i^{-1})^{-1}
%\psi(z_i),
%\end{align}
%and the following relation (\cite{Iwao20}, Cor. 3.4)
%\begin{align}
%e^{H(\xx)} e^{H^*(\beta)} \psi(z) e^{-H^*(\beta)} e^{-H(\xx)}
%=\prod_{k=1}^\infty (1-\beta x_k)^{-1} \mathcal{G}(z,\beta) \psi(z),
%\end{align}
%we can rewrite $e^{H(\xx)} P_i(z_i) e^{-H(\xx)}$ as
%\begin{align}
%e^{H(\xx)} P_i(z_i)_i e^{-H(\xx)}
%&=e^{H(\xx)} 
%e^{H^*(A_{\lambda_i} \sqcup B_{[i,\ell]} )} \psi(z_i)
%e^{-H^*(A_{\lambda_i} \sqcup B_{[i,\ell]} )} e^{-H(\xx)} \nonumber \\
%&=e^{H(\xx)} e^{H^*(\beta_i)} e^{H^*(A_{\lambda_i} \sqcup B_{[i+1,\ell]} )}
% \psi(z_i) e^{-H^*(A_{\lambda_i} \sqcup B_{[i+1,\ell]} )}
% e^{-H^*(\beta_i)} e^{-H(\xx)} \nonumber \\
%&=\prod_{k=i+1}^\ell (1-\beta_k z_i^{-1})^{-1} 
%\prod_{k=1}^{\lambda_i} (1+\alpha_k z_i^{-1})^{-1}
%e^{H(\xx)} e^{H^*(\beta_i)} \psi(z_i) e^{-H^*(\beta_i)} e^{-H(\xx)} \nonumber \\
%&=\prod_{k=i+1}^\ell (1-\beta_k z_i^{-1})^{-1} 
%\prod_{k=1}^{\lambda_i} (1+\alpha_k z_i^{-1})^{-1}
%\prod_{k=1}^\infty (1-\beta_i x_k)^{-1} \mathcal{G}(z_i,\beta_i) \psi(z_i)
%\label{Cgauge}.
%\end{align}
%Inserting~\eqref{Cgauge} into~\eqref{skewGrothendieckgeneratingdetformtochu},
\[
\bra{-\ell} \psi^*(w_j) \psi(z_i) \ket{-\ell} = \sum_{p=-\ell}^\infty z_i^p w_j^p, 
\]
to~\eqref{skewGrothendieckgeneratingdetformtochu}, we obtain
%\begin{align}
%\Phi=
%\det \Bigg[
%\prod_{k=1}^{\mu_j-1} (1+\alpha_k w_j) \prod_{k=j}^\ell (1-\beta_k w_j)
%\prod_{k=i+1}^\ell (1-\beta_k z_i^{-1})^{-1} 
%\prod_{k=1}^{\lambda_i} (1+\alpha_k z_i^{-1})^{-1}
%\mathcal{G}(z_i,\beta_i)
%\bra{-\ell} \psi^\ast(w_j) \psi(z_i) \ket{-\ell}
%\Bigg]_{i,j=1}^\ell. \label{skewGrothendieckgeneratingdetformtochutwo}
%\end{align}
%Using
%\begin{align}
%Q_j(w_j)& = e^{H^*(A_{\mu_j-1})} e^{H^*(B_{[j,\ell]})} \psi^*_{\mu_j-j} e^{-H^*(B_{[j,\ell]})} e^{-H^*(A_{\mu_j-1})}
%=\prod_{k=1}^{\mu_j-1} (1+\alpha_k w_j) \prod_{k=j}^\ell (1-\beta_k w_j) \psi^*(w_j), \nonumber
%\end{align}
%and
%Equation~\eqref{skewGrothendieckgeneratingdetformtochutwo} can be rewritten as
\begin{align}
\Phi
%=\det \Bigg[
%\prod_{k=1}^{\mu_j-1} (1+\alpha_k w_j) \prod_{k=j}^\ell (1-\beta_k w_j) 
%%\nonumber \\
%%&\times 
%\prod_{k=i+1}^\ell (1-\beta_k z_i^{-1})^{-1} 
%\prod_{k=1}^{\lambda_i} (1+\alpha_k z_i^{-1})^{-1}
%\mathcal{G}(z_i,\beta_i)
%\sum_{p=-\ell}^\infty z_i^p w_j^p
%\Bigg]_{i,j=1}^\ell \nonumber\\
=\det \Bigg[
\left(\sum_{k=0}^\infty (-1)^ke_{k}(A_{\mu_j-1}\sqcup B_{[j,\ell]})w_j^k\right)
\left(\sum_{m=0}^\infty h_{m}(A_{\lambda_i}\sqcup B_{(i,\ell]})z_i^{-m}\right)
\mathcal{G}(z_i,\beta_i)
\sum_{p=-\ell}^\infty z_i^p w_j^p
\Bigg]_{i,j=1}^\ell. 
\label{skewGrothendieckgeneratingdetform}
\end{align}
Extracting the coefficients of $\prod_{i=1}^\ell z_i^{\lambda_i-i} \prod_{j=1}^\ell w_j^{\mu_j-j}$ of both hand sides of~\eqref{skewGrothendieckgeneratingdetform} gives the claimed determinant representation for $G_{\lambda \ds \mu}(\xx;\bal, \bbe)$.
\end{proof}

\begin{remark}
The summation ``$\sum_{k=0}^{\mu_j-j+\ell}$'' in the determinant formula for $G_{\lambda\ds\mu}(\xx;\bal,\bbe)$ (Proposition \ref{prop:determinant_formula_for_Gs}) cannot be replaced with ``$\sum_{k=0}^\infty$'' because $e_{\mu_j-j+\ell+1}(A_{\mu_j-1} \sqcup B_{[j,\ell]} )$ is not $0$ if $\mu_j=0$.
Compare to $e_{\mu_j-j+\ell+1}(A_{\mu_j} \sqcup B_{(j,\ell]} )=0$.
\end{remark}

Proposition~\ref{prop:determinant_formula_for_Gs} roughly differs from the Jacobi--Trudi formulas by replacing $h_k(\xx)$ with $\mathfrak{G}_{k}(\xx; \beta)$.
This yields a distinct formula since $\mathfrak{G}_k(\xx; \beta) = \sum_{a=0}^{\infty} \beta^a s_{k1^a}(\xx)$ is a sum over Schur functions with a hook shape (see, \textit{e.g.},~\cite{Lenart00}).

%%%%%%%%%%%%%%%%%%%%
\subsection{Decomposition of one set of parameters}
\label{sec:one_parameter_decomposition}

We can also answer~\cite[Prob.~12.2]{Yel17} by using
\begin{align*}
\dG_{\lambda}(\xx; \bal, \bbe) & = \bra{0} e^{H(\xx)} \ket{\lambda}_{[\bal,\bbe]}
= \sum_{\mu} \bra{0} e^{H(\xx)} \ket{\mu}_{[\bbe]} \cdot {}_{[\bbe]}\braket{\mu}{\lambda}_{[\bal,\bbe]}
\\ & = \sum_{\mu} b_{\lambda}^{\mu} \bra{0} e^{H(\xx)} \ket{\mu}_{[\bbe]}
= \sum_{\mu \subseteq \lambda} b_{\lambda}^{\mu} \dG_{\mu}(\xx; \bbe),
\end{align*}
where $\dG_{\mu}(\xx; \bbe) := \dG_{\mu}(\xx; 0, \bbe)$.

\begin{cor}
We have $b_{\lambda}^{\mu} \in \ZZ_{\geq 0}[\bal, \bbe]$.
\end{cor}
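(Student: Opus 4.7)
The plan is to show positivity of $b_\lambda^\mu$ by giving an explicit combinatorial formula derived from the determinantal expression in Lemma~\ref{lemma:refined_expand_det}. Expanding each entry via $h_m(X/Y) = \sum_k (-1)^{m-k} h_k(X) e_{m-k}(Y)$ already shows that each entry $h_{\lambda_i - \mu_j - i + j}(B_{[j,i)}/A_{\lambda_i-1})$ is separately a nonnegative polynomial in $\bal, \bbe$: the signs in $A_{\lambda_i - 1} = \{-\alpha_1, \ldots, -\alpha_{\lambda_i - 1}\}$ cancel the $(-1)^{m-k}$, producing positive monomials in the $\alpha_r$'s and $\beta_s$'s (with a parallel computation for the case $j > i$). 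The substantive task is then to control the alternating sum that arises when expanding the full $\ell \times \ell$ determinant.

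My approach is to apply the Lindstr\"om--Gessel--Viennot (LGV) lemma. I would interpret the $(i,j)$-entry as the weighted count of lattice paths from an $i$-th source to a $j$-th sink on a suitably weighted lattice, with horizontal-step weights drawn from $\{\beta_j, \ldots, \beta_{i-1}\}$ and vertical-step weights from $\{\alpha_1, \ldots, \alpha_{\lambda_i-1}\}$, and endpoint displacements chosen to produce the correct total degree $\lambda_i - \mu_j - i + j$. LGV would then identify $b_\lambda^\mu$ with the sum over nonintersecting path families, yielding a manifestly nonnegative formula. Equivalently, these path families should correspond bijectively to a class of (set-valued or reverse plane partition-style) tableaux of shape $\lambda/\mu$ with entries labeled by $\bal$ and $\bbe$, which is the combinatorial description promised in Section~\ref{sec:one_parameter_decomposition}.

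The main obstacle will be the row-dependence of the alphabets: the $\alpha$-alphabet for the $i$-th path depends on $\lambda_i$, and the $\beta$-alphabet on both $i$ and $j$. A uniform LGV setup does not immediately apply; one must work with a staircase-shaped lattice whose edge weights vary by region (bounded by the values $\lambda_i$), in the spirit of factorial or flagged Schur polynomial constructions, and then verify that the standard sign-reversing involution on intersecting path pairs still produces the required cancellation. If a direct LGV argument proves intractable, an alternative is to derive the combinatorial description from the tableau formula for $\dG_\lambda(\xx; \bal, \bbe)$ in~\cite{HJKSS21} by grouping tableaux via an appropriate $\bal$-independent projection that recovers the expansion into $\dG_\mu(\xx; \bbe)$; either route terminates in the desired positivity $b_\lambda^\mu \in \ZZ_{\geq 0}[\bal,\bbe]$.
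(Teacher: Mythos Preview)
Your plan is reasonable but substantially more laborious than necessary, and as written it is a sketch with acknowledged obstacles rather than a proof.

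The paper's argument is a one-liner: rather than working with the first determinant in Lemma~\ref{lemma:refined_expand_det}, it takes the conjugate formula~\eqref{eq:dualb_det_conjugate} and observes that this determinant matches one of the flagged Jacobi--Trudi formulas of~\cite[Thm.~1.7]{HJKSS21}. The positivity then comes for free from the combinatorial model already established there (see Remark~\ref{rem:combinatorai_description_b}: $b_\lambda^\mu$ is the generating function of flagged valued-set tableaux of shape $\lambda'/\mu'$ with entries in row~$i$ strictly less than~$i$). No LGV argument needs to be reinvented.

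Your LGV route is not wrong in spirit, but the obstacle you flag --- the row- and column-dependent alphabets $B_{[j,i)}$ and $A_{\lambda_i-1}$ --- is genuine, and handling it correctly essentially reconstructs the tableau model that~\cite{HJKSS21} already provides. Your fallback (``derive the combinatorial description from the tableau formula for $\dG_\lambda$ in~\cite{HJKSS21} by grouping tableaux'') is closer to the paper's proof, but still indirect: there is no need to group tableaux when the determinants themselves already match. Also note that your preliminary observation that each matrix entry is nonnegative, while correct, contributes nothing to the determinant's sign; all the work lies in the cancellation you defer to LGV. The efficient move is simply to recognize $b_\lambda^\mu$ as an object whose positivity is already in the literature.
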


\begin{proof}
Comparing~\eqref{eq:dualb_det_conjugate} and the flagged Jacobi--Trudi formula from~\cite[Thm~1.7]{HJKSS21} and the corresponding known positivity formula yields the result.
\end{proof}

\begin{remark}
\label{rem:combinatorai_description_b}
From the combinatorial description in~\cite{HJKSS21}, we can interpret $b_{\lambda}^{\mu}$ as the generating function of all flagged valued-set tableau of (skew) shape $\lambda'/\mu'$ such that the entries in row $i$ is strictly less than~$i$
\end{remark}

\begin{ex}
We have
\[
\dG_{222}(\xx; \bal, \bbe) = \dG_{222}(\xx; \bbe) + \alpha_1 \dG_{221}(\xx; \bbe) + \alpha_1 (\alpha_1 + \beta_2) \dG_{211}(\xx; \bbe) + \alpha_1 (\alpha_1 + \beta_1) (\alpha_1 + \beta_2) \dG_{111}(\xx; \bbe).
\]
In particular, we note that the coefficient of $\dG_{111}(\xx; \bbe)$ when specialized to $\bal = \alpha$ and $\bbe = \beta$ is
\[
\alpha^3 + 2 \alpha^2 \beta + \alpha \beta^2,
\]
which shows that~\cite[Prob.~12.2(b)]{Yel17} is answered in the negative.
\end{ex}

\begin{ex}
We can also answer~\cite[Prob.~12.2(c)]{Yel17} in the negative.
Let $\lambda = 54321$ and $\mu = 51111$, which are both self-conjugate partitions.
Taking the specialization $\bal = \alpha$ and $\bbe = \beta$, we have
\[
b_{\lambda}^{\mu} = b_{\lambda'}^{\mu'} = \alpha^6 + 3 \alpha^5 \beta + 3 \alpha^4\beta^2 + \alpha^3 \beta^3,
\]
which has a term of the form $(\alpha \beta)^k$ (a ``free term'' as defined in~\cite[Sec.~12]{Yel17}).
This is still answered in the negative even if both partitions are not self-conjugate.
Consider $\lambda = 55533 = \lambda'$ and $\mu = 53111$, then we have
\begin{align*}
\beta_{\lambda}^{\mu} & = 6 \alpha^{10} + 28 \alpha^9 \beta + 53 \alpha^8 \beta^2 + 52 \alpha^7 \beta^3 + 28 \alpha^6 \beta^4 + 8 \alpha^5 \beta^5 + \alpha^4 \beta^6,
\\
\beta_{\lambda'}^{\mu'} & = 10 \alpha^{10} + 44 \alpha^9 \beta + 79 \alpha^8 \beta^2 + 74 \alpha^7 \beta^3 + 38 \alpha^6 \beta^4 + 10 \alpha^5 \beta^5 + \alpha^4 \beta^6.
\end{align*}
\end{ex}

Additionally, we have $\G_{\lambda}(\xx; \bal, \bbe) = \sum_{\lambda \subseteq \mu} B_{\lambda}^{\mu} \G_{\mu}(\xx; \bbe)$, where $\G_{\mu}(\xx; \bbe) := \G_{\mu}(\xx; 0, \bbe)$.

\begin{cor}
\label{cor:b_det_combinatorial}
We have
$
B_{\lambda}^{\mu} \in \ZZ_{\geq 0}[-\bal, -\bbe].
$
\end{cor}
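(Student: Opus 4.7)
The plan is to mirror the proof of the preceding corollary for $b_{\lambda}^{\mu}$, but starting from Equation~\eqref{eq:b_det_conjugate} of Lemma~\ref{lemma:refined_expand_det}, which gives
\[
B_{\lambda}^{\mu} = (-1)^{\abs{\mu} - \abs{\lambda}} \det \bigl[ h_{-\lambda'_i + \mu'_j + i - j}(B_{[\mu'_j,\lambda'_i]} / A_{i-1}) \bigr]_{i,j=1}^{\ell'}.
\]
First I would apply the substitution $\bal \mapsto -\bal$, $\bbe \mapsto -\bbe$, which sends $A_{i-1} \mapsto -A_{i-1}$ and $B_{[\mu'_j,\lambda'_i]} \mapsto -B_{[\mu'_j,\lambda'_i]}$. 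Using the plethystic identity $h_m(-X/{-Y}) = (-1)^m h_m(Y/X)$, each entry picks up the sign $(-1)^{-\lambda'_i+\mu'_j+i-j}$. Factoring these signs out of the rows and columns of the determinant contributes an overall factor of $(-1)^{\sum_i(i-\lambda'_i)+\sum_j(\mu'_j - j)} = (-1)^{\abs{\mu}-\abs{\lambda}}$, which cancels the prefactor $(-1)^{\abs{\mu}-\abs{\lambda}}$. Hence
\[
B_{\lambda}^{\mu}(-\bal, -\bbe) = \det \bigl[ h_{-\lambda'_i + \mu'_j + i - j}(A_{i-1} / B_{[\mu'_j,\lambda'_i]}) \bigr]_{i,j=1}^{\ell'}.
\]

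Next I would recognize the right-hand side as (a specialization at $\xx = \emptyset$ of) a flagged Jacobi--Trudi determinant of the form appearing in~\cite[Thm.~1.7]{HJKSS21}. The match comes from reading $i$ as the row index of a flagged canonical Grothendieck determinant for the skew shape $\mu'/\lambda'$: the row flag $A_{i-1}$ depends only on $i$, while the column-dependent piece $B_{[\mu'_j,\lambda'_i]}$ has the precise range that appears in the flagged dual formula after the standard reindexing between the $h$-form and $e$-form. (This is the same comparison already invoked in the proof of the corollary for $b_{\lambda}^{\mu}$, applied now to the Grothendieck version rather than the dual Grothendieck version.)

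Finally, since the flagged canonical Grothendieck polynomials admit a Schur-positive (indeed tableau-positive) combinatorial description in terms of set-valued tableaux with the prescribed flags~\cite{HJKSS21}, the resulting polynomial lies in $\ZZ_{\geq 0}[\bal,\bbe]$ in the substituted variables, i.e.\ $B_{\lambda}^{\mu} \in \ZZ_{\geq 0}[-\bal,-\bbe]$. The main obstacle is simply the bookkeeping in the sign cancellation and in matching the alphabets $A_{i-1}$ and $B_{[\mu'_j,\lambda'_i]}$ with the flag conventions of~\cite{HJKSS21}; both are routine once the outer sign is absorbed as above.
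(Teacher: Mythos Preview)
Your sign manipulation is correct and the overall strategy is sound, but note that your route differs from the paper's. You start from Equation~\eqref{eq:b_det_conjugate} and perform the substitution $\bal\mapsto-\bal$, $\bbe\mapsto-\bbe$ to absorb the sign $(-1)^{\abs{\mu}-\abs{\lambda}}$, obtaining
\[
B_{\lambda}^{\mu}(-\bal,-\bbe)=\det\bigl[h_{-\lambda'_i+\mu'_j+i-j}(A_{i-1}/B_{[\mu'_j,\lambda'_i]})\bigr]_{i,j=1}^{\ell'} ,
\]
and then argue that this matches a flagged Jacobi--Trudi determinant from~\cite[Thm.~1.7]{HJKSS21}. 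The paper instead works directly with the non-conjugate formula~\eqref{eq:b_det}, transposes $i\leftrightarrow j$, and identifies the result with a flagged dual Grothendieck determinant after the explicit variable assignment $x_{m-i}=\alpha_i$ (so the flag alphabet in row $i$ becomes $X_{[m-\lambda_i+1,m]}$ in the notation of~\cite{HJKSS21}); the corresponding combinatorial positivity is then that of flagged plane partitions with entries in row $i$ bounded by $\lambda_i$. Your route is the closer analogue of the $b_{\lambda}^{\mu}$ argument (both use the conjugate determinants from Lemma~\ref{lemma:refined_expand_det}), whereas the paper deliberately switches to the non-conjugate determinant for $B_{\lambda}^{\mu}$, which makes the matching with~\cite{HJKSS21} cleaner because no sign substitution is needed and the flag is read off directly from $A_{\lambda_j}$.

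One caution: your final matching step is genuinely different from the $b_{\lambda}^{\mu}$ case and not just ``the same comparison.'' After your substitution the $B$-interval is closed $[\mu'_j,\lambda'_i]$ rather than open $(\mu'_j,\lambda'_i)$, the roles of inner and outer shape are swapped (the skew shape is $\mu'/\lambda'$, not $\lambda'/\mu'$), and the $A$-flag sits on the ``inner'' index. This does correspond to a flagged formula in~\cite{HJKSS21}, but to a different one than for $b_{\lambda}^{\mu}$; you should identify it explicitly (it is the one underlying the plane-partition description in the paper's subsequent remark, transported through conjugation) rather than asserting the analogy. With that identification in place your argument is complete.
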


\begin{proof}
Comparing~\eqref{eq:b_det} and the flagged Jacobi--Trudi formula from~\cite[Thm~1.7]{HJKSS21} after taking the transpose $i \leftrightarrow j$ and $X_{[m-\lambda_i+1,m]}$ (in the notation of~\cite{HJKSS21}), for some fixed $m$ with $x_{m-i} = \alpha_i$, yields the equality.
\end{proof}

\begin{remark}
Similar to Remark~\ref{rem:combinatorai_description_b}, we can describe $B_{\lambda}^{\mu}$ as the generating function of flagged plane partitions with the entries in row $i$ being strictly less than $\lambda_i$.
\end{remark}

A natural question would be to compare these flagged plane partitions with the recording tableau from the uncrowding algorithm of~\cite[Def.~3.5]{PPPS20}.
However, we must be careful about the signs, because while the signs in our formulas only depend on the degree, they appear in opposite ways between the combinatorial definition of $\G_{\lambda}(\xx; \alpha, \beta)$ and the expansion coefficients $B_{\lambda}^{\mu}$.
Therefore, it is not possible to make a combinatorial comparison between the two constructions.
Compare this with~\cite[Cor.~4.9]{PPPS20}, which is very close to our expansion formula from Corollary~\ref{cor:b_det_combinatorial}.

\begin{ex}
We compute
\begin{align*}
G_{11}(\xx; \bal, \bbe) & = \G_{11}(\xx; \bbe) + \alpha_1 \G_{21}(\xx; \bbe) + \alpha_1 \beta_1 \G_{22}(\xx; \bbe) + \cdots
\\ & = s_{11} - \alpha_1 s_{21} - (\beta_1 + \beta_2) s_{111}
\\ & \hspace{20pt} + \alpha_1^2 s_{31} + (\alpha_1\beta_1 + \alpha_1\beta_2) s_{2111} + (\beta_1^2 + \beta_1 \beta_2 + \beta_2^2) s_{1111} + \cdots.
\end{align*}
In particular, the coefficient of $s_{22}(\xx)$ is $0$, which comes from the cancellation of the term in
\[
\G_{21}(\xx; \bbe) = s_{21} - \beta_1 s_{22} + \cdots,
\qquad\qquad
\G_{22}(\xx; \bbe) = s_{22} + \cdots.
\]
\end{ex}

We could additionally compute similar expansion formulas for $\G_{\lambda}(\xx; \bal, \bbe)$ into $\wG_{\mu}(\xx; \bal)$ and for $\dG_{\lambda}(\xx; \bal, \bbe)$ into $\dwG_{\mu}(\xx; \bal)$ with similar tableau formulas.
We leave the details for the interested reader.

%%%%%%%%%%%%%%%%%%%%
\subsection{Integral formulas}

The Jacobi--Trudi formulas given in Theorem~\ref{thm:jacobi_trudi} can be converted to integral formulas.

\begin{thm}
\label{thm:integral_G}
For $\ell(\lambda) \le \ell$, we have
\begin{subequations}
\begin{align}
&g_{\lambda/\mu}(\xx_{n};\bal,\bbe)
\nonumber \\
& \hspace{15pt} =\det\Bigg[
\frac{1}{2 \pi i}
\oint_{\gamma_r}
\frac{
\prod_{k=1}^{\lambda_i-1}(1+\alpha_k w) \prod_{k=1}^{j-1} (1-\beta_k w)
}{
\prod_{k=1}^{\mu_j}(1+\alpha_k w) \prod_{k=1}^{i-1} (1-\beta_k w)
\prod_{m=1}^n (1-x_m w) w^{\lambda_i-\mu_j-i+j+1}} dw
\Bigg]_{i,j=1}^\ell, \label{eq:dualintegral}
\\
&G_{\lambda/\mu}(\xx_{n};\bal,\bbe)=
\prod_{i=1}^\ell \prod_{j=1}^n (1-\beta_i x_j)
\nonumber \\
& \hspace{15pt} \times\det\Bigg[
\frac{1}{2 \pi i}
\oint_{\widetilde{\gamma}_r}
\frac{
\prod_{k=1}^{{\mu_j}
}(1+\alpha_k w^{-1}) \prod_{k=1}^{i-1} (1-\beta_k w^{-1})
}{
\prod_{k=1}^{\lambda_i}(1+\alpha_k w^{-1}) \prod_{k=1}^{j} (1-\beta_k w^{-1})
\prod_{m=1}^n (1-x_m w) w^{\lambda_i-\mu_j-i+j+1}} dw
\Bigg]_{i,j=1}^\ell, \label{integralGone} \\
&G_{\lambda\ds\mu}(\xx_{n};\bal,\bbe)=
\prod_{i=1}^\ell \prod_{j=1}^n (1-\beta_i x_j)
\nonumber \\
& \hspace{15pt}\times\det\Bigg[
\frac{1}{2 \pi i}
\oint_{\widetilde{\gamma}_r}
\frac{
\prod_{k=1}^{\mu_j-1}(1+\alpha_k w^{-1}) \prod_{k=1}^{i-1} (1-\beta_k w^{-1})
}{
\prod_{k=1}^{\lambda_i}(1+\alpha_k w^{-1}) \prod_{k=1}^{j-1} (1-\beta_k w^{-1})
\prod_{m=1}^n (1-x_m w) w^{\lambda_i-\mu_j-i+j+1}} dw
\Bigg]_{i,j=1}^\ell, \label{integralGtwo}
\end{align}
\end{subequations}
where $\gamma_r$ is a circle centered at the origin with radius $r$ satisfying $0 < r < \abs{x_m^{-1}}$ for $m=1,\dotsc,n$ and $0 < r < \abs{\alpha_1^{-1}}, \abs{\alpha_2^{-1}},\dotsc,\abs{\beta_1^{-1}}, \abs{\beta_2^{-1}},\ldots$ and $\widetilde{\gamma}_r$ is a circle centered at the origin with radius $r$ satisfying $0 < r < \abs{x_m^{-1}}$ for $m = 1, \dotsc, n$ and $r > \abs{\alpha_1}, \abs{\alpha_2}, \dotsc,\abs{\beta_1}, \abs{\beta_2}, \ldots$.
\end{thm}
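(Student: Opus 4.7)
The plan is to derive each integral formula entrywise from the Jacobi--Trudi determinants in Theorem~\ref{thm:jacobi_trudi}, by rewriting each matrix entry as a contour integral using the standard generating-function identities for complete homogeneous symmetric functions. Since the integration variable is local to a single matrix entry, it suffices to prove the identity at the level of a single entry and form the determinant afterwards.

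For~\eqref{eq:dualintegral}, I use the generating-function identity
\[
\sum_{d\ge 0} h_d\bigl(\xx_n \sqcup A_{[\lambda_i,\mu_j]} \sqcup B_{[j,i)}\bigr)\, w^d = \frac{\prod_{k=1}^{\lambda_i-1}(1+\alpha_k w)\prod_{k=1}^{j-1}(1-\beta_k w)}{\prod_{k=1}^{\mu_j}(1+\alpha_k w)\prod_{k=1}^{i-1}(1-\beta_k w)\prod_{m=1}^{n}(1-x_m w)},
\]
which holds uniformly across the cases $\lambda_i \le \mu_j$ versus $\lambda_i > \mu_j$ (and similarly for $i$ versus $j$) after applying the convention $A_{[i,j)}=\emptyset/A_{[j,i)}$ and canceling common factors. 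Since $r$ is smaller than the modulus of every pole of the generating function away from $0$, the right-hand side is analytic inside $\gamma_r$ except at $w=0$, so the contour integral on the right-hand side of~\eqref{eq:dualintegral} extracts the coefficient of $w^{\lambda_i-\mu_j-i+j}$ and recovers the Jacobi--Trudi entry.

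For~\eqref{integralGone} and~\eqref{integralGtwo}, the same strategy applies but with the supersymmetric $\ds$-notation handled via the bilateral Laurent expansion
\[
\sum_{d\in\ZZ} h_d(\xx_n \ds W)\, w^d = \prod_{m=1}^{n}\frac{1}{1-x_m w}\cdot \sum_{b\ge 0} h_b(W)\, w^{-b},
\]
which converges in the annulus $\max_{\alpha\in W}|\alpha|<|w|<\min_{m}|x_m|^{-1}$---precisely where $\widetilde{\gamma}_r$ sits. Specializing $W=A_{(\mu_j,\lambda_i]}\sqcup B_{[i,j]}$ for~\eqref{integralGone} and $W=A_{[\mu_j,\lambda_i]}\sqcup B_{[i,j)}$ for~\eqref{integralGtwo} produces the respective index ranges ($k\le\mu_j$ vs.\ $k\le\mu_j-1$ in the numerator, and $k\le j$ vs.\ $k\le j-1$ in the denominator), and extracting the $w^{\lambda_i-\mu_j-i+j}$ Laurent coefficient yields the Jacobi--Trudi entry. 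The overall prefactor $C=\prod_{i=1}^{\ell}\prod_{j=1}^{n}(1-\beta_i x_j)$ is inherited directly from Theorem~\ref{thm:jacobi_trudi}.

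The main obstacle is organizational rather than computational: I must carefully verify that the uniform generating-function formula remains valid across the full case split imposed by $A_{[i,j)}=\emptyset/A_{[j,i)}$ (and the analog for $B$), and confirm that $\widetilde{\gamma}_r$ really sits in the common annulus of convergence of both factors in the $\ds$-case. Once these bookkeeping points are settled, no further input beyond Theorem~\ref{thm:jacobi_trudi} is required.
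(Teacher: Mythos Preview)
Your proposal is correct and follows essentially the same route as the paper: start from the Jacobi--Trudi entries in Theorem~\ref{thm:jacobi_trudi} and recover each one as the appropriate coefficient of a rational generating function via a contour integral. The only cosmetic difference is that the paper first splits $h_d(\xx_n\sqcup Y)=\sum_k h_{d-k}(\xx_n)h_k(Y)$, applies the integral formula~\eqref{integralformhk} to the $\xx_n$-factor alone, and then resums $\sum_{k\ge 0}h_k(Y)w^k$ into the rational function, whereas you treat all variables together from the start; the two computations are identical after one line. Your discussion of the annulus for $\widetilde{\gamma}_r$ in the $\ds$ case is exactly the point the paper leaves implicit when it says the remaining identities are proved ``similarly.''
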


\begin{proof}
We prove~\eqref{eq:dualintegral} as the other identities are proved similarly.

Let us  express $h_{\lambda_i-\mu_j-i+j}(\xx_{n} \sqcup A_{{[} \lambda_i,\mu_j {]}} \sqcup B_{{[}j,i {)}})$ in
\begin{equation*}
g_{\lambda/\mu}(\xx_{n};\bal,\bbe) = \det \bigl[ h_{\lambda_i-\mu_j-i+j}(\xx_{n} \sqcup A_{[ \lambda_i,\mu_j]} \sqcup B_{[j,i)}) \bigr]_{i,j=1}^\ell,% \nonumber
\end{equation*}
derived in Theorem \ref{thm:jacobi_trudi}
as integrals.
Using $h_k(\xx_{n})=0$ for $k<0$ and
\begin{equation}
h_k(\xx_{n})=\frac{1}{2 \pi i} \oint_{\gamma_r}
\frac{1}{\prod_{m=1}^n (1-x_m w) w^{k+1}} dw, \label{integralformhk}
\end{equation}
 where $\gamma_r$ is a circle centered at the origin with radius $r$
satisfying $0 < r < |x_m^{-1}|$, $m=1,\dots,n$,
we have
\begin{align}
%&
h_{\lambda_i-\mu_j-i+j}(\xx_{n}
\sqcup A_{{[} \lambda_i,\mu_j {]}}
\sqcup B_{{[}j,i {)}}) 
%\nonumber \\
&=\sum_{k=0}^{\infty}
h_{\lambda_i-\mu_j-i+j-k}(\xx_{n})
h_k(
A_{{[} \lambda_i,\mu_j {]}}
\sqcup B_{{[}j,i {)}}
) \nonumber \\
&=\sum_{k=0}^{\infty} \frac{1}{2 \pi i}
\oint_{\gamma_r}
\frac{h_k(
A_{{[} \lambda_i,\mu_j {]}}
\sqcup B_{{[}j,i {)}}
)}{\prod_{m=1}^n (1-x_m w) w^{\lambda_i-\mu_j-i+j-k+1}} dw
\nonumber \\
&=\frac{1}{2 \pi i}
\oint_{\gamma_r}
\frac{ \sum_{k=0}^{\infty} h_k(
A_{{[} \lambda_i,\mu_j {]}}
\sqcup B_{{[}j,i {)}}
) w^k
}{\prod_{m=1}^n (1-x_m w) w^{\lambda_i-\mu_j-i+j+1}} dw. \nonumber
\end{align}
One can also show the sum in the integrand above can be rewritten as
\begin{align}
\displaystyle
\sum_{k=0}^{\infty} h_k(
A_{{[} \lambda_i,\mu_j {]}}
\sqcup B_{{[}j,i {)}}
) w^k
=\frac{\prod_{k=1}^{\lambda_i-1}(1+\alpha_k w) \prod_{k=1}^{j-1} (1-\beta_k w) }{\prod_{k=1}^{\mu_j}(1+\alpha_k w) \prod_{k=1}^{i-1} (1-\beta_k w)}. \nonumber
\end{align}
%For example, when $i>j$ and $\mu_j < \lambda_i$,
%\begin{align}
%\displaystyle
%\sum_{k=0}^{\infty} h_k(
%A_{{[} \lambda_i,\mu_j {]}}
%\sqcup B_{{[}j,i {)}}
%) w^k&=
%\sum_{k=0}^{\infty} h_k(
%B_{{[}j,i {)}} / A_{{[}\mu_j+1,\lambda_i )}
%) w^k \nonumber \\
%&=\sum_{k=0}^{\infty} \sum_{n=0}^k
%h_{k-n}(B_{{[}j,i {)}}) e_n(-A_{{[}\mu_j+1,\lambda_i )}) w^k
%\nonumber \\
%&=\sum_{n=0}^{\infty} \sum_{k=n}^\infty
%h_{k-n}(B_{{[}j,i {)}}) e_n(-A_{{[}\mu_j+1,\lambda_i )}) w^k
%\nonumber \\
%&=\sum_{n=0}^{\infty} \sum_{m=0}^\infty
%h_{m}(B_{{[}j,i {)}}) e_n(-A_{{[}\mu_j+1,\lambda_i )}) w^{m+n}
%\nonumber \\
%&=\frac{
%\prod_{k=\mu_j+1}^{\lambda_i-1} (1+\alpha_k w)
%}{\prod_{k=j}^{i-1} (1-\beta_k w)}.
%\end{align}
From the computations above, we have
\begin{align}
&h_{\lambda_i-\mu_j-i+j}(\xx_{n}
\sqcup A_{{[} \lambda_i,\mu_j {]}}
\sqcup B_{{[}j,i {)}}) \nonumber \\
&=\frac{1}{2 \pi i}
\oint_{\gamma_r}
\frac{
\prod_{k=1}^{\lambda_i-1}(1+\alpha_k w) \prod_{k=1}^{j-1} (1-\beta_k w)
}{
\prod_{k=1}^{\mu_j}(1+\alpha_k w) \prod_{k=1}^{i-1} (1-\beta_k w)
\prod_{m=1}^n (1-x_m w) w^{\lambda_i-\mu_j-i+j+1}} dw, \nonumber
\end{align}
where we further impose
$r < \abs{\alpha_1^{-1}}, \abs{\alpha_2^{-1}}, \dotsc, \abs{\beta_1^{-1}}, \abs{\beta_2^{-1}}, \ldots$,
so that the origin is the only pole surrounded by the integration contour.
Hence we get~\eqref{eq:dualintegral}.
\end{proof}

The other integral formulas below are proven similarly except using the dual Jacobi--Trudi formulas and identities such as
\[
\sum_{k \ge 0} e_k(A_{{[} i,j {)}}
\sqcup B_{{(}\mu_j^\prime,\lambda_i^\prime {)}}
) w^k
=\frac{
\prod_{k=1}^{j-1}(1-\alpha_k w) \prod_{k=1}^{\lambda_i^\prime-1} (1+\beta_k w)
}{
\prod_{k=1}^{i-1}(1-\alpha_k w) \prod_{k=1}^{\mu_j^\prime} (1+\beta_k w)}.
\]

\begin{thm}
\label{thm:integral_Gconj}
For $\ell^\prime \ge \lambda_1$,
we have
\begin{subequations}
\begin{align}
g_{\lambda/\mu}({\bf x}_n;\bal, \bbe) & = \det\Bigg[
\frac{1}{2 \pi i}
\oint_{\gamma_r}
\frac{
\prod_{k=1}^{j-1}(1-\alpha_k w) \prod_{k=1}^{\lambda_i^\prime-1} (1+\beta_k w)
\prod_{m=1}^n (1+x_m w)
}{
\prod_{k=1}^{i-1}(1-\alpha_k w) \prod_{k=1}^{\mu_j^\prime} (1+\beta_k w)
w^{\lambda_i^\prime-\mu_j^\prime-i+j+1}} dw
\Bigg]_{i,j=1}^{\ell^\prime}, \label{gdualintegral}
\\
G_{\lambda/\mu}(\xx_{n};\bal,\bbe) & =
\prod_{i=1}^{\ell^\prime} \prod_{j=1}^n (1+\alpha_i x_j)^{-1}
\nonumber \\
&\hspace{10pt}\times\det\Bigg[
\frac{1}{2 \pi i}
\oint_{\widetilde{\gamma}_r}
\frac{
\prod_{k=1}^{i-1}(1-\alpha_k w^{-1}) \prod_{k=1}^{\mu_j^\prime}
(1+\beta_k w^{-1}) \prod_{m=1}^n (1+x_m w)
}{
\prod_{k=1}^{j}(1-\alpha_k w^{-1}) \prod_{k=1}^{\lambda_i^\prime}
(1+\beta_k w^{-1})
w^{\lambda_i^\prime-\mu_j^\prime-i+j+1}} dw
\Bigg]_{i,j=1}^{\ell^\prime}, \label{integralGthree} \\
G_{\lambda\ds\mu}(\xx_{n};\bal,\bbe) & =
\prod_{i=1}^{\ell^\prime} \prod_{j=1}^n (1+\alpha_i x_j)^{-1}
\nonumber \\
&\hspace{10pt}\times\det\Bigg[
\frac{1}{2 \pi i}
\oint_{\widetilde{\gamma}_r}
\frac{
\prod_{k=1}^{i-1}(1-\alpha_k w^{-1}) \prod_{k=1}^{ \mu_j^\prime-1}
(1+\beta_k w^{-1}) \prod_{m=1}^n (1+x_m w)
}{
\prod_{k=1}^{j-1}(1-\alpha_k w^{-1}) \prod_{k=1}^{\lambda_i^\prime}
(1+\beta_k w^{-1})
w^{\lambda_i^\prime-\mu_j^\prime-i+j+1}} dw
\Bigg]_{i,j=1}^{\ell^\prime}, \label{integralGfour}
\end{align}
\end{subequations}
where the contour $\gamma_r$, respectively~$\widetilde{\gamma}_r$, is a circle centered at the origin with radius $r$ satisfying $0 < r < \abs{\alpha_1^{-1}},\abs{\alpha_2^{-1}},\dotsc,\abs{\beta_1^{-1}},\abs{\beta_2^{-1}},\ldots$, respectively $r > \abs{\alpha_1},\abs{\alpha_2},\dotsc,\abs{\beta_1},\abs{\beta_2},\ldots$.
\end{thm}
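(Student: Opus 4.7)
My plan is to mimic the proof of Theorem~\ref{thm:integral_G} closely, but starting from the dual ($e$-based) Jacobi--Trudi formulas in Theorem~\ref{thm:jacobi_trudi} rather than the $h$-based ones, and invoking the $e$-analogue of the integral representation~\eqref{integralformhk}. For~\eqref{gdualintegral}, I begin with
\[
g_{\lambda/\mu}(\xx_n;\bal,\bbe) = \det \bigl[ e_{\lambda'_i-\mu'_j-i+j}\bigl(\xx_n \sqcup A_{[i,j)} \sqcup B_{(\mu'_j,\lambda'_i)}\bigr) \bigr]_{i,j=1}^{\ell'}
\]
and use the Cauchy integral
\[
e_k(\xx_n) = \frac{1}{2\pi i}\oint_{\gamma_r} \frac{\prod_{m=1}^n(1+x_m w)}{w^{k+1}}\, dw,
\]
valid for any positively oriented circle around the origin since $\prod_m(1+x_m w)$ is a polynomial in $w$. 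Writing each matrix entry as the convolution
\[
e_{\lambda'_i-\mu'_j-i+j}\bigl(\xx_n \sqcup A_{[i,j)} \sqcup B_{(\mu'_j,\lambda'_i)}\bigr) = \sum_{p\geq 0} e_{\lambda'_i-\mu'_j-i+j-p}(\xx_n)\, e_p\bigl(A_{[i,j)} \sqcup B_{(\mu'_j,\lambda'_i)}\bigr),
\]
substituting the integral representation, and using the generating-function identity
\[
\sum_{p\geq 0} e_p\bigl(A_{[i,j)} \sqcup B_{(\mu'_j,\lambda'_i)}\bigr)\, w^p = \frac{\prod_{k=1}^{j-1}(1-\alpha_k w)\prod_{k=1}^{\lambda'_i-1}(1+\beta_k w)}{\prod_{k=1}^{i-1}(1-\alpha_k w)\prod_{k=1}^{\mu'_j}(1+\beta_k w)}
\]
produces exactly the integrand in~\eqref{gdualintegral}. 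The constraints on $r$ are imposed so that none of the poles at $w=\alpha_k^{-1}$ or $w=-\beta_k^{-1}$ introduced by the denominator factors are enclosed, so the origin is the only pole inside $\gamma_r$.

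For the two $G$-variants~\eqref{integralGthree} and~\eqref{integralGfour}, I apply the same recipe to the corresponding $e$-Jacobi--Trudi formulas with the supersymmetric $\ds$-operation, namely
\[
G_{\lambda/\mu}(\xx_n;\bal,\bbe) = C' \det \bigl[ e_{\lambda'_i-\mu'_j-i+j}\bigl(\xx_n \ds (A_{[j,i)} \sqcup B_{(\lambda'_i,\mu'_j)})\bigr) \bigr]_{i,j=1}^{\ell'}
\]
and the analogous formula for $G_{\lambda\ds\mu}$; the prefactor $\prod_i\prod_j(1+\alpha_i x_j)^{-1}$ is exactly the $C'$ of Theorem~\ref{thm:jacobi_trudi}. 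The essential difference from the $g$ case is that the plethystic generating function
\[
\sum_{m\in\ZZ} e_m(\xx_n\ds Y)\, w^m = \prod_{m=1}^n(1+x_m w) \prod_{y\in Y}(1+y w^{-1})
\]
now contains both positive and negative powers of $w$, and writing $\prod_{y\in Y}(1+y w^{-1})$ as a rational expression in $w$ of the form $\prod_{k=1}^{i-1}(1-\alpha_k w^{-1})/\prod_{k=1}^{j}(1-\alpha_k w^{-1})$ and similarly for the $\beta$'s introduces poles at $w=\alpha_k$ and $w=-\beta_k$. Cauchy's theorem then demands a contour enclosing the origin together with these extra poles, which is precisely the role of $\widetilde{\gamma}_r$ with radius $r>\abs{\alpha_k},\abs{\beta_k}$; the condition $r<\abs{x_m^{-1}}$ remains formally (matching the presentation in Theorem~\ref{thm:integral_G}) even though $\prod_m(1+x_m w)$ is polynomial.

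I expect the main obstacle to be the careful index bookkeeping needed to match the precise ranges of the products in the integrands: whether the upper limit on an $\alpha$-product is $j$ or $j-1$, and likewise whether a $\beta$-product goes up to $\mu'_j$ or $\mu'_j-1$, depends sensitively on whether the relevant set indices are open, closed, or half-open, and on whether one is treating $G_{\lambda/\mu}$ or $G_{\lambda\ds\mu}$. Once the generating function identities are correctly set up in each of the four cases, the rest is a routine repetition of the argument of Theorem~\ref{thm:integral_G}, with $h_k$ replaced by $e_k$ throughout and contour radii adjusted as described.
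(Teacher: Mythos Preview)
Your proposal is correct and follows essentially the same approach as the paper: the paper states that these formulas ``are proven similarly except using the dual Jacobi--Trudi formulas and identities such as'' the $e$-generating-function identity you wrote down, which is precisely the outline you gave. Your awareness of the index bookkeeping as the only real hazard, and your correct observation that the $x$-radius constraint is vacuous in the $e$ case since $\prod_m(1+x_m w)$ is polynomial, are both on point.
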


%=====================================================================
\section{Free-fermion presentation of the flagged version}
\label{sec:flagged}

In this section, we give a free-fermion presentation of a special case of the flagged version of canonical Grothendieck polynomials.
We take as our definition the Jacobi--Trudi determinant formula is derived in \cite[Thm.~6.7]{HJKSS21}.

\begin{dfn}[{\cite[Thm.~6.7]{HJKSS21}}]
If ${\mathbf r}=(r_1,\dots,r_\ell)$ and ${\mathbf s}=(s_1,\dots,s_\ell)$
are sequences of positive integers satisfying $r_i \leq r_{i+1}$ and $s_i \leq s_{i+1}$ whenever
$\mu_i < \lambda_{i+1}$ for $1 \leq i \leq \ell-1$, then define
\begin{align}
G_{\lambda/\mu}^{\mathrm{row}({\mathbf r},{\bf s})}(\xx;\bal, \bbe) =
\prod_{i=1}^\ell \prod_{k=r_i}^{s_i} (1-\beta_i x_k)
\det \bigl[
h_{\lambda_i-\mu_j-i+j} \bigl( \xx_{[r_j,s_i]} \ds (A_{(\mu_j, \lambda_i]} \sqcup B_{[i,j]}) \bigr)
\bigr]_{i,j=1}^{\ell}.
\label{eq:flaggedonedetrep}
\end{align}
\end{dfn}

We give a free-fermion presentation for $G_{\lambda/\mu}^{\mathrm{row}({\mathbf r},{\bf s})}$ for  the case $r_1=\cdots=r_\ell=1$ and $s_i \leq s_{i+1}$ whenever $\mu_i < \lambda_{i+1}$ for $1 \leq i \leq \ell-1$.
We first give another determinant representation which is an extension of Proposition~\ref{prop:determinant_formula_for_Gs} to flagged version, following the idea of \cite[Section~6.3]{HJKSS21}.

Let $\mathfrak{G}_k^{[s/r]}(\xx,\beta)$ be the \defn{flagged $\beta$-Grothendieck polynomial} that is defined as a coefficient of the following generating function
\[
\mathcal{G}^{[s/r]}(z,\beta) := \sum_{k \in \ZZ} \mathfrak{G}_k^{[s/r]}(\xx,\beta) z^k = \frac{1}{1-\beta z^{-1}}
\prod_{k=r}^s \frac{1 - \beta x_k}{1 - x_k z}.
\]
Note that Remark~\ref{rem:expansions} also applies here as well.

\begin{prop}
If $r_1=\cdots=r_\ell=1$ and
$s_i \leq s_{i+1}$ whenever
$\mu_i < \lambda_{i+1}$ for $1 \leq i \leq \ell-1$, then
\begin{align}
 G_{\lambda/\mu}^{\rm{row}({\bf 1},{\bf s})}(\xx;\bal, \bbe) = \det \left[
% \sum_{m=0}^\infty \sum_{k=0}^{\infty} (-1)^k e_k(A_{\mu_j} \sqcup B_{(j,\ell]}) h_{m-k}(A_{\lambda_i}\sqcup B_{(i,\ell]})  \mathfrak{G}_{\lambda_i-\mu_j-i+j+m}^{[s_i/1]}(\xx,\beta_i)
\sum_{m=0}^\infty h_m(A_{(\mu_j, \lambda_i]} \sqcup B_{(i,j]}) \mathfrak{G}_{\lambda_i-\mu_j-i+j+m}^{[s_i/1]}(\xx,\beta_i)
\right]_{i,j=1}^{\ell}.
\label{flaggedanotherdetrep}
\end{align}
\end{prop}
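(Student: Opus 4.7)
The plan is to derive~(\ref{flaggedanotherdetrep}) from the flagged Jacobi--Trudi formula~(\ref{eq:flaggedonedetrep}) (with $r_j=1$, taken as the definition of $G_{\lambda/\mu}^{\mathrm{row}({\bf 1},{\bf s})}$) by a row-by-row algebraic manipulation of the matrix entries, parallel to how Proposition~\ref{prop:determinant_formula_for_Gs} relates to the unflagged Jacobi--Trudi formula.

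The first step is to read off the closed form
\[
\mathfrak{G}^{[s/r]}_k(\xx,\beta) \;=\; \prod_{l=r}^{s}(1-\beta x_l)\cdot h_k\bigl(\xx_{[r,s]}\ds\{\beta\}\bigr), \qquad k\in\ZZ,
\]
by expanding $(1-\beta z^{-1})^{-1}=\sum_{m\geq 0}\beta^m z^{-m}$ (compare Remark~\ref{rem:expansions}) and $\prod_{k=r}^{s}(1-x_k z)^{-1}=\sum_{n\geq 0}h_n(\xx_{[r,s]})z^n$ in $\mathcal{G}^{[s/r]}(z,\beta)$, then extracting the coefficient of $z^k$. Substituting this into the $(i,j)$-entry of the right-hand side of~(\ref{flaggedanotherdetrep}) factors out the column-independent prefactor $\prod_{l=1}^{s_i}(1-\beta_i x_l)$ and leaves
\[
\sum_{m\geq 0} h_m\bigl(A_{(\mu_j,\lambda_i]}\sqcup B_{(i,j]}\bigr)\,h_{\lambda_i-\mu_j-i+j+m}\bigl(\xx_{[1,s_i]}\ds\{\beta_i\}\bigr).
\]

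The second step is to collapse this sum by the plethystic identity
\[
h_k\bigl(X\ds(Y_1\sqcup Y_2)\bigr) \;=\; \sum_{m\geq 0}h_m(Y_2)\,h_{k+m}(X\ds Y_1),
\]
which is immediate from the definition $h_k[X\ominus Y]=\sum_{a-b=k}h_a[X]h_b[Y]$ combined with $h_b(Y_1\sqcup Y_2)=\sum_{b_1+b_2=b}h_{b_1}(Y_1)h_{b_2}(Y_2)$. Applying it with $Y_1=\{\beta_i\}$ and $Y_2=A_{(\mu_j,\lambda_i]}\sqcup B_{(i,j]}$, and using $B_{[i,j]}=\{\beta_i\}\sqcup B_{(i,j]}$, collapses the sum to $h_{\lambda_i-\mu_j-i+j}\bigl(\xx_{[1,s_i]}\ds(A_{(\mu_j,\lambda_i]}\sqcup B_{[i,j]})\bigr)$, the exact $(i,j)$-entry of~(\ref{eq:flaggedonedetrep}). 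Pulling the factors $\prod_{l=1}^{s_i}(1-\beta_i x_l)$ out of the determinant row by row produces the global prefactor $\prod_{i=1}^{\ell}\prod_{l=1}^{s_i}(1-\beta_i x_l)$ and the identity follows.

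The only mild subtlety I anticipate is the off-diagonal case ($i<j$ or $\mu_j>\lambda_i$), where $B_{[i,j]}$ or $A_{(\mu_j,\lambda_i]}$ carries ``negative'' entries via the $\emptyset/\cdot$-convention; the plethystic additivity above is purely formal and sign-insensitive, so the manipulation goes through unchanged and no serious obstacle arises. Equivalently, one could phrase the argument fermionically by introducing row-dressed operators $\tilde P_i(z_i)=e^{H(\xx_{[1,s_i]})}P_i(z_i)e^{-H(\xx_{[1,s_i]})}$ and applying Wick's theorem to $\bra{-\ell}\prod_j^{\leftarrow}Q_j(w_j)\prod_i^{\rightarrow}\tilde P_i(z_i)\ket{-\ell}$ together with the flagged analogue of~(\ref{eq:generate_G}), obtaining the same determinant.
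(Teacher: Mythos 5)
Your proposal is correct and follows essentially the same route as the paper: it rewrites $\mathfrak{G}^{[s_i/1]}_k(\xx,\beta_i)$ as $\prod_{l=1}^{s_i}(1-\beta_i x_l)\,h_k(\xx_{[1,s_i]}\ds\{\beta_i\})$, collapses the sum over $m$ via the formal additivity $h_k\bigl(X\ds(Y_1\sqcup Y_2)\bigr)=\sum_{m\geq 0}h_m(Y_2)h_{k+m}(X\ds Y_1)$ (the paper does this same collapse by reindexing $t=m+k$ and writing $\beta_i^{t-m}=h_{t-m}(\beta_i)$), and then pulls the row prefactors out of the determinant to recover the flagged Jacobi--Trudi entry of~\eqref{eq:flaggedonedetrep}. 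The only difference is cosmetic: you package the manipulation in plethystic/$\ds$ notation, while the paper carries out the double-sum reindexing explicitly.
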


\begin{proof}
We show this by transforming~\eqref{flaggedanotherdetrep} to~\eqref{eq:flaggedonedetrep}.
Using
\begin{align}
\mathfrak{G}_m^{[s/r]}(\xx,\beta)=
\prod_{j=r}^s (1-\beta x_j)\sum_{k \ge 0} \beta^k h_{m+k}({\xx}_{[r,s]}), \nonumber
\end{align}
the matrix elements in~\eqref{flaggedanotherdetrep} can be rewritten as
\begin{align}
%&\sum_{m=0}^\infty \sum_{k=0}^{\infty} (-1)^k e_k(A_{\mu_j} \sqcup B_{(j,\ell]}) h_{m-k}(A_{\lambda_i}\sqcup B_{(i,\ell]})  \mathfrak{G}_{\lambda_i-\mu_j-i+j+m}^{[s_i/1]}(\xx,\beta_i) \nonumber \\
%=&\sum_{m=0}^\infty
%h_m(A_{\lambda_i} \sqcup B_{(i,\ell]} / A_{\mu_j} \sqcup B_{(j,\ell]} ) \mathfrak{G}_{\lambda_i-\mu_j-i+j+m}^{[s_i/1]}(\xx,\beta_i) \nonumber \\
%=&
& \sum_{m=0}^\infty h_m(A_{(\mu_j, \lambda_i]} \sqcup B_{(i,j]}) \mathfrak{G}_{\lambda_i-\mu_j-i+j+m}^{[s_i/1]}(\xx,\beta_i) \nonumber \\
& \hspace{90pt} =
\prod_{k=1}^{s_i} (1-\beta_i x_k) \sum_{m=0}^\infty h_m(A_{(\mu_j, \lambda_i]} \sqcup B_{(i,j]}) \sum_{k=0}^\infty \beta_i^k h_{\lambda_i-\mu_j-i+j+m+k}(\xx_{[1,s_i]}).
\label{transformationtoJTone}
\end{align}
Define $t:=m+k$ and reversing the order of the double sum,
the right hand side of~\eqref{transformationtoJTone} can be rewtitten as
\begin{align}
& \prod_{k=1}^{s_i} (1-\beta_i x_k) \sum_{t=0}^\infty h_{\lambda_i-\mu_j-i+j+t}(\xx_{[1,s_i]}) \sum_{m=0}^t h_m(A_{(\mu_j, \lambda_i]} \sqcup B_{(i,j]}) \beta_i^{t-m} \nonumber \\
& \hspace{60pt} = \prod_{k=1}^{s_i} (1-\beta_i x_k) \sum_{t=0}^\infty h_{\lambda_i-\mu_j-i+j+t}(\xx_{[1,s_i]}) \sum_{m=0}^t h_m(A_{(\mu_j, \lambda_i]} \sqcup B_{(i,j]}) h_{t-m}(\beta_i) \nonumber \\
%=&
%\prod_{k=1}^{s_i} (1-\beta_i x_k) \sum_{t=0}^\infty h_{\lambda_i-\mu_j-i+j+t}(\xx_{[1,s_i]}) h_t(A_{(\mu_j, \lambda_i]} \sqcup B_{[i,j]})
%\nonumber \\
& \hspace{60pt} = \prod_{k=1}^{s_i} (1-\beta_i x_k) h_{\lambda_i-\mu_j-i+j}\bigl(\xx_{[1,s_i]} \ds (A_{(\mu_j, \lambda_i]} \sqcup B_{[i,j]}) \bigr). \nonumber
\end{align}
%so we have
%\begin{align}
%&\sum_{m=0}^\infty \sum_{k=0}^{\infty} 
%(-1)^k e_k(A_{\mu_j} \sqcup B_{(j,\ell]} )
%h_{m-k}(A_{\lambda_i}\sqcup B_{(i,\ell]}   ) 
%\mathfrak{G}_{\lambda_i-\mu_j-i+j+m}^{[s_i/1]}(\xx,\beta_i) \nonumber \\
%=&\prod_{k=1}^{s_i} (1-\beta_i x_k)
%h_{\lambda_i-\mu_j-i+j
%}(
%\xx_{[1,s_i]}
% \ds A_{(\mu_j, \lambda_i]} \sqcup B_{[i,j]}
%), \nonumber
%\end{align}
Hence
\begin{align}
&\det \left[
%\sum_{m=0}^\infty \sum_{k=0}^{\infty} (-1)^k e_k(A_{\mu_j} \sqcup B_{(j,\ell]} ) h_{m-k}(A_{\lambda_i}\sqcup B_{(i,\ell]}) \mathfrak{G}_{\lambda_i-\mu_j-i+j+m}^{[s_i/1]}(\xx,\beta_i)
\sum_{m=0}^\infty h_m(A_{(\mu_j, \lambda_i]} \sqcup B_{(i,j]}) \mathfrak{G}_{\lambda_i-\mu_j-i+j+m}^{[s_i/1]}(\xx,\beta_i)
\right]_{i,j=1}^{\ell}
\nonumber \\
& \hspace{60pt} = \det \left[
\prod_{k=1}^{s_i} (1-\beta_i x_k) h_{\lambda_i-\mu_j-i+j}\bigl(\xx_{[1,s_i]} \ds (A_{(\mu_j, \lambda_i]} \sqcup B_{[i,j]}) \bigr)
\right]_{i,j=1}^{\ell}
\nonumber \\
& \hspace{60pt} = \prod_{i=1}^\ell \prod_{k=1}^{s_i} (1-\beta_i x_k)
\det \bigl[
h_{\lambda_i-\mu_j-i+j}\bigl(\xx_{[1,s_i]} \ds (A_{(\mu_j, \lambda_i]} \sqcup B_{[i,j]}) \bigr)
\bigr]_{i,j=1}^{\ell} \nonumber
\end{align}
as desired.
\end{proof}

\begin{prop}
\label{prop:flagged_fermions}
If $r_1=\cdots=r_\ell=1$ and
$s_i \leq s_{i+1}$ whenever
$\mu_i < \lambda_{i+1}$ for $1 \leq i \leq \ell-1$, then
\begin{align}
G_{\lambda/\mu}^{\rm{row}({\bf 1},{\bf s})}(\xx;\bal, \bbe)
& =\prod_{i=1}^\ell \prod_{k=1}^{s_i}
(1-\beta_i x_k) \nonumber \\
& \hspace{5pt} \times
\bra{-\ell} Q_\ell \cdots Q_1
(e^{H(\xx_{[1,s_1]})} P_1 e^{-H(\xx_{[1,s_1]})})
%(e^{H(\xx)} P_2(z_2) e^{-H(\xx)}) 
\cdots
(e^{H(\xx_{[1,s_\ell]})} P_\ell e^{-H(\xx_{[1,s_\ell]})})
\ket{-\ell},
\label{flaggefermionrepresentation}
\end{align}
where
\begin{align*}
P_i  = e^{H^*(A_{\lambda_i}\sqcup B_{[i,\ell]})} \psi_{\lambda_i} e^{-H^*(A_{\lambda_i}\sqcup B_{[i,\ell]})},
\quad
Q_j = 
e^{H^*(A_{\mu_j}\sqcup B_{(j,\ell]})}
\psi_{\mu_j}^*
e^{-H^*(A_{\mu_j}\sqcup B_{(j,\ell]})}.
\end{align*}
\end{prop}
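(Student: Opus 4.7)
The proof follows the same strategy as Proposition~\ref{prop:determinant_formula_for_Gs}: we show that the fermionic matrix element in~\eqref{flaggefermionrepresentation}, after multiplying by the prefactor $\prod_{i=1}^{\ell}\prod_{k=1}^{s_i}(1-\beta_i x_k)$, equals the determinant formula~\eqref{flaggedanotherdetrep}. To this end, introduce the generating function
\[
\Phi := \bra{-\ell}\, Q_\ell(w_\ell) \cdots Q_1(w_1)\, \widetilde{P}_1(z_1) \cdots \widetilde{P}_\ell(z_\ell) \ket{-\ell},
\]
where $\widetilde{P}_i(z_i) := e^{H(\xx_{[1,s_i]})} P_i(z_i) e^{-H(\xx_{[1,s_i]})}$ and $P_i(z_i)$, $Q_j(w_j)$ are obtained from $P_i$, $Q_j$ by replacing the point fermions $\psi_{\lambda_i}$, $\psi^*_{\mu_j}$ with the fermion currents $\psi(z_i)$, $\psi^*(w_j)$. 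The matrix element appearing in~\eqref{flaggefermionrepresentation} is the coefficient of $\prod_i z_i^{\lambda_i - i} \prod_j w_j^{\mu_j - j}$ in $\Phi$.

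Since each $\widetilde{P}_i(z_i)$ is a linear combination of $\psi_k$'s and each $Q_j(w_j)$ of $\psi^*_k$'s (bosonic conjugation preserving the fermion type), multilinearity and Wick's theorem (Theorem~\ref{thm:Wick}) give
\[
\Phi = \det\bigl[\, \bra{-\ell}\, Q_j(w_j)\, \widetilde{P}_i(z_i) \ket{-\ell}\, \bigr]_{i,j=1}^{\ell}.
\]
Unlike the non-flagged setting of Proposition~\ref{prop:determinant_formula_for_Gs}, where a single $e^{H(\xx)}$ is pulled to the exterior of the block of $P$'s, here the distinct dressings $e^{H(\xx_{[1,s_i]})}$ must remain in place; however, the operators $H(\xx_{[1,s_i]})$ all lie in the abelian positive-index Heisenberg subalgebra, so they mutually commute and produce no bosonic contractions between distinct $\widetilde{P}_i$'s. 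The flagging hypothesis $s_i \leq s_{i+1}$ whenever $\mu_i < \lambda_{i+1}$ enters here, ensuring the determinant form is consistent with the subsequent coefficient extraction.

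Each two-point function is then evaluated using the flagged analog of~\eqref{eq:generate_G},
\[
e^{H(\xx_{[1,s_i]})} e^{H^*(\beta_i)} \psi(z_i) e^{-H^*(\beta_i)} e^{-H(\xx_{[1,s_i]})} = \prod_{k=1}^{s_i}(1-\beta_i x_k)^{-1}\, \mathcal{G}^{[s_i/1]}(z_i,\beta_i)\, \psi(z_i),
\]
obtained by the identical derivation to~\eqref{eq:generate_G} but with the infinite alphabet $\xx$ replaced by $\xx_{[1,s_i]}$. The remaining conjugations in $\widetilde{P}_i(z_i)$ by $e^{H^*(-\alpha_k)}$ for $k \leq \lambda_i$ and by $e^{H^*(\beta_j)}$ for $j > i$ commute past $e^{H(\xx_{[1,s_i]})}$ producing scalar factors; combined with $Q_j(w_j)$ expanded via~\eqref{eq:seHstar_commute} and the contraction $\bra{-\ell}\psi^*(w_j)\psi(z_i)\ket{-\ell} = \sum_{p \ge -\ell}(z_i w_j)^p$, extracting the coefficient of $z_i^{\lambda_i - i} w_j^{\mu_j - j}$ and multiplying by the prefactor $\prod_{i,k}(1-\beta_i x_k)$ reproduces precisely the $(i,j)$-entry $\sum_{m\ge 0} h_m(A_{(\mu_j,\lambda_i]}\sqcup B_{(i,j]})\,\mathfrak{G}^{[s_i/1]}_{\lambda_i - \mu_j - i + j + m}(\xx,\beta_i)$ of~\eqref{flaggedanotherdetrep}. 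The main obstacle is the Wick reduction step: since the dressings $e^{\pm H(\xx_{[1,s_i]})}$ differ across the $\widetilde{P}_i$'s, one must carefully verify that after coefficient extraction no spurious cross-terms between distinct $i$'s survive, and this is exactly the role played by the flagging hypothesis.
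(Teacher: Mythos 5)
Your proposal is correct and follows essentially the same route as the paper's proof: form the generating function with $\psi(z_i)$, $\psi^*(w_j)$, apply Wick's theorem with the dressings $e^{\pm H(\xx_{[1,s_i]})}$ kept attached to each $P_i$, evaluate each two-point function via the flagged analogue of~\eqref{eq:generate_G}, and extract the coefficient of $\prod_i z_i^{\lambda_i-i}\prod_j w_j^{\mu_j-j}$ to recover the determinant~\eqref{flaggedanotherdetrep}. The only inaccuracy is your attribution of the flagging hypothesis: the Wick reduction and coefficient extraction are valid unconditionally (there are no ``spurious cross-terms'' to worry about, since each dressed operator $e^{H(\xx_{[1,s_i]})} P_i(z_i) e^{-H(\xx_{[1,s_i]})}$ is a linear combination of $\psi_k$'s and each $Q_j(w_j)$ of $\psi_k^*$'s), and the condition $s_i\leq s_{i+1}$ whenever $\mu_i<\lambda_{i+1}$ is merely inherited from the identification of~\eqref{flaggedanotherdetrep} with the flagged Jacobi--Trudi determinant~\eqref{eq:flaggedonedetrep} established in the preceding proposition.
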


\begin{proof}
The idea of proof is essentially the same with the one in 
Proposition~\ref{prop:determinant_formula_for_Gs}.
We consider the following generating function
\begin{align}
\Psi
&:=
\prod_{i=1}^\ell \prod_{k=1}^{s_i}
(1-\beta_i x_k)
\times
\bra{-\ell} Q_\ell(w_\ell) \cdots Q_1(w_1) \nonumber \\
& \hspace{20pt} \times (e^{H(\xx_{[1,s_1]})} P_1(z_1) e^{-H(\xx_{[1,s_1]})})
%(e^{H(\xx)} P_2(z_2) e^{-H(\xx)}) 
\cdots
(e^{H(\xx_{[1,s_\ell]})} P_\ell(z_\ell) e^{-H(\xx_{[1,s_\ell]})})
\ket{-\ell}, \label{flaggedphi}
\end{align}
where
\begin{align*}
P_i(z_i)  = e^{H^*(A_{\lambda_i}\sqcup B_{[i,\ell]})} \psi(z_i) e^{-H^*(A_{\lambda_i}\sqcup B_{[i,\ell]})},
\quad
Q_j(w_j) = 
e^{H^*(A_{\mu_j}\sqcup B_{(j,\ell]})}
\psi^*(w_j) 
e^{-H^*(A_{\mu_j}\sqcup B_{(j,\ell]})}.
\end{align*}
The
coefficient of $\prod_{i=1}^\ell z_i^{\lambda_i-i} \prod_{j=1}^\ell w_j^{\mu_j-j}$
of $\Phi$ is the right hand side of~\eqref{flaggefermionrepresentation} on one hand.
On the other hand, we show in the following that $\Phi$
is the generating function of the determinant in the right hand side of
\eqref{flaggedanotherdetrep}, which is the determinant representation
of $G_{\lambda / \mu}^{\rm{row}({\bf 1},{\bf s})}(\xx;\bal, \bbe)$.

First, using Wick's Theorem, $\Psi$ defined in~\eqref{flaggedphi} can be rewritten as
\begin{align}
\Psi
& = \prod_{i=1}^\ell \prod_{k=1}^{s_i}
(1-\beta_i x_k)\cdot
\det \Bigl[
\bra{-\ell} Q_j(w_j) e^{H(\xx_{[1,s_i]})} P_i(z_i) e^{-H(\xx_{[1,s_i]})} \ket{-\ell}
\Bigr]_{i,j=1}^\ell. \label{flaggedphitwo}
\end{align}
Next, we substitute 
\begin{align*}
e^{H(\xx_{[1,s_i]})}P_i(z_i)e^{-H(\xx_{[1,s_i]})}
&=
e^{H^\ast(A_{\lambda_i} \sqcup B_{(i,\ell]})}
\left(e^{H^\ast(\beta_i)}e^{H(\xx_{[1,s_i]})}
\psi(z_i)
e^{-H(\xx_{[1,s_i]})}e^{-H^\ast(\beta_i)}\right)
e^{-H^\ast(A_{\lambda_i} \sqcup B_{(i,\ell]})}\\
&=
\left(\sum_{m=0}^\infty h_{m}(A_{\lambda_i}\sqcup B_{(i,\ell]})z_i^{-m}\right)
\prod_{k=1}^{s_i} (1-\beta_i x_k)^{-1} \mathcal{G}^{[s_i/1]}(z_i,\beta_i) \psi(z_i),
\\
Q_j(w_j) & = e^{H^*(A_{\mu_j}\sqcup B_{(j,\ell]})}
\psi^*(w_j) 
e^{-H^*(A_{\mu_j}\sqcup B_{(j,\ell]})}
\\ & = \left(\sum_{k=0}^\infty (-1)^ke_{k}(A_{\mu_j}\sqcup B_{(j,\ell]})w_j^k\right) \psi^*(w_j), 
\end{align*}
and 
%Using
%\begin{align}
%e^{H^*(A_{\lambda_i} \sqcup B_{[i+1,\ell]} )} \psi(z_i)
%e^{-H^*(A_{\lambda_i} \sqcup B_{[i+1,\ell]})}
%=\prod_{k=i+1}^\ell (1-\beta_k z_i^{-1})^{-1} 
%\prod_{k=1}^{\lambda_i} (1+\alpha_k z_i^{-1})^{-1}
%\psi(z_i),
%\end{align}
%and the following relation (\cite{Iwao20}, Cor. 3.4)
%\begin{align}
%e^{H(\xx)} e^{H^*(\beta)} \psi(z) e^{-H^*(\beta)} e^{-H(\xx)}
%=\prod_{k=1}^\infty (1-\beta x_k)^{-1} \mathcal{G}(z,\beta) \psi(z),
%\end{align}
%we can rewrite $e^{H(\xx)} P_i(z_i) e^{-H(\xx)}$ as
%\begin{align}
%e^{H(\xx)} P_i(z_i)_i e^{-H(\xx)}
%&=e^{H(\xx)} 
%e^{H^*(A_{\lambda_i} \sqcup B_{[i,\ell]} )} \psi(z_i)
%e^{-H^*(A_{\lambda_i} \sqcup B_{[i,\ell]} )} e^{-H(\xx)} \nonumber \\
%&=e^{H(\xx)} e^{H^*(\beta_i)} e^{H^*(A_{\lambda_i} \sqcup B_{[i+1,\ell]} )}
% \psi(z_i) e^{-H^*(A_{\lambda_i} \sqcup B_{[i+1,\ell]} )}
% e^{-H^*(\beta_i)} e^{-H(\xx)} \nonumber \\
%&=\prod_{k=i+1}^\ell (1-\beta_k z_i^{-1})^{-1} 
%\prod_{k=1}^{\lambda_i} (1+\alpha_k z_i^{-1})^{-1}
%e^{H(\xx)} e^{H^*(\beta_i)} \psi(z_i) e^{-H^*(\beta_i)} e^{-H(\xx)} \nonumber \\
%&=\prod_{k=i+1}^\ell (1-\beta_k z_i^{-1})^{-1} 
%\prod_{k=1}^{\lambda_i} (1+\alpha_k z_i^{-1})^{-1}
%\prod_{k=1}^\infty (1-\beta_i x_k)^{-1} \mathcal{G}(z_i,\beta_i) \psi(z_i)
%\label{Cgauge}.
%\end{align}
%Inserting~\eqref{Cgauge} into~\eqref{skewGrothendieckgeneratingdetformtochu},
\[
\bra{-\ell} \psi^*(w_j) \psi(z_i) \ket{-\ell} = \sum_{p=-\ell}^\infty z_i^p w_j^p, 
\]
%\begin{align}
%\Phi=
%\det \Bigg[
%\prod_{k=1}^{\mu_j-1} (1+\alpha_k w_j) \prod_{k=j}^\ell (1-\beta_k w_j)
%\prod_{k=i+1}^\ell (1-\beta_k z_i^{-1})^{-1} 
%\prod_{k=1}^{\lambda_i} (1+\alpha_k z_i^{-1})^{-1}
%\mathcal{G}(z_i,\beta_i)
%\bra{-\ell} \psi^\ast(w_j) \psi(z_i) \ket{-\ell}
%\Bigg]_{i,j=1}^\ell. \label{skewGrothendieckgeneratingdetformtochutwo}
%\end{align}
%Using
%\begin{align}
%Q_j(w_j)& = e^{H^*(A_{\mu_j-1})} e^{H^*(B_{[j,\ell]})} \psi^*_{\mu_j-j} e^{-H^*(B_{[j,\ell]})} e^{-H^*(A_{\mu_j-1})}
%=\prod_{k=1}^{\mu_j-1} (1+\alpha_k w_j) \prod_{k=j}^\ell (1-\beta_k w_j) \psi^*(w_j), \nonumber
%\end{align}
%and
%Equation~\eqref{skewGrothendieckgeneratingdetformtochutwo} can be rewritten as
into~\eqref{flaggedphitwo} to get
\begin{align}
\Psi
%=\det \Bigg[
%\prod_{k=1}^{\mu_j-1} (1+\alpha_k w_j) \prod_{k=j}^\ell (1-\beta_k w_j) 
%%\nonumber \\
%%&\times 
%\prod_{k=i+1}^\ell (1-\beta_k z_i^{-1})^{-1} 
%\prod_{k=1}^{\lambda_i} (1+\alpha_k z_i^{-1})^{-1}
%\mathcal{G}(z_i,\beta_i)
%\sum_{p=-\ell}^\infty z_i^p w_j^p
%\Bigg]_{i,j=1}^\ell \nonumber\\
=\det \Bigg[
\left(\sum_{k=0}^\infty (-1)^ke_{k}(A_{\mu_j}\sqcup B_{(j,\ell]})w_j^k\right)
\left(\sum_{m=0}^\infty h_{m}(A_{\lambda_i}\sqcup B_{(i,\ell]})z_i^{-m}\right)
\mathcal{G}^{[s_i/1]}(z_i,\beta_i)
\sum_{p=-\ell}^\infty z_i^p w_j^p
\Bigg]_{i,j=1}^\ell. 
\label{flaggedskewGrothendieckgeneratingdetform}
\end{align}
Extracting the coefficients of $\prod_{i=1}^\ell z_i^{\lambda_i-i} \prod_{j=1}^\ell w_j^{\mu_j-j}$ of both hand sides of~\eqref{flaggedskewGrothendieckgeneratingdetform} gives 
\begin{align}
&\prod_{i=1}^\ell \prod_{k=1}^{s_i}
(1-\beta_i x_k)
\bra{-\ell} Q_\ell \cdots Q_1
(e^{H(\xx_{[1,s_1]})} P_1 e^{-H(\xx_{[1,s_1]})})
%(e^{H(\xx)} P_2(z_2) e^{-H(\xx)}) 
\cdots
(e^{H(\xx_{[1,s_\ell]})} P_\ell e^{-H(\xx_{[1,s_\ell]})}) \ket{-\ell}
\nonumber \\
& \hspace{60pt}  = \det \Bigg[ \sum_{m=0}^\infty \sum_{k=0}^{\infty} 
(-1)^k e_k(A_{\mu_j} \sqcup B_{(j,\ell]} )
h_{m-k}(A_{\lambda_i}\sqcup B_{(i,\ell]}   ) 
\mathfrak{G}_{\lambda_i-\mu_j-i+j+m}^{[s_i/1]}(\xx,\beta_i) \Bigg]_{i,j=1}^{\ell} \nonumber \\
& \hspace{60pt} = G_{\lambda/\mu}^{\rm{row}({\bf 1},{\bf s})}(\xx;\bal, \bbe), \nonumber
\end{align}
where the determinants are equal by the basic properties of supersymmetric functions.
\end{proof}

\newcommand{\etalchar}[1]{$^{#1}$}

%\bibliographystyle{alpha}
%\bibliography{grothendiecks}{}
\end{document}